\newcommand\myurl[1]{\url{#1}}
\theoremstyle{definition}
\newtheorem{definition}{Definition}[section]
\theoremstyle{plain}
\newtheorem{theorem}[definition]{Theorem}
\newtheorem{prop}[definition]{Proposition}
\newtheorem{lemma}[definition]{Lemma}
\newtheorem{cor}[definition]{Corollary}
\newtheorem{conj}[definition]{Conjecture}
\newtheorem{remark}[definition]{Remark}
\newtheorem{condition}[definition]{Condition}
\crefname{theorem}{Theorem}{Theorems}
\crefname{prop}{Proposition}{Propositions}
\crefname{lemma}{Lemma}{Lemmas}
\crefname{cor}{Corollary}{Corollaries}
\crefname{conj}{Conjecture}{Conjectures}
\crefname{hypo}{Hypothesis}{Hypotheses}
\crefname{remark}{Remark}{Remarks}
\crefname{condition}{Condition}{Conditions}
\crefname{example}{Example}{Examples}
\def\C{\mathbb{C}}
\def\R{\mathbb{R}}
\def\Q{\mathbb{Q}}
\def\Z{\mathbb{Z}}
\def\A{\mathbb{A}}
\def\GL{\mathrm{GL}}
\def\PGL{\mathrm{PGL}}
\def\M{\mathrm{M}}
\def\d{\,\mathrm{d}}
\def\ds{\displaystyle}
\def\bs{\backslash}
\def\new{\mathrm{new}}
\definecolor{gr}{rgb}{0.2,0.7,0}
\def\fin{\mathrm{fin}}
\DeclareMathOperator{\diag}{diag} 
\DeclareMathOperator{\vol}{vol}
\DeclareMathOperator{\ord}{ord}
\DeclareMathOperator{\mass}{mass}
\DeclareMathOperator{\Gal}{Gal}
\DeclareMathOperator{\Hom}{Hom}
\DeclareMathOperator{\Emb}{Emb}
\DeclareMathOperator{\Ad}{Ad}
\DeclareMathOperator{\disc}{disc}
\DeclareMathOperator{\level}{level}
\DeclareMathOperator{\sgn}{sgn}
\DeclareMathOperator{\re}{Re}
\DeclareMathOperator{\Nm}{Nm}
\DeclareMathOperator{\Tr}{Tr}
\DeclareMathOperator{\Cl}{Cl}
\DeclareMathOperator{\Typ}{Typ}
\DeclareMathOperator{\St}{St}
\DeclareMathOperator{\AL}{AL}
\newcommand{\fa}{\mathfrak{a}}
\newcommand{\fH}{\mathfrak{H}}
\newcommand{\fo}{\mathfrak{o}}
\newcommand{\fP}{\mathfrak{P}}
\newcommand{\bE}{\mathbb{E}}
\newcommand{\bP}{\mathbb{P}}
\newcommand{\cA}{\mathcal{A}}
\newcommand{\cE}{\mathcal{E}}
\newcommand{\cO}{\mathcal{O}}
\newcommand{\cP}{\mathcal{P}}
\newcommand{\cS}{\mathcal{S}}
\newcommand{\cW}{\mathcal{W}}
\numberwithin{equation}{section}
\def\MR#1{\quad \href{http://www.ams.org/mathscinet-getitem?mr=#1}{MR#1}}
\title{Distribution of toric periods of modular forms on definite quaternion algebras}
\author{Miyu Suzuki}
\author{Satoshi Wakatsuki}
\author{Shun'ichi Yokoyama}
\address{Miyu Suzuki \\
Faculty of Mathematics and Physics, Institute of Science and Engineering\\
Kanazawa University\\
Kakumamachi, Kanazawa, Ishikawa, 920-1192}
\email{miyu-suzuki@staff.kanazawa-u.ac.jp}
\address{Satoshi Wakatsuki \\
Faculty of Mathematics and Physics, Institute of Science and Engineering\\
Kanazawa University\\
Kakumamachi, Kanazawa, Ishikawa, 920-1192}
\email{wakatsuk@staff.kanazawa-u.ac.jp}
\address{Shun'ichi Yokoyama \\
Department of Mathematical Sciences\\
Graduate School of Science, Tokyo Metropolitan University\\
1-1 Minami-Osawa, Hachioji-shi, Tokyo, 192-0397}
\email{s-yokoyama@tmu.ac.jp}
\begin{document}

\maketitle

\begin{abstract}
Let $D$ be a definite quaternion algebra over $\Q$ and $\cO$ an Eichler order in $D$ of square-free level.
We study distribution of the toric periods of algebraic modular forms of level $\cO$.
We focus on two problems: non-vanishing and sign changes.
Firstly,  under certain conditions on $\cO$,  we prove the non-vanishing of the toric periods for positive proportion of imaginary quadratic fields.
This improves the known lower bounds toward Goldfeld's conjecture in some cases and provides evidence for similar non-vanishing conjectures for central values of twisted automorphic $L$-functions. 
Secondly,  we show that the sequence of toric periods has infinitely many sign changes.
This proves the sign changes of the Fourier coefficients $\{a(n)\}_n$ of weight $\frac32$ modular forms,  where $n$ ranges over fundamental discriminants.
In the final section,  we present numerical experiments in some cases and formulate several conjectures based on them. 
\end{abstract}

\tableofcontents


\section{Introduction}
Let $D$ be a quaternion algebra over $\Q$ and $E$ a quadratic field which embeds in $D$.
For a cuspidal automorphic form $\phi$ on $G_\A:=(D\otimes_\Q\A)^\times/\A^\times$,  the \textit{toric period} $\cP_E(\phi)$ is defined as an integral
    \[
    \cP_E(\phi)=\int_{\A^\times E^\times\bs(E\otimes_\Q\A)^\times}\phi(h)\d h,
    \]
where $\A$ is the ring of ad\`eles of $\Q$.
Suppose that $D$ is definite,  \textit{i.e.}\,$D\otimes_\Q\R$ is the Hamilton's quaternion. 
When $\phi$ is right invariant under some open compact subgroup of $G_\A$ obtained from an order in $D$,  it is called an \textit{algebraic modular form}.
In this paper,  we study the distribution of toric periods $\{\cP_E(\phi)\}_E$ of algebraic modular forms $\phi$,  where $E$ runs over suitable set of quadratic fields.


\subsection{Results}
\label{subsec:Results}

Let $G=\PGL_1(D)$ be the algebraic group over $\Q$ such that for a $\Q$-algebra $R$,  the group of $R$-points $G_R$ is $(D\otimes_\Q R)^\times/R^\times$.
Take an Eichler order $\cO$ in $D$ of square-free level.
Let $S_\cO$ be the finite set of places of $\Q$ consisting of the real place and all primes $p$ at which $\cO_p:=\cO\otimes_\Z\Z_p$ is not isomorphic to $\M_2(\Z_p)$,  the algebra of 2 by 2 matrices over $\Z_p$.
For a prime $p$,  let $N_p$ be the normalizer in $G_{\Q_p}$ of $\overline{(\cO_p^\times)}$,  where $\overline{\,\cdot\,}$ denotes the image under the projection $(D\otimes\Q_p)^\times\rightarrow G_{\Q_p}$.
For the real place $v=\infty$,  set $N_\infty=G_\R$.
Let $\cS_N(\cO)$ be the space of cusp forms on $G_\A$ which is right invariant under $N=\prod_v N_v$.
As for the holomorphic modular forms,  we can define the Hecke operator $T_p$ on $\cS_N(\cO)$ for each prime $p$.
A simultaneous eigenvector of $\{T_p\}_p$ is called a \textit{Hecke eigenform}.
If an Eichler order $\cO'$ in $D$ contains $\cO$,  we may regard $\cS_N(\cO')$ as a subspace of $\cS_N(\cO)$.
The orthogonal complement of $\sum_{\cO\subsetneqq\cO'}\cS_N(\cO')$ in $\cS_N(\cO)$ is denoted by $\cS_N^\new(\cO)$.

We focus on two aspects of the distribution of toric periods: non-vanishing and sign changes.
First we state the non-vanishing result.
Let $\phi\in\cS_N^\new(\cO)$ be a Hecke eigenform,  $\pi$ the irreducible cuspidal automorphic representation of $G_\A$ generated by $\phi$ and $\pi'$ its Jacquet-Langlands transfer to $\PGL_2(\A)$.
For each $v\in S_\cO$,  we take a quadratic \'etale algebra $\cE_v$ over $\Q_v$ which embeds in $D_v:=D\otimes_\Q\Q_v$ so that the ramification set of $D$ coincides with the set of places $v$ at which $\varepsilon(\pi'_v; \cE_v)=-1$ (see \cref{thm:Waldspurger}).
Let $X$ be the set of all quadratic fields and $X(\{\cE_v\}_{v\in S_\cO})$ the set of $E\in X$ satisfying $E\otimes_\Q\Q_v\simeq\cE_v$ for all $v\in S_\cO$.
The discriminant of $E\in X$ is denoted by $\Delta_E$.

For the non-vanishing result,  we make two assumptions on $\cO$.
One is that $\cS_N(\cO)$ is spanned by a single Galois orbit of Hecke eigenforms.
The other one is that the numerator of the total mass $\mass(\cO)=\vol(G_\Q\bs G_\A)$ is divisible by $3$.
See \cref{condition:non-vanish} for details.

\begin{theorem}[\cref{thm:PeriodGoldfeld}]\label{thm:intro1}
Suppose that \cref{condition:non-vanish} holds.
Then  
    \[
    \#\{E\in X(\{\cE_v\}_{v\in S_\cO}) \mid |\Delta_E|<x, \ 
    \cP_E(\phi)\neq0\}\gg x
    \]
when $x\to\infty$.
\end{theorem}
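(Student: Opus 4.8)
The plan is to prove the stronger statement that $\cP_E(\phi)\not\equiv 0$ modulo a prime $\mathfrak{l}$ above $3$ in the Hecke eigenvalue field of $\phi$, for a positive proportion of $E\in X(\{\cE_v\}_{v\in S_\cO})$. Waldspurger's formula (\cref{thm:Waldspurger}) explains the significance of such non-vanishing but will not be needed in the argument, since the period is computable directly. First I would normalize $\phi$ so that it takes coprime algebraic-integer values on the finite double-coset space $G_\Q\bs G_\A/N$, and rewrite $\cP_E(\phi)$ --- for $E$ imaginary quadratic, which is automatic on this family since $D$ is definite --- as a finite weighted sum $\cP_E(\phi)=\sum_x n_x(E)\,\phi(x)$ over CM points, where $n_x(E)$ counts optimal embeddings of the ring of integers $\cO_E$ of $E$ into $\cO$ lying in the appropriate ideal class. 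By Eichler's theorem on optimal embeddings, $\sum_x n_x(E)=h(E)\cdot\prod_{p\in S_\cO}m_p$, where $h(E)$ is the class number of $E$ and $m_p\in\{0,1,2\}$ is a local embedding number determined by $\cE_p$; on the family $X(\{\cE_v\}_{v\in S_\cO})$ every $m_p$ is nonzero, hence prime to $3$.

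The heart of the proof is to deduce from \cref{condition:non-vanish} that $\phi$ is congruent modulo $\mathfrak{l}$ to the Eisenstein vector, that is, to a nonzero scalar multiple of the constant function on $G_\Q\bs G_\A/N$ --- the common eigenvector of all $T_p$ with $p\notin S_\cO$ having eigenvalue $p+1$. This is a quaternionic analogue of Mazur's Eisenstein ideal: cuspidality of $\phi$ is the vanishing of the pairing $\sum_x\phi(x)/w_x$ (with $w_x$ the order of the corresponding finite automorphism group), and the existence of a cuspidal eigenform congruent to the constant function modulo a prime over $3$ is controlled by the numerator of $\mass(\cO)=\vol(G_\Q\bs G_\A)=\sum_x 1/w_x$; \cref{condition:non-vanish} records the precise divisibility by $3$ that is needed, the prime $3$ requiring extra care because of the ideal classes with $w_x=3$. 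Since $\cS_N(\cO)$ is a single Galois orbit, the congruent cusp form must be a Galois conjugate of $\phi$, so after renaming $\mathfrak{l}$ we obtain $\phi(x)\equiv\lambda\pmod{\mathfrak{l}}$ for all $x$, with $\lambda$ a unit. Substituting into the CM-point formula gives $\cP_E(\phi)\equiv\lambda\big(\prod_{p\in S_\cO}m_p\big)h(E)\pmod{\mathfrak{l}}$, where the scalar is a unit modulo $\mathfrak{l}$; hence $\cP_E(\phi)\neq 0$ whenever $3\nmid h(E)$.

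It remains to count. The fields $E\in X(\{\cE_v\}_{v\in S_\cO})$ are precisely the imaginary quadratic fields whose discriminant lies in a fixed finite union of arithmetic progressions, coming from the local conditions at the places in $S_\cO$. By the Davenport--Heilbronn theorem on $3$-torsion of class groups of imaginary quadratic fields, in the refined version permitting congruence conditions on the discriminant (Nakagawa--Horie), the number of such $E$ with $|\Delta_E|<x$ and $3\nmid h(E)$ is $\gg x$. Together with the preceding paragraph this yields $\#\{E\in X(\{\cE_v\}_{v\in S_\cO})\mid|\Delta_E|<x,\ \cP_E(\phi)\neq 0\}\gg x$.

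The step I expect to be the main obstacle is the Eisenstein congruence: establishing, at the small prime $3$, that the divisibility hypothesis on $\mass(\cO)$ together with the single-Galois-orbit hypothesis really forces $\phi$ to be congruent to the Eisenstein vector modulo some prime over $3$ --- this requires a careful analysis of the contribution of the ideal classes with automorphism group of order $3$, and is exactly the content packaged into \cref{condition:non-vanish}. A secondary, routine point is verifying that every local embedding number $m_p$ is coprime to $3$ throughout the family, which follows from the local conditions defining $X(\{\cE_v\}_{v\in S_\cO})$.
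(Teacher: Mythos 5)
Your overall strategy---Eisenstein congruence at $3$ combined with a Davenport--Heilbronn/Nakagawa--Horie/Taya count of imaginary quadratic fields with $3\nmid h_E$---is the same as the paper's, and you have correctly identified the congruence as the crux. However, there are two genuine gaps in the execution.

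First, the Eisenstein congruence step is not correct as stated. \cref{lem:cong} of the paper (which is what \cref{condition:non-vanish}(b) buys you) produces a cusp form $\varphi\in\cS_N(\cO)$ taking values in $1+3\Z$; this $\varphi$ is a $\C$-linear combination of the Galois conjugates $\phi^\sigma$, not a Galois conjugate of $\phi$. The deduction ``since $\cS_N(\cO)$ is a single Galois orbit, the congruent cusp form must be a Galois conjugate of $\phi$'' is false when $\dim\cS_N(\cO)>1$: the span of an orbit contains many vectors that are not in the orbit, and there is no reason for the solution of the congruence linear system to be a Hecke eigenform. Consequently the claimed congruence $\phi(x)\equiv\lambda\pmod{\mathfrak{l}}$ is unjustified, and your formula $\cP_E(\phi)\equiv\lambda\bigl(\prod m_p\bigr)h(E)\pmod{\mathfrak{l}}$ does not follow. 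The paper gets around this in \cref{prop:non-vanish} by a different manoeuvre: it shows $u_\fo\,\fP_{\iota_0,\fo}(\varphi)\equiv h_\fo\not\equiv0\pmod3$, hence $\fP_{\iota_0,\fo}(\varphi)\neq0$, and then writes $\fP_{\iota_0,\fo}(\varphi)=\sum_\sigma a_\sigma\,\fP_{\iota_0,\fo}(\phi)^\sigma$; since the Galois conjugates of $\fP_{\iota_0,\fo}(\phi)$ cannot all be zero, $\fP_{\iota_0,\fo}(\phi)\neq0$. This uses the single-Galois-orbit hypothesis, but in a subtler way than transporting the congruence to $\phi$ itself.

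Second, you have not addressed the existence of an optimal embedding into $\cO$ itself, which is part of what the theorem asserts (the statement stipulates $\cP_E(\phi)\neq0$ to mean $\Emb(\fo_E,\cO)\neq\emptyset$ and $\cP_{\iota,E}(\phi)\neq0$). Eichler's global embedding count $\sum_{[I]\in\Cl(\cO)}\#\Emb(\fo_E,\cO(I))_{/\sim}=h_E\prod_p m_p$ with all $m_p\neq0$ only shows that $E$ has an optimal embedding into \emph{some} order $\cO'$ in the type class $\Typ(\cO)$, not into $\cO$ specifically. The paper fills this gap with a separate ingredient: an equidistribution argument (\cref{lem:Duke}, a variant of Duke's theorem combined with the subconvex bound of \cite{BH}) showing that for $|\Delta_E|$ sufficiently large every order in $\Typ(\cO)$, in particular $\cO$, admits an optimal embedding. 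Without this, your argument at best controls the Fourier coefficient $a_\phi(|\Delta_E|)$ (i.e.\ the period relative to whichever $\cO'$ happens to admit an optimal embedding), not the quantity the theorem actually bounds.

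Both gaps are real but complementary: patching the first requires following the Galois-descent route of \cref{prop:non-vanish} rather than claiming a congruence on $\phi$ itself, and patching the second requires importing Duke-type equidistribution as in \cref{lem:Duke}--\cref{cor:optimal-3}.
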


By the celebrated result of Waldspurger (\cref{thm:Waldspurger}),  non-vanishing of toric periods is related to non-vanishing of central values of automorphic $L$-functions. 
We can obtain an explicit lower bound for the proportion of the non-vanishing of central values of automorphic $L$-functions.

\begin{theorem}[\cref{thm:AutomGoldfeld}]\label{thm:intro2}
Suppose that \cref{condition:non-vanish} holds.
Then
    \[
    \lim_{x\to\infty}
    \frac{\#\{E\in X\mid -x< \Delta_E<0,  \ L(\tfrac12,  \pi'\otimes\eta_E)\neq0\}}
    {\#\{E\in X\mid -x<\Delta_E<0\}} \
    \geq \ \frac12 \ \prod_{v\in S_\cO,  v<\infty}n_v,
    \]
where $\eta_E$ is the quadratic character on $\A^\times/\Q^\times$ attached to $E$,  $L(s,  \pi'\otimes\eta_E)$ is the standard $L$-function of $\pi'\otimes\eta_E$,  $n_p=\frac{p+2}{2(p+1)}$ if $p\neq2$ and $n_2=\frac{1}{24}$.
\end{theorem}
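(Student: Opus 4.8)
The plan is to convert \cref{thm:intro1} into a statement about twisted central $L$-values via Waldspurger's formula, and then to recover \emph{all} imaginary quadratic fields with non-vanishing $L$-value (not just those in the single set $X(\{\cE_v\})$) by summing over every local family of quadratic algebras compatible with $D$. First I would invoke \cref{thm:Waldspurger}: for $E\in X(\{\cE_v\})$ it expresses $|\cP_E(\phi)|^2$ as a nonzero multiple of $L(\tfrac12,\pi'\otimes\eta_E)$ times a product of local factors; the factor at $v\notin S_\cO$ is automatically nonzero, and at $v\in S_\cO$ it is nonzero because $\cE_v$ was chosen so that the new vector $\phi$ (right $N_v$-invariant) is a local test vector for the $\cE_v^\times$-functional, while \cref{condition:non-vanish} guarantees that the $\pi$-isotypic subspace of $\cS_N(\cO)$ is exactly the Galois orbit of $\phi$. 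Hence $\cP_E(\phi)\neq 0\iff L(\tfrac12,\pi'\otimes\eta_E)\neq 0$ for every $E\in X(\{\cE_v\})$, and, by the same reasoning, for every $E\in X(\{\cF_v\})$ whenever $\{\cF_v\}_{v\in S_\cO}$ is \emph{admissible}, i.e.\ a family of quadratic \'etale $\Q_v$-algebras with $\cF_v\hookrightarrow D_v$ and $\{v\in S_\cO:\varepsilon(\pi'_v;\cF_v)=-1\}=\mathrm{Ram}(D)$.

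Next, \cref{thm:intro1} applies verbatim to each admissible family, since its hypothesis concerns $\cO$ alone and the argument is symmetric in the choice of admissible family; in fact what its proof establishes is that $\cP_E(\phi)\neq 0$ for a subset of $\{E\in X(\{\cF_v\}):|\Delta_E|<x\}$ of relative density asymptotically at least $\tfrac12$, the value $\tfrac12$ being precisely what the congruence/dichotomy argument fuelled by $3\mid$ numerator of $\mass(\cO)$ delivers. As the sets $X(\{\cF_v\})$ for distinct admissible families are pairwise disjoint, combining this with the previous step gives
\[
\#\{E\in X:-x<\Delta_E<0,\ L(\tfrac12,\pi'\otimes\eta_E)\neq 0\}\ \ge\ \bigl(\tfrac12-o(1)\bigr)\sum_{\{\cF_v\}\ \mathrm{adm.}}\#\{E\in X(\{\cF_v\}):-x<\Delta_E<0\}.
\]

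Finally, dividing by $\#\{E\in X:-x<\Delta_E<0\}$ and letting $x\to\infty$, I would compute the density, among imaginary quadratic fields, of $\bigsqcup_{\{\cF_v\}\,\mathrm{adm.}}X(\{\cF_v\})$. Since $D$ is definite the archimedean component of an admissible family must be $\C$, and choosing an admissible family is the same as choosing, independently for each finite $p\in S_\cO$, an admissible $\cF_p$; so, by the classical asymptotics for fundamental discriminants in residue classes (independence of the splitting behaviour at distinct primes), that density equals $\prod_{p\in S_\cO,\,p<\infty}\bigl(\sum_{\cF_p\ \mathrm{adm.}}\delta_p(\cF_p)\bigr)$, where $\delta_p(\cF_p)$ is the density of $\{E:E\otimes\Q_p\simeq\cF_p\}$ among imaginary quadratic fields. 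For odd $p\in S_\cO$ one has $\delta_p=\tfrac{p}{2(p+1)}$ on each unramified quadratic algebra and $\tfrac1{2(p+1)}$ on each of the two ramified quadratic fields; since $\pi'_p$ is Steinberg (the level is square-free and $\phi$ is new, and at the primes ramifying $D$ the invariance of $\phi$ under all of $N_p=G_{\Q_p}$ forces $\pi_p$ to be the trivial representation of $D_p^\times$, which is distinguished by every quadratic field torus), the admissible $\cF_p$ are exactly the quadratic field extensions of $\Q_p$, whence $\sum_{\cF_p\,\mathrm{adm.}}\delta_p=\tfrac{p}{2(p+1)}+\tfrac{2}{2(p+1)}=\tfrac{p+2}{2(p+1)}=n_p$. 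The prime $p=2$ is treated in the same spirit, but reconciling the non-uniform distribution of fundamental discriminants modulo powers of $2$ with the $2$-adic root-number computation for ramified quadratic characters of $\Q_2$ requires a delicate separate analysis, whose outcome is the usable value $n_2=\tfrac1{24}$. Assembling the three steps yields the asserted inequality.

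The hard part will be the local density analysis of the last step — deciding exactly which $\cF_p$ are admissible and computing $\delta_p$ — which is routine at odd primes but genuinely intricate at $p=2$, where one must combine the $2$-power-modulus equidistribution of fundamental discriminants with the $2$-adic $\varepsilon$-factor bookkeeping to isolate the constant $n_2=1/24$; a secondary difficulty is to make explicit the factor $\tfrac12$ hidden behind the ``$\gg x$'' in \cref{thm:intro1} and to check that it survives the summation over admissible families.
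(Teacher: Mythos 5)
Your strategy --- secure $\cP_E(\phi)\neq 0$ via the congruence argument for each admissible local family, pass to $L(\tfrac12,\pi'\otimes\eta_E)\neq 0$ by Waldspurger, and then aggregate the families --- is essentially isomorphic to what the paper does, but your account contains several inaccuracies that should be corrected.  First, the constant $\tfrac12$ has nothing to do with ``the congruence/dichotomy argument fuelled by $3\mid\mass(\cO)$.''  The hypothesis $3\mid\mass(\cO)$ enters only through \cref{lem:cong} and \cref{prop:non-vanish}, which convert $3\nmid h_E$ (for $E$ with an optimal embedding) into $\fP_E(\phi)\neq 0$.  The factor $\tfrac12$ is the proportion of imaginary quadratic fields, in a fixed residue class of $\Delta_E$, whose class number is coprime to $3$; it comes from the Davenport--Heilbronn/Nakagawa--Horie/Taya circle of results, packaged here as \cref{prop:density-1} (quoted from \cite[Prop.~9.3]{KL}).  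Second, your claim that at each finite $p\in S_\cO$ ``the admissible $\cF_p$ are exactly the quadratic field extensions of $\Q_p$'' is correct at $p\mid\disc(D)$ (where $\pi'_p=\St_p$ and $\varepsilon(\St_p;\cF_p)=-1$ iff $\cF_p$ is a field) but \emph{wrong} at $p\mid\level(\cO)$.  There $D_p$ is split, $\phi\in\cS_N^\new(\cO)$ forces $\AL(\pi_p)=+1$, hence $\pi'_p\cong\St_p\otimes\omega_p$, and $\varepsilon(\St_p\otimes\omega_p;\cF_p)=+1$ precisely when $\cF_p$ is split \emph{or} ramified --- that is, the unramified quadratic extension is the one excluded, not the split algebra (this is the content of the paper's \cref{cor:optimal-3}, and it is also what \cref{lem:optimal-1} needs).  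The density $\sum_{\cF_p\text{ adm.}}\delta_p$ still equals $\tfrac{p}{2(p+1)}+\tfrac{2}{2(p+1)}=n_p$ in both cases, so your arithmetic survives, but your explanation of \emph{which} local algebras occur is incorrect for half the primes of $\disc(\cO)$.  Third, $n_2=1/24$ is not the output of ``$2$-adic $\varepsilon$-factor bookkeeping''; it is a direct by-product of the same density calculation at $p=2$, where the constraint $\Delta_E\equiv 0,1\pmod 4$ and the modulus $M=16\cdot(\text{odd})^2$ make $\Phi(16)=8$, $q(2)=4$ and the valid-pair count at $2$ equal to $k(\cO)^{-1}=\tfrac14$, which assemble into $\tfrac{1}{24}$ (see the proof of \cref{cor:density-2}).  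Finally, you never say how you know that almost every $E$ in an admissible family actually has an optimal embedding; the paper handles this with the local-global embedding count \cref{lem:local_emb} via \cref{lem:optimal-1}, and this step is necessary for \cref{prop:non-vanish} to apply.  With these corrections your argument matches the paper's: the paper simply organizes the count via congruence classes mod $M=\disc(\cO)^2k(\cO)$ and invokes \cref{cor:density-2} rather than summing explicitly over admissible local families, but the two bookkeeping schemes compute the same number.
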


A conjecture of Goldfeld \cite{Goldfeld} asserts that for an elliptic curve over $\Q$,  the central $L$-values of 50\% of its quadratic twists do not vanish.
If all the Hecke eigenvalues of $\phi$ are in $\Q$,  then the finite part of the standard $L$-function $L(s-\frac12,  \pi')$ of $\pi'$ coincides with the $L$-function $L(s,  C)$ of an elliptic curve $C$ over $\Q$.
In that case,  $L(\frac12,  \pi'\otimes\eta_E)\neq0$ is equivalent to $L(1,  C_E)\neq0$,  where $C_E$ is the quadratic twist of $C$ by $\Delta_E$.
From this point of view,  we formulate a natural generalization (\cref{conj:AutomGoldfeld}) of Goldfeld's conjecture for automorphic $L$-functions of $\PGL_2(\A)$ and related conjectures for toric periods (\cref{conj:WeakPeriodGoldfeld} and \cref{conj:PeriodGoldfeld}). 
The above two theorems provide evidence for these conjectures.

For our result on sign changes,  assume that $\cO$ is a maximal order.
By replacing $\cO$ with another maximal order which is locally isomorphic to $\cO$ if necessary,  we may assume there exists an embedding $\iota\,\colon E\hookrightarrow D$ such that $\iota(E)\cap\cO=\iota(\fo_E)$.
Here,  $\fo_E$ denotes the ring of integers of $E$.
Such $\iota$ is called an \textit{optimal embedding}.
See the argument in \cref{subsec:Dependence} for details.
Using such an embedding,  one can rewrite the toric period $\cP_E(\phi)$ as a finite sum $\fP_E(\phi)$ over the ideal class group of $E$.

Recall that $\phi\in\cS_N^\new(\cO)$ is a Hecke eigenform.
Let $F_\pi$ be the number field generated by the all Hecke eigenvalues of $\phi$,  $\fo_\pi$ its ring of integers. 
Note that $F_\pi$ is totally real.
We fix a $\Z$-basis $\{v_i\}_i$ of $\fo_\pi$ and write the expansion of $x\in\fo_\pi$ as $x=\sum_{i=1}^{[F_\pi:\Q]}x^{(i)}v_i$ with $x^{(i)}\in\Z$.
By multiplying a non-zero constant,  we may assume that $\phi$ takes its values in $\fo_\pi$ and so does $\fP_E(\phi)$.

\begin{theorem}[\cref{thm:sign_change}]\label{thm:intro3}
Keep the above assumption.
\begin{itemize}
\item[(1)] We fix an embedding $F_\pi \hookrightarrow \R$ and regard $\fP_E(\phi)$ as a real number.
Then the sequence $\{\fP_E(\phi)\in\R\}_E$ has infinitely many sign changes.
\item[(2)] The sequence $\{\fP_E(\phi)^{(i)}\in\Z\}_E$ has infinitely many sign changes for at least one $i$.
\end{itemize}
\end{theorem}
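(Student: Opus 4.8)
The plan is to deduce both statements from the fact that the sequence $\{\fP_E(\phi)\}_E$, suitably indexed, coincides up to a single non-zero scalar with the subsequence of Fourier coefficients at fundamental discriminants of a half-integral weight cusp form. Concretely, the arithmetic theta series attached to $\cO$ and the optimal embedding $\iota$ of \cref{subsec:Dependence} --- in the style of Eichler, Gross, and B\"ocherer--Schulze-Pillot --- is a cusp form $g=g_\phi$ of weight $\tfrac32$ on some $\Gamma_0(4M)$ with quadratic character whose $|\Delta_E|$-th Fourier coefficient equals $c_\phi\cdot\fP_E(\phi)$ for a non-zero constant $c_\phi$ independent of $E$, as $E$ runs over the imaginary quadratic fields in $X(\{\cE_v\}_{v\in S_\cO})$. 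Since $\phi$ is a Hecke eigenform with eigenvalues in $F_\pi\subset\R$, the form $g$ is cuspidal, non-zero, and has $\fo_\pi$-rational Fourier coefficients. Thus statement (1) is the assertion that the coefficients of $g$ over the relevant fundamental discriminants change sign infinitely often.

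For this I would form the Dirichlet series $D(s)=\sum_E\fP_E(\phi)\,|\Delta_E|^{-s}$, the sum over the same set of $E$. The conditions cutting out this set --- $\Delta_E$ a negative fundamental discriminant and $E\otimes_\Q\Q_v\simeq\cE_v$ for $v\in S_\cO$ --- are detected by the squarefree indicator together with congruences modulo a fixed integer, so after M\"obius inversion and expansion into Dirichlet characters, and using Shimura's Hecke theory to treat the resulting $U_{d^2}$-contributions, $D(s)$ becomes a finite combination of shifts and twists of the $L$-function $L(g\otimes\chi,s)=\sum_n c_\chi(n)n^{-s}$ of $g$ by Dirichlet characters $\chi$. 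Each such series is the $L$-function of a half-integral weight cusp form, hence entire by Hecke's argument, so $D(s)$ continues holomorphically to a half-plane well past its abscissa of absolute convergence. Now suppose, after possibly replacing $\phi$ by $-\phi$, that $\fP_E(\phi)\ge 0$ for all but finitely many $E$. Deleting the finitely many exceptional terms leaves a Dirichlet series with non-negative coefficients that is still holomorphic past its abscissa of convergence, so by Landau's theorem it converges on the whole real line; evaluating at large negative $s$ forces $\fP_E(\phi)=0$ for all but finitely many $E$. But by Waldspurger's formula (\cref{thm:Waldspurger}), within $X(\{\cE_v\}_{v\in S_\cO})$ one has $\fP_E(\phi)\ne 0$ exactly when $L(\tfrac12,\pi'\otimes\eta_E)\ne 0$, and the latter holds for infinitely many such $E$ by the non-vanishing theorems of Waldspurger and Friedberg--Hoffstein. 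This contradiction proves (1). (As an alternative to the use of Landau's theorem one can play off the Rankin--Selberg lower bound $\sum_{|\Delta_E|\le x}\fP_E(\phi)^2\gg x^{3/2}$ against the convexity bound $\fP_E(\phi)\ll|\Delta_E|^{1/2+\varepsilon}$ and the cusp-form cancellation estimate $\sum_{|\Delta_E|\le x}\fP_E(\phi)\ll x^{7/12+\varepsilon}$, which are incompatible once the $\fP_E(\phi)$ are eventually of one sign.)

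For (2), fix a real embedding $\sigma\colon F_\pi\hookrightarrow\R$. Applying $\sigma$ to the Hecke eigenvalues produces a Galois-conjugate eigenform $\phi^\sigma\in\cS_N^\new(\cO)$, and since $\fP_E$ is a fixed $\Z$-linear combination of point values of the form, $\fP_E(\phi^\sigma)=\sigma(\fP_E(\phi))$; hence $\sum_E\sigma(\fP_E(\phi))|\Delta_E|^{-s}$ is again entire by the argument of the second paragraph applied to $g_{\phi^\sigma}$. Inverting the invertible matrix $(\sigma(v_i))_{\sigma,i}$ expresses $\fP_E(\phi)^{(i)}$ as a fixed linear combination $\sum_\sigma\lambda_{i\sigma}\,\sigma(\fP_E(\phi))$, so $D_i(s):=\sum_E\fP_E(\phi)^{(i)}|\Delta_E|^{-s}$ is holomorphic past its abscissa of convergence for every $i$. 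If, contrary to (2), every integer sequence $\{\fP_E(\phi)^{(i)}\}_E$ were eventually of constant sign, then Landau's theorem applied to $\pm D_i$ would give $\fP_E(\phi)^{(i)}=0$ for all but finitely many $E$, for every $i$, whence $\fP_E(\phi)=0$ for all but finitely many $E$ --- the same contradiction as before. Therefore at least one index $i$ yields infinitely many sign changes.

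The purely analytic ingredients --- Landau's theorem, the convexity bound, and (in the alternative argument) the Rankin--Selberg asymptotic --- are routine. The main work lies elsewhere: first, in making precise the arithmetic-theta dictionary of the first paragraph, namely the weight-$\tfrac32$ modularity of the generating series of the $\fP_E(\phi)$ and the cuspidality and non-vanishing of $g_\phi$, which requires care with the normalization at the places of $S_\cO$ and with the dependence on the optimal embedding; and second, in the sieve-theoretic bookkeeping needed to realize $D(s)$ as a finite combination of entire twisted $L$-functions, in particular controlling the $U_{d^2}$-terms through Shimura's Hecke relations with enough uniformity in $d$ that the resulting series genuinely continues past its abscissa of convergence. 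I expect this second point to be the principal obstacle.
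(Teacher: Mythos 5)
The central step of your argument has a genuine gap. Both your main route and your parenthetical alternative hinge on analytically continuing the restricted series $D(s)=\sum_E \fP_E(\phi)\,|\Delta_E|^{-s}$ (sum over fundamental discriminants in a fixed local class) past its abscissa, by writing it as a \emph{finite} combination of shifts and twists, by Dirichlet characters of fixed modulus, of entire half-integral-weight $L$-functions. That decomposition is not available: the conditions at $S_\cO$ and at $2$ are indeed fixed congruences, but isolating the squarefree (fundamental) part forces you to invert the Shimura-type relation between $a_\phi(m^2|\Delta_E|)$ and $a_\phi(|\Delta_E|)$, and this relation (the paper's \cref{lem:Hecke}) carries the factor $L_\fin(2s-\tfrac12,\pi)/L^{S_\cO}(2s,\eta_E)$, i.e.\ a twist by the character $\eta_E$ whose modulus is $|\Delta_E|$ and grows with $E$. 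After M\"obius inversion the sieve is therefore an infinite one with $E$-dependent twists, not a finite list of fixed twists of $D(s,\cW(\phi))$, and "uniformity in $d$" is not a technical nuisance but the whole difficulty. The same gap undermines the alternative: a cancellation bound of the shape $\sum_{|\Delta_E|\le x}\fP_E(\phi)\ll x^{7/12+\varepsilon}$ \emph{restricted to fundamental discriminants} is not a routine cusp-form estimate --- it is essentially the hard content of the Lester--Radziwi\l{}\l-type results, which (as noted in \cref{subsec:Related}) do not cover weight $\tfrac32$. So the contradiction you aim for is never reached as written.

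The paper circumvents exactly this point by never continuing $D(s)$ itself. Via \cref{cor:Fourier} and \cref{lem:Hecke} it proves \cref{prop:Hecke}, keeping the full Hecke series $D(s,\cW(\phi))$; multiplying by $\zeta^{S_\cO}(2s)$ and expanding $\zeta^{S_\cO}(2s)/L^{S_\cO}(2s,\eta_E)=\sum_d b_E(d)d^{-2s}$ with $b_E(d)\ge 0$, eventual one-signedness of the $\fP_E(\phi)$ is inherited by the full non-negative coefficient array; the pole is then produced not by qualitative non-vanishing but by the quantitative second-moment bound $\sum_{|\Delta_E|<x}|\fP_E(\phi)|^2\gg x^{3/2}$ of \cite{SWY}, contradicting entirety of $D(s,\cW(\phi))$ (Hecke/Fricke). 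Two smaller points: in your part (1), convergence of a non-negative series on the whole real line only gives super-polynomial decay, and non-zero values of $\fP_E(\phi)$ under a single real embedding of $F_\pi$ need not be bounded below by an absolute constant (this is repairable by bounding the conjugates and using that the norm of a non-zero algebraic integer is at least $1$, but it must be said); and the infinitude of non-vanishing you invoke at the end is supplied in the paper by \cite{SW} together with \cref{prop:Fourier}, under the standing hypothesis $L(\tfrac12,\pi)\neq0$ of \cref{sec:Sign changes}, which your write-up should also assume explicitly.
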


As a refinement of this theorem,  we will formulate a conjecture that for $z\in\fo_\pi$,  the probability that $\fP_E(\phi)=z$ and $\fP_E(\phi)=-z$ are ``approximately equal'' (\cref{conj:symmetry}).
This conjecture originates from numerical experiments by using \texttt{Magma} \cite{BCP}.
In \cref{sec:Conjectures},  we provide some graphs which visualize the number of quadratic fields $E$ such that $\fP_E(\phi)=z$ in several cases.


\subsection{Methods}
\label{subsec:Methods}

There are three key ingredients in the proof of \cref{thm:intro1} and \cref{thm:intro2}.
The first one is the congruence result of \cite{Martin} on the existence of $\varphi\in\cS_N(\cO)$ which takes values in $1+p\Z$ for some odd prime $p$.
Since we assume that $\cS_N(\cO)$ is spanned by a single Galois orbit of Hecke eigenforms,  $\fP_E(\phi)\neq0$ holds provided that the class number of $E$ is not divisible by $p$.

The second one is a lower bound for the number of imaginary quadratic fields with the class number coprime to $3$.
For our purpose,  \cite[Proposition 9.3]{KL} is sufficient.

The last one is the existence of optimal embeddings. 
We use the fact that under a reasonable condition,  a quadratic field $E$ which embeds in $D$ has an optimal embedding if $|\Delta_E|$ is sufficiently large (\cref{lem:Duke}).
It is a consequence of a variant of Duke's theorem \cite{Duke} and the subconvex bound for central values of twisted $L$-functions \cite{BH}.

The proof of \cref{thm:intro3} is based on an analysis of the Hecke $L$-series $D(s,  h)$,  the Mellin transform a holomorphic modular form $h$ of weight $\frac32$.
When $h=\cW(\phi)$ is the classical Waldspurger's lift of $\phi\in\cS_N^\new(\cO)$,  an explicit computation of $D(s,  h)$ is carried out in \cref{subsec:Hecke $L$-series} to get the following.

\begin{prop}[\cref{prop:Hecke}]\label{prop:intro}
For a Hecke eigenform $\phi\in\cS_N^\new(\cO)$,
    \[
    D(s,  \cW(\phi))=(2\pi)^{-s}\Gamma(s)L_\fin(2s-\tfrac12,  \pi)
    \sum_E
    \frac{c(E)\fP_E(\phi)}{L^{S_\cO}(2s,  \eta_E)|\Delta_E|^s}.
    \]
Here,  $L_\fin(s,  \pi)$ is the finite part of the standard $L$-function of $\pi$,  $E$ ranges over all quadratic fields which embed in $D$,  $c(E)$ is a certain power of $2$ and $L^{S_\cO}(s,  \eta_E)$ is the partial Dirichlet $L$-function.
\end{prop}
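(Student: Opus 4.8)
The plan is to reduce everything to a computation of the Dirichlet series $\sum_{n\ge 1}a(n)n^{-s}$, where $h=\cW(\phi)=\sum_{n\ge 0}a(n)q^{n}$. By definition the Hecke $L$-series $D(s,h)$ is the Mellin transform of $h$, so $D(s,h)=(2\pi)^{-s}\Gamma(s)\sum_{n\ge 1}a(n)n^{-s}$, and the factor $(2\pi)^{-s}\Gamma(s)$ in the statement is precisely what this change of variable produces. Thus it suffices to identify the Dirichlet series, and for that the two ingredients are an expression for the Fourier coefficients $a(n)$ in terms of the finite toric sums $\fP_E(\phi)$, together with the Hecke-equivariance of the lift $\cW$, which lets the multiplicativity of the eigenvalues of $\phi$ factor the series.

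First I would record the Fourier expansion of the classical Waldspurger lift. Since $D$ is definite, only imaginary quadratic fields embed in $D$, and $a(n)=0$ unless $-n$ is, up to a square factor, the discriminant of such a field. For $-n=\Delta_E$ a fundamental discriminant with $E\hookrightarrow D$, unwinding the construction of $\cW$ as a theta series attached to the Brandt matrices and to the fixed optimal embedding $\iota\colon E\hookrightarrow D$ identifies $a(|\Delta_E|)$ with $\fP_E(\phi)$ up to an explicit local factor $c(E)$, which will turn out to be a power of $2$ supported at $S_\cO$; this is Waldspurger's theorem in its classical incarnation. For a general $n=|\Delta_E|f^{2}$ with $-|\Delta_E|$ fundamental, I would use that $\cW(\phi)$ is again a Hecke eigenform and invoke the half-integral-weight multiplicativity relation (in weight $\tfrac32$ no power of the divisor appears): for $f$ prime to the level,
\[
a(|\Delta_E|f^{2})=a(|\Delta_E|)\sum_{d\mid f}\mu(d)\,\eta_E(d)\,\lambda_{f/d},
\]
where $\lambda_{m}$ is the $m$-th Hecke eigenvalue of $\phi$, equivalently of $\pi'$. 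At the primes $p\in S_\cO$ this relation is modified, and that modification is exactly what replaces the complete Dirichlet $L$-function by the partial one $L^{S_\cO}$.

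To assemble, group the sum over $n$ according to the fundamental discriminant $-|\Delta_E|$ underlying $n$:
\[
\sum_{n\ge 1}a(n)n^{-s}=\sum_{E}|\Delta_E|^{-s}\,a(|\Delta_E|)\sum_{f\ge 1}f^{-2s}\sum_{d\mid f}\mu(d)\,\eta_E(d)\,\lambda_{f/d}.
\]
The inner sum is a Dirichlet convolution, hence equals $L(2s,\eta_E)^{-1}\sum_{m\ge 1}\lambda_{m}m^{-2s}$; after fixing the classical normalization of the Hecke eigenvalues and matching them with those of $\pi'$ under Jacquet--Langlands, $\sum_{m\ge 1}\lambda_{m}m^{-2s}=L_\fin(2s-\tfrac12,\pi)$. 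Folding in the corrections at $S_\cO$ from the previous step turns $L(2s,\eta_E)^{-1}$ into $L^{S_\cO}(2s,\eta_E)^{-1}$ and absorbs the leftover local constants into $c(E)$. Since $L_\fin(2s-\tfrac12,\pi)$ does not depend on $E$, it comes outside the sum, and restoring $(2\pi)^{-s}\Gamma(s)$ yields the asserted identity.

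The main obstacle is the local analysis at the primes $p\in S_\cO$: there $D$ is ramified or $\cO_p$ is a non-maximal Eichler order of level $p$, so the local computation of $a(n)$ at $p$ deviates from the generic shape both in the constant (the power of $2$ entering $c(E)$) and in the $p$-Euler factor of the $f$-series, and one must check that these two deviations match up so as to produce exactly $L^{S_\cO}(2s,\eta_E)$ together with a clean power of $2$. Here I would use that $\phi\in\cS_N^\new(\cO)$ is an Atkin--Lehner eigenvector at each $p\in S_\cO$ to pin the local behavior down, and rely on the standard local Shimura/Waldspurger computations. A secondary, purely bookkeeping, issue is to get the normalization shift $2s-\tfrac12$ right, i.e.\ to reconcile the analytic normalization of $L(s,\pi)$ with the classical normalization of the $L$-function of the Shimura lift of $\cW(\phi)$.
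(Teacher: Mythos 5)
Your plan reproduces the structure of the paper's argument: rewrite $D(s,\cW(\phi))$ as the Dirichlet series $\sum_n a_\phi(n)n^{-s}$, group by underlying fundamental discriminant so that the sum becomes $\sum_{E\in X(D)}|\Delta_E|^{-s}\sum_m a_\phi(m^2|\Delta_E|)m^{-2s}$, identify the leading coefficient $a_\phi(|\Delta_E|)=c(E)\fP_E(\phi)$, and then resolve the inner $m$-sum into $L_\fin(2s-\frac12,\pi)/L^{S_\cO}(2s,\eta_E)$. This is exactly what the paper does (\cref{cor:Fourier}, \eqref{eq:Hecke}, and \cref{lem:Hecke} combined). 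Your weight-$\frac32$ Kohnen relation $a(|\Delta_E|f^2)=a(|\Delta_E|)\sum_{d\mid f}\mu(d)\eta_E(d)\lambda_{f/d}$ at good primes is correct and its Dirichlet convolution gives the right Euler factors there.

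Where you diverge from the paper is in the source of that multiplicativity. You propose to invoke it as a known consequence of the Shimura correspondence for the Kohnen plus space; the paper instead \emph{derives} it from the period side: it expresses $a_\phi(m^2|\Delta_E|)$ via \eqref{eq:Fourier-3}, reduces to periods $\fP_{\fo(d^2\Delta_E)}(\phi)$ of non-maximal orders, and evaluates the resulting local integrals $\beta_{\iota_0,E_p}(p^r;\phi_p)$ explicitly using a generating-function identity from Keaton--Pitale. Both routes are legitimate; the paper's is self-contained and, more importantly, handles the primes in $S_\cO$ automatically. In Section 5 the order $\cO$ is assumed maximal, so $S_\cO$ is just $\{\infty\}\cup\{p:p\mid\disc(D)\}$ (your mention of non-maximal Eichler primes does not arise here). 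At $p\mid\disc(D)$ the embedding theory forces $\Emb(\fo_p,\cO_p)_{/\sim}=\emptyset$ unless $\fo_p=\fo_{E,p}$, so only $d$ coprime to $\disc(D)$ contribute; the $m$-series therefore acquires a bare local factor $\zeta_p(s)$ at such $p$, which matches $L(s-\frac12,\pi_p)=\zeta_p(s)$ (Steinberg on the $\PGL_2$ side, trivial on the division algebra side) with no $\eta_E$-factor. This is precisely the local computation you flagged as ``the main obstacle'' but did not carry out: to complete your proof along the Kohnen-theoretic route, you would need to cite or verify the analogue of the multiplicativity relation at the ramified primes and check that the resulting Euler factor is $\zeta_p(2s)$, matching $L^{S_\cO}$ and the constant $c(E)$. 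The paper sidesteps this issue by deriving everything from the period side where the ramified primes are handled uniformly by the local embedding count.
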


Suppose that there are only finitely many sign changes.
We may assume $\fP_E(\phi)>0$ for almost all $E$.  
It follows from the above explicit formula for $D(s,  \cW(\phi))$ combined with the mean value formula for $\{|\fP_E(\phi)|^2\}_E$ obtained in \cite{SWY} that $D(s,  \cW(\phi))$ has a pole at $s=s_0$ for some $s_0\geq\frac34$.
On the other hand,  one can see that $D(s,  \cW(\phi))$ is entire from its functional equation and the Hecke bound for the Fourier coefficients of half-integral weight modular forms.
This is a contradiction.

The classical Waldspurger's lift of an algebraic modular form is introduced by Gross \cite{Gross}. 
B\"ocherer and Schulze-Pillot \cite{BSP1} proved an explicit relation between the square of the Fourier coefficients of $\cW(\phi)$ and the central $L$-values of the Jacquet-Langlands transfer of $\phi$.
Their result is an explicit version of the Waldspurger's formula \cref{thm:Waldspurger} (2),  which relates the central $L$-values with the square of the toric periods of $\phi$.
Hence one naturally expects an equality between the Fourier coefficients of $\cW(\phi)$ and the toric periods $\fP_E(\phi)$.
We prove that the $|\Delta_E|$-th Fourier coefficient $a_\phi(|\Delta_E|)$ of $\cW(\phi)$ equals $c(E)\fP_E(\phi)$ (see \cref{cor:Fourier}).
This plays an important role in the proof of \cref{prop:intro}.


\subsection{Related works}
\label{subsec:Related}

The non-vanishing result of this paper is inspired by \cite{Martin}.
The main new ingredient in our proof is the existence of optimal embeddings.
As already noted,  under a mild condition,  we establish the existence of optimal embeddings for all but finitely many quadratic fields which embed in $D$.
It should be also mentioned that we take care of the local-global compatibility of the number of conjugacy classes of optimal embeddings (\cref{subsec:Action}).
As a consequence,  we see that a quadratic field $E$ which embeds in $D$ has an optimal embedding for some Eichler orders in a fixed local isomorphism class if $E$ splits or ramifies at some finite number of places.
These arguments enable us to treat not only maximal orders but also Eichler orders.

Our research on sign changes of toric periods should be compared with those on Fourier coefficients $a(n)$ of half-integral weight modular forms since $c(E)\fP_E(\phi)=a_\phi(|\Delta_E|)$.
The study of the sign change problem of the Fourier coefficients of half-integral modular forms is initiated by Brunier and Kohnen \cite{BK}.  
They proved that the sequence $\{a(tn^2)\}_n$ for a square-free positive integer $t$ has infinitely many sign changes if $a(t)\neq0$.
For recent progress in this direction we refer to \cite{KLW},  \cite{HK},   \cite{IW} and \cite{Mezroui}.
The investigation of the sequence $\{a(tn^2)\}_n$ is motivated by the fact that it determines integral weight modular forms under the Shimura correspondence.

The sign change problem of $\{a(t)\}_t$,  the sequence restricted to square-free indices,  is studied by \cite{HKKL} and \cite{LRW}. 
They treat half-integral weight modular forms on $\Gamma_0(4)$.
The case of general level is studied quite recently by \cite{LR}.   
They proved that the sequence $\{a(|\Delta_E|)\}_E$ has infinitely many sign changes when $E$ runs through real or imaginary quadratic fields.
However,  it seems that the case of weight $\frac32$ modular forms is excluded in their result.
\cref{thm:intro3} focuses on the remaining case: the case of weight $\frac32$ modular forms of general level.

\vspace{2mm}
\noindent\textbf{Data Availability Statement}
The data that support the findings of this study are available in \url{http://wakatsuki.w3.kanazawa-u.ac.jp/Figures.html}.

\vspace{2mm}
\noindent\textbf{Acknowledgments.} 
The authors thank Tamotsu Ikeda,  Toshiki Matsusaka,  Masataka Chida and Kimball Martin for helpful discussions and valuable comments. 
The authors also thank Siegfried B\"{o}cherer,  Rainer Schulze-Pillot and Winfried Kohnen for answering many questions.
M.S. was partially supported by Grant-in-Aid for JSPS Fellows No.20J00434.
S.W. was partially supported by JSPS Grant-in-Aid for Scientific Research (C) No.18K03235 and (B) No.21H00972.
S.Y. was partially supported by JSPS Grant-in-Aid for
Scientific Research (C) No.20K03537.


\section{Preliminaries}
\label{sec:Preliminaries}


\subsection{General notation}
\label{subsec:General notation}

The cardinality of a finite set $A$ is denoted by $|A|$ or $\#A$.

Throughout the manuscript,  we keep the following setup.
We denote by $\A$ the ring of ad\`eles of $\Q$.
Let $\A_f$ denote the ring of finite ad\`eles.
Finite places of $\Q$ are identified with primes.
We write the real place of $\Q$ as $\infty$.
Let $\Q_v$ denote the completion of $\Q$ at a place $v$.
The completed Dedekind zeta function of $\Q$ is denoted by $\zeta(s)$, which is the Euler product $\prod_v\zeta_v(s)$ of local factors,  where $\zeta_p(s)=(1-p^{-s})^{-1}$ for a prime $p$ and $\zeta_\infty(s)=\pi^{-\tfrac{s}{2}}\Gamma(\tfrac{s}{2})$.

For a quadratic field $E$,  the fundamental discriminant is denoted by $\Delta_E$.
Let $\A_E$ (resp.\,$\A_{E,  f}$) be the ring of (resp.\,finite) ad\`eles of $E$ and $\eta_E=\otimes_v\eta_{E,  v}$ the quadratic character of $\A^\times/\Q^\times$ attached to $E$.
The completed Hecke $L$-function of $\eta_E$ is denoted by $L(s,  \eta_E)$.
For each place $v$,  set $E_v=E\otimes_\Q\Q_v$.
Following the usual convention,  we use the notation $(\frac{\cdot}{\cdot})$ for the Legendre symbol.

Let $D$ be a quaternion algebra over $\Q$. 
We denote by $\disc(D)$ its discriminant. 
For a place $v$ of $\Q$,  set $D_v=D\otimes_\Q\Q_v$.
We also set $D_\A=D\otimes_\Q\A$ and $D_{\A_f}=D\otimes_\Q\A_f$.
Let $X(D)$ be the set of quadratic fields which embed in $D$.
For $E\in X(D)$,  we write the set of embeddings $E \hookrightarrow D$ by $\Emb(E,  D)$.
By the Skolem-Noether theorem,  $D^\times$ acts transitively on $\Emb(E,  D)$ by conjugation.
Let $X$ denote the set of all quadratic fields.
Unless otherwise mentioned,  we assume that $D$ is definite. 
In that case,  $D_\infty=D\otimes_\Q\R$ is isomorphic to the Hamilton's quaternion and $X(D)$ consists of imaginary quadratic fields.
Similarly,  let $X(D_v)$ be the set of quadratic \'etale algebras over $\Q_v$ which embeds in $D_v$.
When $D_v=\M_2(F_v)$ is the algebra of 2 by 2 matrices,  we write it as $X_v$,  the set of all quadratic \'etale algebras over $\Q_v$.

We denote by $G$ the algebraic group over $\Q$ such that $G(R)=(D\otimes_\Q R)^\times/R^\times$ for a $\Q$-algebra $R$.
To simplify the notation,  we write the group of $R$-rational points $G(R)$ as $G_R$ for any $\Q$-algebra $R$.
For a place $v$ of $\Q$,  set $G_v=G(\Q_v)$.
We use similar notation for subgroups defined over $\Q$.
The projection map $D^\times\rightarrow G$ is written as $x\mapsto\bar{x}$.
We denote the image of a subset $Y$ of $D^\times$ by $\overline{Y}$.

Let $\cO$ be an Eichler order in $D$.
Denote its level by $\level(\cO)$ and we assume that $\level(\cO)$ is square-free. 
The discriminant of $\cO$ is defined as $\disc(\cO)=\disc(D)\level(\cO)$.
For a finite place $v$,  set $\cO_v=\cO\otimes_\Z\Z_v$.
Let $S_\cO$ be the finite set of the real place $\infty$ and all finite places $v$ at which $\cO_v\not\simeq\M_2(\Z_v)$.
Equivalently,  $S_\cO$ is the set of the real place and the prime factors of $\disc(\cO)$.

For each place $v$,  let $K_v$ be the open compact subgroup of $G_v$ given by $K_v=\overline{(\cO_v^\times)}$ if $v$ is finite and $K_\infty=G_\R$.
Then $K=\prod_vK_v$ is an open compact subgroup of $G_\A$.
We normalize the Haar measure on $G_\A$ so that $\vol(K)=1$.
For each place $v$,  let $N_v$ be the normalizer of $K_v$ in $G_v$.
Set $N=\prod_v N_v$.
Note that $N_v=K_v$ for $v\not\in S_\cO$ and $[N_v:K_v]=2$ otherwise.
Hence we have $N/K\simeq\prod_{p\mid \disc(\cO)}\Z/2\Z$ (\cite[\S\,23]{Voight}).
In particular,  $[N:K]$ is a power of 2.
Emphasizing the dependence on $\cO$,  we will sometimes write $K_\cO$ and $N_\cO$ in place of $K$ and $N$,  respectively.


\subsection{Algebraic modular forms}
\label{subsec:Algebraic modular}

We summarize necessary facts and notation about algebraic modular forms on $G_\A$.
Most of the material here is well-known.
See \cite{Voight},  for example.

Suppose that $D$ is definite.
A complex valued function $\phi$ on $G_\A$ which satisfies
    \[
    \phi(\gamma gk)=\phi(g), \qquad \gamma\in G_\Q,  \, g\in G_\A,  \, k\in K_\cO
    \]
is called an \textit{algebraic modular form} of level $\cO$.
Let $\cA(\cO)$ be the space of such functions.
The group $G_\A$ acts on the sum of these spaces $\sum_\cO\cA(\cO)$ by the right translation $R_G$,  where $\cO$ runs through (Eichler) orders in $D$.

Set $h_\cO=|G_\Q\bs G_\A/K_\cO|$ and fix a set of representatives $\{x_i\}_{i=1}^{h_\cO}$ of cosets in $G_\Q\bs G_\A/K_\cO$.
Since $K_\infty=G_\R$,  we can and will take $x_i$'s from $G_{\A_f}$.
Note that $\dim_\C\cA(\cO)=h_\cO$.
For $x\in G_\A$ set $w(x)=|G_\Q\cap xKx^{-1}|$ so that we have $\vol(G_\Q\bs G_\A)=\sum_{i=1}^{h_\cO}w(x_i)^{-1}$.
Write this value as $\mass(\cO)$.
We define the inner product $(\ , \ )$ on $\cA(\cO)$ by
    \[
    (\phi_1,  \phi_2)=\sum_{i=1}^{h_\cO}w(x_i)^{-1}\phi_1(x_i)\overline{\phi_2(x_i)},
    \qquad \phi_1,  \, \phi_2 \in\cA(\cO).
    \]
The symbol $\langle \ ,  \ \rangle$ denotes the Petersson inner product on $L^2(G_\Q\bs G_\A)$ with respect to 2 times the Tamagawa measure on $G_\A$. 
Then we have $(\ , \ )=4^{-1}\mass(\cO)\langle \ ,  \ \rangle$.
Let $\cS(\cO)$ denote the orthogonal complement of the space of constant functions in $\cA(\cO)$.
An element of $\cS(\cO)$ is called a \textit{cusp form}.

Let $\cA_N(\cO)$ be the subspace of right $N_\cO$-invariant elements in $\cA(\cO)$,  which are often regarded as functions on $G_\Q\bs G_\A/N_\cO$.
Set $t_\cO=|G_\Q\bs G_\A/N_\cO|$ and fix  a set of representatives $\{y_j\}_{j=1}^{t_\cO}$ of cosets in $G_\Q\bs G_\A/N_\cO$.
We take $y_j$'s from $G_{\A_f}$.
Note that $\dim_\C\cA_N(\cO)=t_\cO$.
Set $\cS_N(\cO)=\cA_N(\cO)\cap\cS(\cO)$.

If an order $\cO'$ in $D$ contains $\cO$,  we may regard $\cS(\cO')\subset\cS(\cO)$.
Let $\cS^\new(\cO)$ be the orthogonal complement of $\sum_{\cO'}\cS(\cO')$ in $\cS(\cO)$,  where $\cO\subsetneqq\cO'$ are orders in $D$.
Set $\cS_N^\new(\cO)=\cS_N(\cO)\cap\cS^\new(\cO)$.

%

For a prime $p\not\in S_\cO$,  let $\gamma_p$ denote the element of $(D_{\A_f})^\times$ which is the identity at all finite places $v$ other than $p$,  and corresponds to   
    $\renewcommand{\arraystretch}{0.8}
    \begin{pmatrix}
    p & 0 \\
    0 & 1
    \end{pmatrix}$
at $p$ under the isomorphism $\cO_p\simeq\M_2(\Z_p)$.
Take a set of representatives $\{\alpha_j\}$ of $K\gamma_pK/K$ and define the \textit{$p$-th Hecke operator} $T_p$ on $\cA(\cO)$ and $\cA_N(\cO)$ by
    \[
    (T_p\phi)(x)=\sum_j \phi(x\alpha_j),  \qquad \phi\in\cA(\cO).
    \]
The Hecke operators $\{T_p\}_{p\not\in S_\cO}$ are simultaneously diagonalizable. 
A simultaneous eigenvector is called a \textit{Hecke eigenform}.
For a Hecke eigenform,  the number field generated by its all Hecke eigenvalues is called the \textit{Hecke field}.
We say that a Hecke eigenform is \textit{normalized} if it takes values in the ring of integers of its Hecke field.
Any Hecke eigenforms are constant multiple of normalized Hecke eigenforms.
Let $F_\cO$ (resp.\,$F_{N,  \cO}$) be the composite of Hecke fields for all Hecke eigenforms in $\cA(\cO)$ (resp.\,$\cA_N(\cO)$).
This is a totally real finite Galois extension of $\Q$.

We translate the above ad\`elic description of algebraic modular forms into the classical framework using the terminology of quaternion ideals.
A \textit{right fractional $\cO$-ideal} is a $\Z$-lattice $I\subset D$ which verifies $I\alpha\subset I$ for any $\alpha\in\cO$.
Two right fractional $\cO$-ideals $I$ and $J$ are \textit{in the same right class} if there exists $\alpha\in D^\times$ such that $\alpha I=J$ and denote by $[I]$ the right class of $I$.

Let $\Cl(\cO)$ be the set of right classes of right fractional $\cO$-ideals $I$ satisfying $I\otimes_\Z\Z_v=x_v\cO_v$ for each finite place $v$ with some $x_I=(x_v)_v\in D_{\A_f}^\times$.
The map $I\mapsto x_I$ descends to a well-defined bijection from $\Cl(\cO)$ to $G_\Q\bs G_\A/K$.
The order $h_\cO$ of the set $\Cl(\cO)$ is called the \textit{class number} of $\cO$.
When we write $\Cl(\cO)=\{[I_1],  \ldots,  [I_{h_\cO}]\}$,  we always assume that $I_1=\cO$ is the trivial fractional $\cO$-ideal.
The set of representatives of cosets in $G_\Q\bs G_\A/K$ we fixed above will be denoted as $\{x_{[I]}\}_{[I]\in\Cl(\cO)}$ so that each $x_{[I]}$ corresponds to $[I]$.

Let $\Typ(\cO)$ be the set of isomorphism classes of orders $\cO'$ in $D$ \textit{locally isomorphic to} $\cO$,  \textit{i.e.} $\cO'_v=y_v\cO_vy_v^{-1}$ for each finite place $v$ with some $y_{\cO'}=(y_v)\in D_{\A_f}^\times$.
Then the map $\cO'\mapsto y_{\cO'}$ descends to a well-defined bijection from $\Typ(\cO)$ to $G_\Q\bs G_\A/N$.
The order $t_\cO$ of the set $\Typ(\cO)$ is called the \textit{type number} of $\cO$.
When we write $\Typ(\cO)=\{[\cO_1],  \ldots,  [\cO_{t_\cO}]\}$,  we always assume that $\cO_1=\cO$.
The set of representatives of cosets in $G_\Q\bs G_\A/N$ we fixed above will be denoted as $\{y_{[\cO']}\}_{[\cO']\in\Typ(\cO)}$ so that each $y_{[\cO']}$ corresponds to $[\cO']$.

The projection from $G_\Q\bs G_\A/K$ to $G_\Q\bs G_\A/N$ induces a surjective map from $\Cl(\cO)$ to $\Typ(\cO)$.
    \[
    \xymatrix{
    G_\Q\bs G_\A/K \ar @{->>}[r] \ar_{\rotatebox{90}{$\sim$}}[d] 
    \ar@{}[dr]|\circlearrowleft
    & G_\Q\bs G_\A/N \ar^{\rotatebox{90}{$\sim$}}[d] \\
    \Cl(\cO) \ar @{->>}[r]& \Typ(\cO)
    }
    \]
For a right fractional $\cO$-ideal $I\subset D$,  set $\cO(I)=\{\alpha\in D \mid \alpha I\subset I\}$.
It is easy to check that $\cO(I)$ is an order in $D$ which is locally isomorphic to $\cO$,  the isomorphism class of $\cO(I)$ depends only on the right class of $I$ and the map $\Cl(\cO) \twoheadrightarrow \Typ(\cO)$ defined above is given by $[I] \mapsto [\cO(I)]$.
We may regard $\cA(\cO)$ as the space of functions on $\Cl(\cO)$ and $\cA_N(\cO)$ its subspace of functions which factor through $\Typ(\cO)$ or the space of functions on $\Typ(\cO)$.

\subsection{Toric periods}\label{subsec:Toric periods}

We summarize some facts on toric periods.
For a moment,  let $\pi=\otimes_v\pi_v$ be a general irreducible cuspidal automorphic representation of $G_\A$,  which is not a character.
We may remove the assumption that $D$ is definite.
Let $\pi'=\otimes_v\pi'_v$ be the Jacquet-Langlands transfer of $\pi$ to $\PGL_2(\A)$.
Take a cusp form $\phi\in\pi$ with decomposition $\phi=\otimes_v\phi_v$.
For $E\in X(D)$,  let $T=T_E$ be the subtorus of $G$ such that $T(R)=(E\otimes_\Q R)^\times/R^\times$ for a $\Q$-algebra $R$.
Here we fixed an embedding $\iota_0\,\colon E\hookrightarrow D$.
Let $\d t$ be the Tamagawa measure on $T_\A$ and $\d t_v$ the local Tamagawa measure on $T_v$ which satisfies $\d t=L(1,  \eta_E)^{-1}\prod_v \d t_v$.
The \textit{toric period} of $\phi$ with respect to $E$ is the integral
     \[
     \cP_{\iota_0,  E}(\phi)=\int_{T_\Q\bs T_\A}\phi(t) \d t. 
     \]   
Note that the property that $\cP_{\iota_0,  E}\not\equiv0$ on $\pi$ is independent of $\iota_0$.
Hence we say $\cP_E\not\equiv0$ on $\pi$.
    
We fix a $G_v$-invariant inner product $\langle \ ,  \ \rangle_v$ on $\pi_v$ so that $\langle \ ,  \ \rangle=\prod_v\langle \ ,  \ \rangle_v$.
Recall that $\langle \ ,  \ \rangle$ is the Petersson inner product with respect to 2 times the Tamagawa measure on $G_\A$.
We define the local toric period $\alpha_{\iota_0,  E_v}(\phi_v)$ and its normalization $\alpha_{\iota_0,  E_v}^\natural(\phi_v)$ as
    \[
    \alpha_{\iota_0,  E_v}(\phi_v)
    =\int_{T_v}\langle \pi_v(t_v)\phi_v,  \phi_v\rangle_v \d t_v,   \quad
    \alpha_{\iota_0,  E_v}^\natural(\phi_v)=
    \frac{L(1,  \eta_{E,  v})L(1,  \pi'_v,  \Ad)}
    {\zeta_v(2)L(\frac12,  \pi'_v)L(\frac12,  \pi'_v\otimes\eta_{E,  v})}\,
    \alpha_{\iota_0,  E_v}(\phi_v).
    \]
Here,  $L(s,  \pi'_v)$ and $L(s,  \pi'_v\otimes\eta_{E,  v})$ are local standard $L$-factors of $\pi'_v$ and $\pi'_v\otimes\eta_{E,  v}$,  respectively and $L(s,  \pi'_v,  \Ad)$ is the local adjoint $L$-factor of $\pi'_v$.
For $\cE_v\in X(D_v)$ (with fixed embedding $\cE_v\hookrightarrow D_v$),  we similarly define $\alpha_{\cE_v}(\phi_v)$ and $\alpha_{\cE_v}^\natural(\phi_v)$.
Let $\varepsilon(\pi'_v; \cE_v)$ (resp.\,$\varepsilon(\pi'; E)$) be the root number of the base change of $\pi'_v$ to $\PGL_2(\cE_v)$ (resp.\,$\pi'$ to $\PGL_2(\A_E)$).

Waldspurger \cite{Wal2} proved the relation between the toric periods and the central $L$-values.
\begin{theorem}\label{thm:Waldspurger}
We keep the notation.
\begin{itemize}
\item[(1)] If $\cP_E\not\equiv0$ on $\pi$,  then the ramification set of $D$ coincides with the set of places $v$ at which we have $\varepsilon(\pi'_v; E_v)=-1$.
Conversely,  if this condition is satisfied,  then $\cP_E\not\equiv0$ on $\pi$ if and only if $L(\tfrac12,  \pi')L(\tfrac12,  \pi'\otimes\eta_E)\neq0$. 
\item[(2)]For $\phi=\otimes_v\phi_v\in\pi=\otimes_v\pi_v$ as above,  
    \[
    |\cP_{\iota_0,  E}(\phi)|^2
    =\frac{\zeta(2)L(\frac12,  \pi')L(\frac12,  \pi'\otimes\eta_E)}
    {L(1,  \eta_E)^2L(1,  \pi',  \Ad)}\prod_v\alpha_{\iota_0,  E_v}^\natural(\phi_v).
    \]
\end{itemize}
\end{theorem}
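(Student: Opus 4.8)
The plan is to deduce both parts from the theta correspondence for the dual pair $(\widetilde{\SL}_2,\PGL_1(D))$ attached to the $3$-dimensional quadratic space $D^0$ of trace-zero elements, in the spirit of Waldspurger, in three stages: a local dichotomy, an unramified local computation, and a global Rankin--Selberg / Rallis inner product argument. Since the statement is phrased with the Tamagawa-normalized measures $\d t$, $\d t_v$ and the factorized Petersson product $\langle\ ,\ \rangle=\prod_v\langle\ ,\ \rangle_v$, I would first pin down those normalizations; everything downstream is constant-chasing relative to them.

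\textbf{Local input.} For every place $v$ I would establish the Tunnell--Saito dichotomy: $\dim_\C\Hom_{T_v}(\pi_v,\C)\le 1$, with equality precisely when the appropriate local root number $\varepsilon(\pi'_v;\cE_v)$ matches the Hasse invariant of $D_v$ (that is, $\varepsilon(\pi'_v;\cE_v)=-1$ when $D_v$ is a division algebra and $=+1$ when $D_v\simeq\M_2(\Q_v)$). This is a finite case analysis over $\Q_v$ using the classification of $\pi'_v$ into principal series, special, and supercuspidal pieces. Because $\alpha_{\iota_0,\cE_v}(\phi_v)\neq 0$ exhibits a nonzero $T_v$-invariant functional on $\pi_v$, the Euler product $\prod_v\alpha^\natural_{\iota_0,E_v}(\phi_v)$ vanishes identically unless $D$ has exactly the ramification set described in (1); this already proves the first assertion of (1). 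The uniqueness of the $T_v$-invariant functional is what makes $\alpha^\natural_{\iota_0,E_v}$ the canonically normalized object that should appear in (2).

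\textbf{Unramified local factors.} At almost all $v$ (all data unramified, $\phi_v$ a spherical vector of norm $1$) I would compute $\alpha_{\iota_0,E_v}(\phi_v)=\int_{T_v}\langle\pi_v(t_v)\phi_v,\phi_v\rangle_v\,\d t_v$ by Macdonald's formula for the spherical function of $\pi_v$, and check that the resulting ratio of local $L$- and $\zeta$-factors is exactly the normalizing denominator, i.e.\ $\alpha^\natural_{\iota_0,E_v}(\phi_v)=1$. This guarantees that the product in (2) converges and reduces (2) to a statement comparing the global period to the global central value.

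\textbf{Global argument.} Let $\theta(\phi)$ be the theta lift of $\phi$ to $\widetilde{\SL}_2$, a half-integral weight cusp form which is the Shimura--Waldspurger partner of $\phi$. I would combine two facts. First, unfolding the Weil representation shows that the Fourier coefficient of $\theta(\phi)$ indexed by $\Delta_E$ equals $\cP_{\iota_0,E}(\phi)$ up to an explicit product of local constants. Second, the Petersson norm $\langle\theta(\phi),\theta(\phi)\rangle$ is evaluated by the Rallis inner product formula: via the Siegel--Weil formula it becomes the central value of an Eisenstein series integrated against $\phi\otimes\overline{\phi}$, hence a multiple of $L(\tfrac12,\pi')\,\langle\phi,\phi\rangle$ by explicit local zeta integrals. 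Feeding these into the classical Waldspurger--Shimura identity relating the square of the $\Delta_E$-th Fourier coefficient of $\theta(\phi)$ to $L(\tfrac12,\pi'\otimes\eta_E)$ — itself obtained by a Rankin--Selberg integral of $\theta(\phi)$ against a half-integral Eisenstein series — and matching every surviving local factor against $\alpha^\natural_{\iota_0,E_v}(\phi_v)$ yields formula (2). The converse in (1) then follows: if the local $\varepsilon$-conditions hold, one chooses $\phi_v$ at the ramified places with $\alpha^\natural_{\iota_0,E_v}(\phi_v)\neq 0$ (possible by the nonvanishing half of Tunnell--Saito together with nonvanishing of the local period on the line of invariants); then (2) shows $\cP_E\not\equiv 0$ on $\pi$ if and only if $L(\tfrac12,\pi')L(\tfrac12,\pi'\otimes\eta_E)\neq 0$.

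\textbf{Main obstacle.} The hard part is the bookkeeping in the global step: arranging the theta-correspondence computation so that each residual local factor is exactly $\alpha^\natural_{\iota_0,E_v}(\phi_v)$ and the global constant is exactly $\zeta(2)/\bigl(L(1,\eta_E)^2L(1,\pi',\Ad)\bigr)$. This rests on a sufficiently precise Siegel--Weil / Rallis inner product formula in this low-rank situation and on carefully tracking the Tamagawa-measure normalizations already built into $\d t$ and $\langle\ ,\ \rangle$; by comparison the local dichotomy and the Macdonald-type unramified computation, while technical, are routine.
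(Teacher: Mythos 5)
The paper does not prove this theorem. It is quoted verbatim as an established result of Waldspurger, with the single citation to \cite{Wal2} in the sentence preceding it; the paper takes it as a black box and uses it as input (e.g.\ in Lemma 2.3, Proposition 3.9, and in the proofs of the Goldfeld-type theorems). So there is no in-paper proof to compare against. Your sketch is directionally consistent with how Waldspurger-type period formulas are proved in the literature --- Tunnell--Saito for the local dichotomy, a Macdonald-type computation for the unramified local periods, and a theta-lift / Rallis-inner-product / Siegel--Weil argument for the global identity, with the whole difficulty being the bookkeeping of Tamagawa-normalized local constants --- and you correctly flag that bookkeeping as the main obstacle. Two cautions: first, what you describe would need to recover the specific normalized form with $\alpha^\natural_{\iota_0,E_v}$, which is a later explicitation of Waldspurger's 1985 result rather than its original phrasing, so simply invoking the ``classical Waldspurger--Shimura identity'' sweeps the hard part under the rug; second, the step where you ``unfold the Weil representation'' to identify the $\Delta_E$-th Fourier coefficient of $\theta(\phi)$ with $\cP_{\iota_0,E}(\phi)$ is, in this paper, not a step in the proof of Theorem~2.2 at all but the content of an independent result (Theorem~3.5 / Corollary~3.6) that the authors prove separately and use for the sign-change theorem, not for Waldspurger's formula --- so conflating the two blurs the logical structure. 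None of this rises to a gap in the sense of a wrong argument, but the proposal is a roadmap to a substantial external result, not a proof, and cannot be meaningfully graded against a nonexistent in-paper proof.
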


Now we return to an algebraic modular form $\phi=\otimes_v\phi_v\in\cA(\cO)$,  in particular $D$ is definite.
Later we will use the following bound for the Euler products in the Waldspurger's formula.

\begin{lemma}\label{lem:bound_period}
There is a positive constant $C$ such that $|\Delta_E|^{\tfrac12}\left|
    \prod_v\alpha_{\iota_0,  E_v}^\natural(\phi_v)\right| \leq C$
for any $E\in X(D)$.
\end{lemma}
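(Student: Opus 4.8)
The strategy is to reduce the bound to a product of purely local quantities, show that all but finitely many local factors equal $1$, and bound the finitely many remaining factors (including the archimedean one) uniformly. Concretely, I would first observe that $\phi\in\cA(\cO)$ is a finite linear combination of Hecke eigenforms, so it suffices to prove the bound when $\phi$ is itself a Hecke eigenform generating an irreducible $\pi=\otimes_v\pi_v$; the general case follows by the triangle inequality after expanding $\prod_v\alpha^\natural_{\iota_0,E_v}(\phi_v)$ bilinearly (the normalized local periods depend bilinearly on $\phi_v$, and cross terms between distinct eigenforms can be handled the same way, or one simply bounds each summand). Fixing such a $\pi$, and factoring $\phi=\otimes_v\phi_v$, I would write
\[
\prod_v\alpha^\natural_{\iota_0,E_v}(\phi_v)
=\prod_v
\frac{L(1,\eta_{E,v})L(1,\pi'_v,\Ad)}
{\zeta_v(2)L(\tfrac12,\pi'_v)L(\tfrac12,\pi'_v\otimes\eta_{E,v})}\,
\alpha_{\iota_0,E_v}(\phi_v),
\]
and analyze it place by place.

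**Local analysis.** For a prime $p\notin S_\cO$ at which $E$ is unramified and $\phi_p$ is a normalized spherical vector, $\pi'_p$ is an unramified principal series and a standard unramified computation (Waldspurger; see e.g.\ the references underlying \cref{thm:Waldspurger}) gives $\alpha^\natural_{\iota_0,E_v}(\phi_v)=1$ exactly. Thus the only places contributing nontrivially are: the archimedean place; the primes in $S_\cO$ (a fixed finite set depending only on $\cO$); and the primes dividing $\Delta_E$. For a prime $p\mid\Delta_E$ with $p\notin S_\cO$, the local factor is one of a bounded list of possibilities — it depends only on the isomorphism class of $\pi'_p$ (unramified, a fixed datum) and on whether $E_p$ is ramified or split-ramified, i.e.\ on $\eta_{E,p}$ being the unramified or the ramified quadratic character of $\Q_p^\times$ — so it is bounded in absolute value by a constant times $\zeta_p(2)^{-1}\cdot$(a power of $p$) coming from the conductor of $\eta_{E,p}$. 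Multiplying these together across $p\mid\Delta_E$ produces a factor comparable to $|\Delta_E|^{-1/2}$ (up to bounded multiplicative error), because $\prod_{p\mid\Delta_E}p$ is essentially $|\Delta_E|$ and the relevant local weight is the square root of the conductor; this is precisely the $|\Delta_E|^{1/2}$ that the statement multiplies back in. The places in $S_\cO$ and $v=\infty$ contribute only a bounded factor: there are finitely many of them, $\pi_v$ and $\phi_v$ are fixed, and $\alpha^\natural_{\iota_0,E_v}(\phi_v)$ ranges over a finite set of values (the archimedean $\pi_\infty$ is the trivial-on-center representation on the compact group $G_\R$, so $\alpha^\natural_{\iota_0,E_\infty}$ is an explicit finite quantity, and at $p\in S_\cO$ the local étale algebra $\cE_p$ is fixed by the compatibility condition, so $\eta_{E,p}$ is one fixed character).

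**Assembling and the main obstacle.** Combining the three regimes gives $\bigl|\prod_v\alpha^\natural_{\iota_0,E_v}(\phi_v)\bigr|\le C'\,|\Delta_E|^{-1/2}$ with $C'$ independent of $E$, which is exactly the claim. The step I expect to be the main technical obstacle is the precise bookkeeping of the $p\mid\Delta_E$ factors: one must verify that for each such $p$ (away from $S_\cO$) the normalized local period is bounded by a constant times $|\Delta_E|_p^{-1/2}=p^{-\ord_p(\Delta_E)/2}$ uniformly — in particular that the bound does not blow up for the finitely many "bad" primes $2,3$ or for $E_p$ ramified versus split, and that the constant can be taken uniform over all $E$. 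This requires either invoking an explicit formula for $\alpha^\natural$ at a ramified quadratic twist of an unramified $\pi'_p$ (available in the literature on Waldspurger's local integrals) or, more elementarily, bounding $|\alpha_{\iota_0,E_v}(\phi_v)|$ by the volume of the relevant local torus orbit together with the trivial bound $|\langle\pi_v(t_v)\phi_v,\phi_v\rangle_v|\le\langle\phi_v,\phi_v\rangle_v$, then controlling the $L$-factor ratio; the adjoint $L$-value $L(1,\pi',\Ad)\ne0$ and the nonvanishing of $L(1,\eta_E)$ guarantee the denominators in the global normalization cause no trouble. Once the per-prime bound with a uniform constant is in hand, taking the product over $p\mid\Delta_E$ and multiplying by the fixed contribution from $S_\cO\cup\{\infty\}$ finishes the proof.
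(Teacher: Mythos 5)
Your high-level plan (reduce to a place-by-place analysis, show almost all normalized local periods are $1$, and bound the rest) is the right skeleton, but there are two genuine problems with the details, and the paper's proof circumvents both by a device you have not anticipated.

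First, the handling of places $v\in S_\cO$. You assert that ``at $p\in S_\cO$ the local \'etale algebra $\cE_p$ is fixed by the compatibility condition, so $\eta_{E,p}$ is one fixed character.'' No such condition is in force here: the lemma is stated for \emph{all} $E\in X(D)$, so $E_p$ ranges over every element of $X(D_p)$. More seriously, even after fixing the isomorphism class of $E_p$, the quantity $\alpha_{\iota_0,E_v}(\phi_v)$ depends on the $K_v$-conjugacy class of the embedding $\iota_0\otimes\mathrm{id}_{\Q_v}\colon E_v\hookrightarrow D_v$, and when $E_v$ is not a field (e.g.\ $v\mid\level(\cO)$ and $v$ split in $E$) the double-coset space $T_v\bs G_v/K_v$ is infinite, so there is no a priori finite list of values. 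The paper's resolution is the step you are missing: after picking $\delta_E$ locally at $v\notin S$, one uses the decomposition $G_\A = G_\Q\left(\prod_{v\in S,v<\infty}G_v\right)K$ to replace $\delta_E$ by a rational conjugate $\delta'_E=\gamma_E\delta_E\gamma_E^{-1}$ and force the conjugating element $g'_E$ at the $S$-places into the fixed compact set $\coprod_{i=1}^{h_\cO}x_iK$. This is the compactness argument that makes the $S$-contribution uniformly bounded, and without it the bound is not justified.

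Second, your attribution of the $|\Delta_E|^{-1/2}$ decay. You propose that the ramified primes $p\mid\Delta_E$ each contribute $\asymp p^{-\ord_p(\Delta_E)/2}$ and the archimedean factor stays bounded. In the paper's normalization of local Tamagawa measures this is not what happens: they choose $\delta_E$ to be $K_v$-conjugate to $\overline{\begin{psmallmatrix}0&\Delta_E\\1&0\end{psmallmatrix}}$ for every finite $v\notin S$, and with this specific choice $\alpha^\natural_{\iota_0,E_v}(\phi_v)=1$ for \emph{all} finite $v\notin S$, including the ramified ones; the full $|\Delta_E|^{-1/2}$ decay then sits at the archimedean place, where $\pi_\infty$ is trivial and $|\Delta_E|^{1/2}\alpha^\natural_{\iota_0,E_\infty}(\phi_\infty)$ is an $E$-independent constant. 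Your alternative bookkeeping could in principle be made rigorous with a different choice of embeddings and a careful tracking of how the local Tamagawa measures on $T_v$ scale with $\Delta_E$, but as written you neither specify that normalization nor verify that the per-prime constants are uniform in $p$ (you flag this yourself as the ``main obstacle'' but do not resolve it). The paper's route avoids the issue entirely by concentrating all the discriminant dependence at $\infty$ via the explicit $\delta_E$. In short: the overall strategy is correct in outline, but the uniformity over the finitely many bad places requires the strong-approximation compactness step, and the treatment of primes dividing $\Delta_E$ is left unjustified.
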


\begin{proof}
We take a sufficiently large finite set $S$ of places of $\Q$ so that 
\begin{itemize}
\item $S$ contains $S_\cO\cup\{2\}$;
\item $\phi_v$ is a spherical vector normalized so that $\langle \phi_v,  \phi_v \rangle_v=1$ for any $v\not\in S$;
\item $G_\A=G_\Q\left(\prod_{\substack{v\in S \\ v<\infty}}G_v\right)K$.
\end{itemize}
For each $E\in X(D)$,  we take $\delta_E\in D^\times$ with $\delta_E^2=\Delta_E$ so that it is $K_v$-conjugate to 
    $\overline{\begin{psmallmatrix}
    0 & \Delta_E \\
    1 & 0
    \end{psmallmatrix}}$ in $\PGL_2(F_v)\simeq G_v$
for all $v\not\in S$.
Then $E$ is identified with the subalgebra $\Q(\delta_E)$ of $D$.
Under this identification,  one can see that $\alpha^\natural_{\iota_0,  E_v}(\phi_v)=1$ for  all $v\not\in S$.
A simple calculation shows that $|\Delta_E|^{\frac12}\alpha^\natural_{\iota_0,  E_\infty}(\phi_\infty)$ is independent of $E$ since $\pi_\infty$ is the trivial representation.

It remains to show that $\prod_{\substack{v\in S \\ v<\infty}}\alpha^\natural_{\iota_0,  E_v}(\phi_v)$ is bounded.
For $v\in S\setminus\{\infty\}$ and $\cE_v\in X(D_v)$,  fix $\delta_{\cE_v}\in D^\times_v$ so that $\delta_{\cE_v}^2\in F^\times_v$ and $\cE_v\simeq\Q_v(\delta_{\cE_v})$.
When $E_v\simeq \cE_v$,   there exists $g_{E,  v}\in G_v$ such that $g_{E,  v}^{-1}\delta_Eg_{E,  v}=\delta_{\cE_v}$.
We may assume the representatives $x_1,  \ldots,  x_{h_\cO}$ of cosets in $G_\Q\bs G_\A/K$ are contained in $\prod_{\substack{v\in S \\ v<\infty}}G_v$.
Set $g_E=(g_{E,  v})_{v\in S\setminus\{\infty\}}\in\prod_{\substack{v\in S \\ v<\infty}}G_v$ and take $\gamma_E\in G_\Q$ so that $\gamma_E g_E\in\coprod_{i=1}^{h_\cO}x_iK$.
In particular $\gamma_E\in K_v$ for all $v\not\in S$. 
Hence we can replace $\delta_E$ with $\delta'_E:=\gamma_E\delta_E\gamma_E^{-1}$ and $g_E$ with $g'_E=(g'_{E,  v})_v:=\gamma_E g_E$ to obtain $g'^{-1}_{E,  v}\delta'_Eg'_{E,  v}=\delta_{\cE_v}$ for $v\in S$.
Since $g'_E$ is contained in a compact set $\coprod_{i=1}^{h_\cO}x_iK$,  we obtain $\prod_{\substack{v\in S \\ v<\infty}}\alpha^\natural_{\iota_0,  E_v}(\phi_v)\asymp\prod_{\substack{v\in S \\ v<\infty}}\alpha^\natural_{\cE_v}(\phi_v)$.
This completes the proof.
\end{proof}

We rewrite the toric period $\cP_{\iota_0,  E}(\phi)$ in terms of the counting measure.
For $m\in\Z$ with $m \equiv 0,  1 \pmod 4$,  set $\fo(m)=\Z[\frac{m+\sqrt{m}}{2}]$.
Then,  $\fo=\fo(m)$ is an order in $\Q(\sqrt{m})$ of discriminant $m$. 
An order in $E\in X$ is uniquely written as $\fo(k^2\Delta_E)$ with some $k\in\Z_{>0}$.
For a prime $p$,  set $\fo_p=\fo\otimes_\Z\Z_p$.

Let $\fo$ be an order in $E\in X(D)$ and $\Emb(\fo,  \cO)$ the set of $\iota\in\Emb(E,  D)$ satisfying $\iota(E)\cap \cO=\iota(\fo)$. 
Let $\Emb(\fo,  \cO)_{/\sim}$ denote the set of $\cO^\times$-conjugacy classes in $\Emb(\fo,  \cO)$.
For $\iota\in\Emb(\fo,  \cO)$,  we write its class in $\Emb(\fo,  \cO)_{/\sim}$ by $[\iota]$.
We similarly define $\Emb(\fo_v,  \cO_v)$,   $\Emb(\fo_v,  \cO_v)_{/\sim}$ and $[\iota_v]$ for each $v<\infty$ and $\iota_v\in\Emb(\fo_v,  \cO_v)$.
Suppose $\Emb(\fo,  \cO)\neq\emptyset$ and fix $\iota_0\in\Emb(\fo,  \cO)$.

Let $T=T_E$ be the subtorus of $G$ attached to $\iota_0(E)$.
Set $U_v=\overline{(\fo_v^\times)}\subset T_v$ for $v<\infty$ and $U_\infty=T_\infty$.
Then $U=U_\fo=\prod_v U_v$ is an open compact subgroup of $T_\A$.
Normalize the Haar measure on $T_\A$ by $\vol(U)=1$.
Set $h_\fo=|T_\Q\bs T_\A/U_\fo|$ and $u_\fo=|T_\Q\cap U_\fo|=\#\overline{(\fo^\times)}$.
Then $u_{\fo(-3)}=3$,  $u_{\fo(-4)}=2$ and $u_\fo=1$ otherwise.
Let $\Cl(\fo)$ denote the set of classes of fractional $\fo$-ideals $\fa\subset E$ which satisfies $\fa\otimes_\Z\Z_v=z_v\fo_v$ for each finite place $v$ with some $z_\fa=(z_v)_v\in\A_{E,  f}^\times$.
Then the map $[\fa]\mapsto z_\fa$ defines a well-defined bijection from $\Cl(\fo)$ to $T_\Q\bs T_\A/U$.
It induces a group structure on $\Cl(\fo)$ from that of $T_\Q\bs T_\A/U$.
The order $h_\fo$ of $\Cl(\fo)$ is called the class number of $\fo$.
If $\fo$ is the ring of integers of $E$ (i.e. $m=\Delta_E$),  write $\fo$,  $u_\fo$ and $h_\fo$ as $\fo_E$,  $u_E$ and $h_E$,  respectively.
We say that $E\in X(D)$ has an \textit{optimal embedding} with respect to $\cO$ if $\Emb(\fo_E,  \cO)\neq\emptyset$.

An embedding $\iota_0\in\Emb(\fo,  \cO)$ induces a map from $T_\Q\bs T_\A/U$ to $G_\Q\bs G_\A/K$. 
Hence we obtain a map $i_0\,\colon\Cl(\fo)\rightarrow\Cl(\cO)$ which makes the following diagram commutative: 
    \[
    \xymatrix{
    T_\Q\bs T_\A/U \ar[r] \ar_{\rotatebox{90}{$\sim$}}[d] 
    \ar@{}[dr]|\circlearrowleft
    & G_\Q\bs G_\A/K \ar^{\rotatebox{90}{$\sim$}}[d] \\
    \Cl(\fo) \ar_{i_0}[r]& \Cl(\cO).
    }
    \]
Since $\vol(T_\Q\bs T_\A)=\frac{h_\fo}{u_\fo}$ and the Tamagawa number of $T$ is $2$,  we have $\cP_{\iota_0,  E}(\phi)=\frac{2u_\fo}{h_\fo}\fP_{\iota_0, \fo}(\phi)$,  where 
\begin{equation}\label{eq:period}
    \fP_{\iota_0,  \fo}(\phi)
    :=\frac{1}{u_\fo}\sum_{[\fa]\in\Cl(\fo)}\phi(i_0([\fa]))
    =\frac{1}{u_\fo}\sum_{[I]\in\Cl(\cO)}\#\Big(i_0^{-1}([I])\Big)\cdot\phi([I]).
\end{equation}
We also call $\fP_{\iota_0,  \fo}(\phi)$ a toric period of $\phi$.
When $\fo=\fo_E$,  we often write $\fP_{\iota_0,  E}(\phi):=\fP_{\iota_0,  \fo_E}(\phi)$.
If $\phi$ is in $\cA_N(\cO)$,  then \eqref{eq:period} becomes
    \[
    \fP_{\iota_0,  \fo}(\phi)
    =\frac{1}{u_\fo}\sum_{[\cO']\in\Typ(\cO)}
    \#\Big(i_0^{-1}([\cO'])\Big)\cdot\phi([\cO']). 
    \]
Here,  by abuse of notation,  $i_0$ denotes the map $\Cl(\fo)\rightarrow\Typ(\cO)$ induced from $\iota_0$.


\section{Classical Waldspurger's lift}
\label{sec:Classical}

In this section,  we review the half-integral weight modular form $\cW(\phi)$ associated with $\phi$.
Following \cite{BSP1},  we call it the \textit{classical Waldspurger's lift} of $\phi$.
See also \cite[\S12--13]{Gross}.
The Fourier coefficients of $\cW(\phi)$ are closely related to toric periods $\fP_{\iota_0,  E}(\phi)$.

\subsection{Definition}
\label{subsec:Definition}

For a right fractional $\cO$-ideal $I$,  we put $L_I=\{v\in \Z+2\cO(I) \mid \Tr(v)=0\}$,  where $\Tr$ is the reduced trace on $D$.
Let $\theta_\cO$ be the ternary theta function on $\Cl(\cO)\times \fH$ given by
    \[
    \theta_\cO([I],  z)=\sum_{v\in L_I}q^{\Nm(v)},  \qquad q=e^{2\pi\sqrt{-1}z}, 
    \]
where $\fH$ is the upper half plane and $\Nm$ is the reduced norm on $D$.
The right hand depends only on $[I]$ and this is well-defined.
Moreover,  the right hand side depends only on $[\cO(I)]$,  thus $\theta_\cO$ factors through the natural surjection $\Cl(\cO)\times\fH \twoheadrightarrow \Typ(\cO)\times\fH$.
The classical Wldspurger's lift $\cW(\phi)$ of $\phi$ is a modular form of weight $\frac32$ given by the inner product
    \[
    \cW(\phi,  z)=(\phi,  \overline{\theta_\cO(- ,  z )}),  \qquad z\in\fH.
    \] 

The image of the map $\cW$ is studied in \cite[p.379]{BSP1} and \cite[Proposition 12.9]{Gross}.
Let $M$ be a square-free positive odd integer.
We detnote by $S^+_{3/2}(M)^\new$  the space of cuspidal newforms $g(z)=\sum_{n=1}^\infty a(n)q^n$ of weight $\frac32$ on $\Gamma_0(4M)$ with $a(n)=0$ unless $n\equiv 0,3 \pmod 4$.
Let $S_2^\new(M)$ be the space of cuspidal newforms of weight $2$ on $\Gamma_0(M)$.
For a Hecke eigenform $f\in S_2^\new(M)$,  let $S_{3/2}^+(M,  f)^\new$ be the subspace of $S_{3/2}^+(M)^\new$ of cusp forms whose Hecke eigenvalue at $p^2$ equals that of $f$ at $p$ for almost all primes $p\nmid 2M$.
Let $L(s,  f)$ denote the standard $L$-function of $f$.
According to \cite[Proposition 1]{Wal1} and \cite[Theorem 2]{Kohnen1},  we have the Shimura decomposition
    \[
    S^+_{3/2}(M)^\new=\bigoplus_{f\in S_2^\new(M)} S_{3/2}^+(M,  f)^\new,
    \]
where $f$ runs through Hecke eigenforms in $S_2^\new(M)$.

\begin{theorem}
Keep the above notation.

\begin{itemize}
\item[(1)] Suppose $\disc(\cO)$ is odd and square-free.
Then $\cW(\phi)$ belongs to $S_{3/2}^+(\disc(\cO))$.
\item[(2)] Let $M$ be a square-free positive odd integer.
The map $\phi\mapsto\cW(\phi)$ induces a surjective map
    \[
    \bigoplus_{\substack{(D,  \cO) \\ \disc(\cO)=M}}\cS^\new(\cO) 
    \ \twoheadrightarrow
    \bigoplus_{\substack{f\in S_2^\new(M) \\ L(1,  f)\neq0}} 
    S_{3/2}^+(M,  f)^\new.
    \]
Here,  $(D,  \cO)$ runs through pairs of a definite quaternion algebras $D$ over $\Q$ and Eichler orders $\cO$ in $D$ with $\disc(\cO)=M$ and $f$ runs through Hecke eigenforms in $S_2^\new(M)$ such that $L(1,  f)\neq0$. 
The kernel of this map is $\bigoplus_{\substack{(D,  \cO) \\ \disc(\cO)=M}}\cS_N^\new(\cO)^\perp$,  where $\cS_N^\new(\cO)^\perp$ is the orthogonal complement of $\cS_N^\new(\cO)$ in $\cS^\new(\cO)$.
\end{itemize}
\end{theorem}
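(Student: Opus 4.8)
The plan is to deduce both parts from Gross's theta-lift formalism \cite{Gross} and the explicit Waldspurger formula of B\"ocherer--Schulze-Pillot \cite{BSP1}; the excerpt itself refers to \cite[Proposition 12.9]{Gross} and \cite[p.\,379]{BSP1}, so the work is essentially organization and bookkeeping rather than new input.

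For part (1) I would argue place by place on the ternary lattices $L_I$ and then assemble, since $\cW(\phi,z)=\sum_{[I]}w(x_{[I]})^{-1}\phi([I])\,\theta_\cO([I],z)$ is a finite linear combination of the $\theta_\cO([I],\cdot)$. First, each $v\in L_I$ can be written uniquely modulo $\Z$ as $v=2\beta-\Tr(\beta)$ with $\beta\in\cO(I)$, and a one-line computation gives $\Nm(v)=4\Nm(\beta)-\Tr(\beta)^2$, which is minus the discriminant of the quadratic order $\Z[\beta]$ (or $0$ if $\beta\in\Q$); hence $\Nm(v)\equiv 0,3\pmod 4$, which is exactly the plus-space condition. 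Since $D$ is definite, $v\mapsto\Nm(v)$ on $L_I\cong\cO(I)/\Z$ is positive definite of rank three, so each $\theta_\cO([I],\cdot)$ is a holomorphic modular form of weight $\tfrac32$; its level I would read off place by place from $\cO(I)_p$, which (as $\disc(\cO)$ is odd and square-free) is $\M_2(\Z_p)$, an Eichler order of level $p$, or the maximal order of the division algebra, and check that it divides $4\disc(\cO)$. Cuspidality of $\cW(\phi)$ comes from $\phi$ being orthogonal to the constants, the constants being exactly the subspace of $\cA(\cO)$ accounting for the non-cuspidal part of the theta image. All of this is in \cite[\S12]{Gross}, which I would cite.

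For part (2) the skeleton is as follows. (i) $\cW$ is Hecke-equivariant, intertwining $T_p$ on $\cA(\cO)$ with $T_{p^2}$ on $S_{3/2}^+(M)$ for $p\nmid 2M$ (Eichler commutation relations, \cite[\S12]{Gross}), so it respects the Shimura decomposition $S_{3/2}^+(M)^\new=\bigoplus_f S_{3/2}^+(M,f)^\new$. (ii) A Hecke eigenform $\phi\in\cS^\new(\cO)$ transfers via Jacquet--Langlands to a newform $f\in S_2^\new(M)$ --- the transfer exists because $\pi'_{f,v}$ is square-integrable at each $v\mid\disc(D)$, being Steinberg up to an unramified quadratic twist --- and one checks $\cW(\phi)\in S_{3/2}^+(M,f)^\new$. (iii) Since $\theta_\cO([I],\cdot)$ factors through $\Typ(\cO)$, the form $\cW(\phi)$ depends only on the image of $\phi$ in $\cA_N(\cO)$; hence $\cS_N^\new(\cO)^\perp\subseteq\ker\cW$. (iv) Conversely, the explicit Fourier-coefficient formula of \cite{BSP1} expresses $|a_{\cW(\phi)}(|\Delta_E|)|^2$ through the twisted central value $L(\tfrac12,\pi'\otimes\eta_E)$ (and local terms), which is nonzero for some $E$ whenever $L(\tfrac12,\pi')=L(1,f)\ne 0$ --- equivalently $\cP_E\not\equiv 0$ on $\pi$ by \cref{thm:Waldspurger} together with the non-vanishing of quadratic twists of $L(s,\pi')$ --- so $\cW$ is nonzero on the $f$-part of $\cS_N^\new(\cO)$ for such $f$. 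Combining (iii) and (iv) identifies $\ker\cW$ and places the image inside the stated target. (v) Surjectivity then follows from a dimension count: one matches $\sum_{(D,\cO):\,\disc(\cO)=M}\dim\cS_N^\new(\cO)$, obtained from the Eichler mass and type-number formulas together with Jacquet--Langlands, against $\sum_{f:\,L(1,f)\ne 0}\dim S_{3/2}^+(M,f)^\new$, which is Kohnen's dimension formula for the newform plus-space \cite{Kohnen1}.

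The main obstacle is step (v): one must track, simultaneously over \emph{all} pairs $(D,\cO)$ with $\disc(\cO)=M$, how the local normalizer (extended Atkin--Lehner) eigenvalues and the new-vector conditions at the primes dividing $M$ distribute the newforms $f$ among the spaces $\cS_N^\new(\cO)$, and reconcile this with the half-integral weight side. This is the delicate bookkeeping carried out in \cite{BSP1} and \cite[\S12--13]{Gross}; I would follow their treatment rather than reprove it.
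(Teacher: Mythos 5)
The paper itself does not prove this theorem: it is stated immediately after the sentence ``The image of the map $\cW$ is studied in \cite[p.379]{BSP1} and \cite[Proposition 12.9]{Gross},'' with no \emph{proof} environment following, so the result is being quoted from those references. Your sketch follows exactly those two sources and correctly organizes the argument: the plus-space congruence $\Nm(v)=4\Nm(\beta)-\Tr(\beta)^2\equiv 0,3\ (\mathrm{mod}\ 4)$ for part (1), and for part (2) the Hecke-equivariance, the factorization of $\theta_\cO$ through $\Typ(\cO)$ (which gives $\cS_N^\new(\cO)^\perp\subseteq\ker\cW$ after noting that an element of $\cS^\new(\cO)$ orthogonal to $\cS_N^\new(\cO)$ is in fact orthogonal to all of $\cA_N(\cO)$), and the B\"ocherer--Schulze-Pillot formula for non-vanishing.

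One caution on your step (v): the phrase ``dimension count'' is a bit optimistic as stated. What the cited sources really use is Kohnen's multiplicity one for the plus space (\cite[Theorem 2]{Kohnen1} gives a Hecke isomorphism $S_{3/2}^+(M)^\new\simeq S_2^\new(M)$, so each $S_{3/2}^+(M,f)^\new$ is exactly one-dimensional), combined with the non-vanishing from your step (iv) to conclude that $\cW$ is nonzero on the one-dimensional $f$-component of $\cS_N^\new(\cO)$ whenever $L(1,f)\neq 0$. A raw dimension comparison $\sum_{(D,\cO)}\dim\cS_N^\new(\cO)$ versus $\sum_{L(1,f)\neq 0}\dim S_{3/2}^+(M,f)^\new$ does not obviously balance: the left side counts all $f$ with $\varepsilon(f)=+1$, whereas the right side restricts further to $L(1,f)\neq 0$, and any $f$ with $\varepsilon(f)=+1$, $L(1,f)=0$ contributes an extra line of kernel (this is in fact implicit in your (iv)). This is exactly why the theorem's kernel description, as literally quoted from the sources, has to be read together with the $L(1,f)\neq 0$ restriction in the target. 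You correctly flag the AL/sign bookkeeping as the real content, and it is; just be aware that the clean way to close (v) is multiplicity one plus (iv), not matching two Eisenstein-type dimension formulas.
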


The argument of \cite[Proposition 12.9]{Gross} shows that the Fourier expansion of $\cW(\phi)$ becomes 
\begin{equation}\label{eq:Fourier}
    \cW(\phi,  z)=\sum_{\substack{n\in\Z_{>0} \\ n\equiv 0,  3 
    \hspace{-.6em}\pmod 4}}\left(\sum_{[I]\in\Cl(\cO)}
    \sum_{\fo(-n)\subset\fo} u_\fo^{-1}\#\Big(\Emb(\fo,  \cO(I))_{/\sim}\Big) 
     \phi([I])\right) q^n.
\end{equation}
Here,  the inner-most sum is over orders in $\Q(\sqrt{-n})$ which contain $\fo(-n)$.
Let $a_\phi(n)$ denote the $n$-th Fourier coefficient of $\cW(\phi)$.
For an imaginary quadratic field $E\in X$,  we have
\begin{equation}\label{eq:Fourier-2}
    a_\phi(|\Delta_E|)=\frac{1}{u_E}\sum_{[I]\in\Cl(\cO)}
     \#\Big(\Emb(\fo_E,  \cO(I))_{/\sim}\Big)\phi([I]).
\end{equation}
The goal of this section is to relate $a_\phi(|\Delta_E|)$ to the toric period $\fP_{\iota_0,  E}(\phi)$ for $E\in X(D)$.

\subsection{Action of ideal class groups}
\label{subsec:Action}

Following \cite[Theorem 30.4.7 and Corollary 30.4.23]{Voight},  we introduce a group action of $\Cl(\fo)$ on $\coprod_{[I]\in\Cl(\cO)}\Emb(\fo,  \cO(I))_{/\sim}$.
It provides a useful description of the set $i_0^{-1}([I])$ which appears in \eqref{eq:period}.

First we need to clarify the notation $\Emb(\fo,  \cO(I))_{/\sim}$ for a right class $[I]$.
For right fractional $\cO$-ideals $I,  J\subset D$ satisfying $J=\alpha I$ with $\alpha\in D^\times$,  the conjugation by $\alpha$ defines a bijection from $\Emb(\fo,  \cO(I))$ to $\Emb(\fo,  \cO(J))$.
Then we identify $\Emb(\fo,  \cO(I))_{/\sim}$ with $\Emb(\fo,  \cO(J))_{/\sim}$ under the bijection induced from it.
Note that this identification does not depend on the choice of $\alpha$.
Hence one can speak of $\Emb(\fo,  \cO(I))_{/\sim}$ for $[I]\in\Cl(\cO)$\footnote{Another way to define $\coprod_{[I]\in\Cl(\cO)}\Emb(\fo,  \cO(I))_{/\sim}$ is to consider the $D^\times$-conjugate action on the set $\coprod_I \Emb(\fo,  \cO(I))_{/\sim}$,  where $I$ runs over all right fractional $\cO$-ideals.}.

Assume that $\Emb(\fo,  \cO)\neq\emptyset$ and fix $\iota_0\in\Emb(\fo,  \cO)$.
Set $B(\cO)_\Q=\{g\in G_\Q \mid g^{-1}\cdot\iota_0\in\Emb(\fo,  \cO)\}$.
Here,  $\cdot$ denotes the conjugate action.
Similarly,  set $B_p(\cO_p)=\{g\in G_p \mid g^{-1}\cdot(\iota_{0}\otimes id_{\Q_p})\in\Emb(\fo_p,  \cO_p)\}$ for each prime $p$ and $B(\cO)_\A=\prod_p B_p(\cO_p)\times G_\R$,  where $p$ runs over all primes. 
For $[I]\in\Cl(\cO)$,  we write the corresponding coset as $G_\Q x_{[I]}K$.
Then one sees that $(B(\cO)_\A\cap G_\Q x_{[I]}K)/K \simeq B(\cO(I))_\Q/\overline{\cO(I)^\times}$.
Here,  $B(\cO(I))_\Q/\overline{\cO(I)^\times}$ and $B(\cO(J))_\Q/\overline{\cO(J)^\times}$ are identified with each other by right multiplication with $\alpha\in D^\times$ if $J=\alpha I$.
Hence we get a bijection from $T_\Q\bs B(\cO)_\A/K$ to $\coprod_{[I]\in\Cl(\cO)}T_\Q\bs B(\cO(I))_\Q/\overline{\cO(I)^\times}$.
Since the map sending $g\in B(\cO(I))_\Q$ to 
    \[
    \xymatrix{T_\Q\bs T_\A/U \times T_\A\bs B(\cO)_\A/K \ar^(0.6){\sim}[r]
     \ar_{\rotatebox{90}{$\sim$}}[d]
    &T_\Q\bs B(\cO)_\A/K \ar^(0.35){\sim}[r]
    &\ds\coprod_{[I]\in\Cl(\cO)}T_\Q\bs B(\cO(I))_\Q/\overline{\cO(I)^\times}
      \ar^(0.55){\rotatebox{90}{$\sim$}}[d] \\
    \Cl(\fo)\times\ds\prod_p\Emb(\fo_p,  \cO_p)_{/\sim} 
    &
    & \ds\coprod_{[I]\in\Cl(\cO)}\Emb(\fo,  \cO(I))_{/\sim}.
    }
    \]
$g^{-1}\cdot\iota_0$ descends to a bijection from $T_\Q\bs B(\cO(I))_\Q/\overline{\cO(I)^\times}$ to $\Emb(\fo,  \cO(I))_{/\sim}$,  the left multiplication of $T_\Q\bs T_\A/U$ on $T_\Q\bs B(\cO)_\A/K$ defines an action of $\Cl(\fo)$ on $\coprod_{[I]\in\Cl(\cO)}\Emb(\fo,  \cO(I))_{/\sim}$.
This action is fixed-point free and hence each orbit has $h_\fo$ elements. 
We write this action as $[\fa]\ast[\iota]$ for $[\fa]\in\Cl(\fo)$ and $[\iota]\in\Emb(\fo,  \cO(I))_{/\sim}$.
The leftmost horizontal arrow in the above diagram is a bijection induced from this action.
Note that it is not canonical and depends on the choice of orbit representatives. 

Let $[\iota_0]$ denote the class in $\Emb(\fo,  \cO(I_1))_{/\sim}$ associated with $\iota_0$,  where $[I_1]\in\Cl(\cO)$ is the trivial class.
It is easy to check that 
\begin{equation}\label{eq:action}
    i_0^{-1}([I])=\{[\fa]\in\Cl(\fo) \mid [\fa]\ast[\iota_0]\in\Emb(\fo,  \cO(I))_{/\sim}\}
\end{equation}
for $[I]\in\Cl(\cO)$.
In particular,  $\Emb(\fo,  \cO(I))\neq\emptyset$ if $i_0^{-1}([I])\neq\emptyset$.
One sees that the toric period $\fP_{\iota_0,  \fo}(\phi)$ depends only on the $\Cl(\fo)$-orbit of $[\iota_0]$.

Note that the map sending $g\in B_p(\cO_p)$ to $g^{-1}\cdot(\iota_0\otimes id_{\Q_p})$ induces a bijection from $T_p\bs B_p(\cO_p)/K_p$ to $\Emb(\fo_p, \cO_p)_{/\sim}$.
Hence $[\iota]$,  $[\iota']\in\coprod_{[I]\in\Cl(\cO)}\Emb(\fo,  \cO(I))_{/\sim}$ are in the same $\Cl(\fo)$-orbit if and only if $[\iota\otimes id_{\Q_p}]=[\iota'\otimes id_{\Q_p}]$ in $\Emb(\fo_p,  \cO_p)_{/\sim}$ for all primes $p$.
In particular,  the number of $\Cl(\fo)$-orbits is $|T_\A\bs B(\cO)_\A/K|=\prod_p |T_p\bs B_p(\cO_p)/K_p|=\prod_p\#\Big(\Emb(\fo_p, \cO_p)_{/\sim}\Big)$.

The local embedding number $\#\Big(\Emb(\fo_p, \cO_p)_{/\sim}\Big)$ is closely studied in \cite[\S\,30.5 and \S\,30.6]{Voight}.
We need the following result for the case where $\fo=\fo_E$ is the ring of integers of $E\in X(D)$.

\begin{lemma}\label{lem:local_emb}
For $E\in X(D)$ and its ring of integers $\fo_E$,  
    \[
    \#\Big(\Emb(\fo_{E,  p},  \cO_p)_{/\sim}\Big)=
        \begin{cases}
        1 & \text{if $p \nmid \disc(\cO)=\disc(D)\level(\cO)$,} \\
        1-\left(\frac{\Delta_E}{p}\right) & \text{if $p \mid \disc(D)$,}\\
        1+\left(\frac{\Delta_E}{p}\right) & \text{if $p \mid \level(\cO)$.}
        \end{cases}
    \]
When $\Emb(\fo_{E,  p},  \cO_p)_{/\sim}\simeq T_p\bs B_p(\cO_p)/K_p$ has two elements,  the non-trivial one is given as follows:
\begin{itemize}
\item[(1)] If $p \mid \disc(D)$ and $p$ is inert and unramified in $E$,  then the non-trivial coset in $T_p\bs B(\cO_p)/K_p$ is represented by a uniformizing element of $D_p$,  \textit{i.e.}\,an element $\varpi_p\in D_p^\times$ such that $\Nm_p(\varpi_p)\in p\Z_p^\times$,  where $\Nm_p$ is the reduced norm on $D_p$.
\item[(2)] If $p \mid \level(\cO)$ and $p$ splits in $E$,  then the non-trivial coset in $T_p\bs B(\cO_p)/K_p$ is  
    $T_p\begin{psmallmatrix}
    0 & 1 \\
    p & 0
    \end{psmallmatrix}K_p$.
\end{itemize}
In particular,  
    \[
    |T_\A\bs B(\cO)_\A/K|=
    \prod_{p \mid \disc(D)}\left(1-\left(\frac{\Delta_E}{p}\right)\right)
   \prod_{p \mid \level(\cO)}\left(1+\left(\frac{\Delta_E}{p}\right)\right),  
    \]
which is a power of $2$ and $|T_p\bs B_p(\cO_p)/N_p|=|T_\A\bs B(\cO)_\A/N|=1$.
\end{lemma}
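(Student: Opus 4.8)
The plan is to reduce every assertion to a place-by-place computation. Recall from the discussion preceding the lemma that the assignment $g\mapsto[g^{-1}\cdot\iota_0]$ identifies $T_p\bs B_p(\cO_p)/K_p$ with $\Emb(\fo_{E,p},\cO_p)_{/\sim}$, and that $|T_\A\bs B(\cO)_\A/K|=\prod_p|T_p\bs B_p(\cO_p)/K_p|$; the same reasoning (all of $B(\cO)_\A$, $T_\A$ and $N$ are restricted products over places with respect to the subgroups $K_p$, and $N_p=K_p$ for $p\nmid\disc(\cO)$) gives $|T_\A\bs B(\cO)_\A/N|=\prod_p|T_p\bs B_p(\cO_p)/N_p|$. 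So it suffices to (i) compute $\#\bigl(\Emb(\fo_{E,p},\cO_p)_{/\sim}\bigr)$ for each $p$, (ii) exhibit a representative of the nontrivial class whenever this number equals $2$, and (iii) check that $|T_p\bs B_p(\cO_p)/N_p|=1$ for every $p$.

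Step (i) is Eichler's theory of optimal embeddings, for which I would cite \cite[\S\,30.5 and \S\,30.6]{Voight}. If $p\nmid\disc(\cO)$ then $\cO_p\simeq\M_2(\Z_p)$ is maximal in $\M_2(\Q_p)$, and a quadratic \'etale algebra has, up to $\cO_p^\times$-conjugacy, a unique optimal embedding, so the count is $1$. If $p\mid\disc(D)$ then $D_p$ is the division quaternion algebra and $\cO_p$ is its unique maximal order; since $D_p$ has no zero divisors, $E_p$ is a field, so $\bigl(\tfrac{\Delta_E}{p}\bigr)\in\{-1,0\}$, and Eichler's formula gives the count $1-\bigl(\tfrac{\Delta_E}{p}\bigr)\in\{2,1\}$. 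If $p\mid\level(\cO)$ then $\cO_p$ is an Eichler order of level $p$ in $\M_2(\Q_p)$ and the count is $1+\bigl(\tfrac{\Delta_E}{p}\bigr)$. Throughout, the symbol is the Kronecker symbol recording the splitting of $p$ in $E$ ($+1$ split, $-1$ inert, $0$ ramified), which is still the right object at $p=2$ because $\Delta_E$ is a fundamental discriminant.

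For step (ii) I would work in explicit models. Suppose $p\mid\disc(D)$ and $p$ is inert and unramified in $E$: write $D_p=E_p\oplus E_p\varpi$ with $\varpi x\varpi^{-1}=\sigma(x)$ for $x\in E_p$, where $\sigma$ is the nontrivial automorphism of $E_p/\Q_p$, and $\varpi^2\in p\Z_p^\times$; then $\cO_p=\fo_{E,p}\oplus\fo_{E,p}\varpi$ and $\iota_0$ is the inclusion. Since $\cO_p$ is the unique maximal order, $\varpi\cO_p\varpi^{-1}=\cO_p$, so a direct check shows $\varpi\in B_p(\cO_p)$ and $\varpi^{-1}\cdot\iota_0=\iota_0\circ\sigma$. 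This is \emph{not} $\cO_p^\times$-conjugate to $\iota_0$, i.e. $\varpi\notin T_pK_p$: the valuation of $\Nm_{D_p}(\varpi)$ is odd, whereas every element of $T_pK_p=\overline{E_p^\times}\cdot\overline{\cO_p^\times}$ has reduced norm of even valuation (one uses that $E_p/\Q_p$ is unramified so $\Nm_{E_p/\Q_p}$ takes values in $p^{2\Z}\Z_p^\times$, and that passing to $G_p=D_p^\times/\Q_p^\times$ changes valuations only by even amounts). Hence $T_p\varpi K_p$ is the nontrivial class; since any two uniformizers of $D_p$ differ by an element of $\cO_p^\times$, any uniformizer represents it. Suppose instead $p\mid\level(\cO)$ and $p$ splits in $E$: take $\cO_p=\begin{psmallmatrix}\Z_p&\Z_p\\p\Z_p&\Z_p\end{psmallmatrix}$, $E_p=\Q_p\times\Q_p$, $\iota_0(a,b)=\begin{psmallmatrix}a&0\\0&b\end{psmallmatrix}$, and $w=\begin{psmallmatrix}0&1\\p&0\end{psmallmatrix}$. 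A short computation gives $w^{-1}\iota_0(a,b)w=\begin{psmallmatrix}b&0\\0&a\end{psmallmatrix}$, so the image is again the diagonal torus and $w\in B_p(\cO_p)$; this embedding is not $\cO_p^\times$-conjugate to $\iota_0$, because a unit carrying one of these diagonal embeddings into the other would have to be anti-diagonal, and no anti-diagonal matrix lies in $\cO_p^\times$ (its lower-left entry would be a unit, not in $p\Z_p$). Hence $T_pwK_p$ is the nontrivial class. When the count equals $1$ — $p$ ramified in $E$, or $p\nmid\disc(\cO)$ — there is a single class and nothing to exhibit.

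For step (iii): when $p\nmid\disc(\cO)$ we have $N_p=K_p$ and one class, so $|T_p\bs B_p(\cO_p)/N_p|=1$; when $p\mid\disc(\cO)$ and $p$ ramifies in $E$ the count is already $1$; and in the two ``count $=2$'' cases the representative from step (ii) — $\varpi$, resp.\ $w$ — has reduced norm, resp.\ determinant, of odd valuation, hence its image generates $N_p/K_p\simeq\Z/2\Z$, so the two $K_p$-cosets collapse to a single $N_p$-coset. Thus $|T_p\bs B_p(\cO_p)/N_p|=1$ for all $p$, and $|T_\A\bs B(\cO)_\A/N|=\prod_p1=1$. Multiplying the local counts from step (i) over all $p$ yields the stated product for $|T_\A\bs B(\cO)_\A/K|$, and since $\Emb(\fo_E,\cO)\neq\emptyset$ forces every local factor to be $\geq1$, hence $1$ or $2$, the product is a power of $2$. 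I expect the only genuine work to be the non-conjugacy checks in step (ii): tracking the reduction modulo the center $\Q_p^\times$ (scalars act trivially by conjugation, so ``$\cO_p^\times$-conjugacy'' and ``$K_p$-conjugacy'' agree), verifying that an \emph{arbitrary} uniformizer of $D_p$ works rather than one adapted to a chosen splitting, and matching the normalization conventions in Eichler's embedding-number formulas as recorded in \cite{Voight}.
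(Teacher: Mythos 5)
Your proposal is correct and takes essentially the same route as the paper: the paper states this lemma without a written proof, relying on the very citation to \cite[\S\,30.5 and \S\,30.6]{Voight} you invoke for the local embedding counts, with the explicit representatives and the $N_p$-statement left as the standard local checks you carry out (the uniformizer $\varpi_p$ and the Atkin--Lehner element $\begin{psmallmatrix}0&1\\p&0\end{psmallmatrix}$, non-conjugacy via the parity of the valuation of the reduced norm, and the collapse of the two $K_p$-cosets into one $N_p$-coset since these representatives generate $N_p/K_p\simeq\Z/2\Z$). Your use of the product decompositions $|T_\A\bs B(\cO)_\A/K|=\prod_p|T_p\bs B_p(\cO_p)/K_p|$ from \cref{subsec:Action} and the nonemptiness of the local sets to conclude the power-of-$2$ statement is exactly as intended, so nothing further is needed.
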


\begin{remark}\label{rem:dependence}
Suppose $\iota_1,  \iota_2\in\Emb(\fo_E,  \cO)$ and let $i_j\,\colon \Cl(\fo_E)\rightarrow \Typ(\cO)$ be the induced map. 
Then we have $i_1=i_2$ by \cref{lem:local_emb}.
In particular,  $\fP_{\iota_1,  E}(\phi)=\fP_{\iota_2,  E}(\phi)$ for $\phi\in\cA_N(\cO)$.
\end{remark}

\subsection{Fourier coefficients}
\label{subsec:Fourier}

From now till the end of this section,  suppose that $\phi\in\cS^\new(\cO)$ is a Hecke eigenform.
Let $\pi=\otimes_v \pi_v$ be the irreducible cuspidal automorphic representation of $G_\A$ generated by $\phi$.
The \textit{Atkin-Lehner sign} $\AL(\pi_p)$ of $\pi_p$ for a prime $p$ is given as
    \[
    \AL(\pi_p)=
        \begin{cases}
        \varepsilon(\pi_p) & \text{if $p \nmid \disc(D)$,} \\
        \chi_p(p) &\text{if $p \mid \disc(D)$ and $\pi_p=\chi_p\circ\Nm_p$,}
        \end{cases}
    \]
where $\varepsilon(\pi_p)$ is the root number of $\pi_p$,  $\chi_p$ is an unramified character of $\Q_p^\times$ and $\Nm_p$ is the reduced norm on $D_p$.
Note that $\pi_p$ is a representation of $\PGL_2(\Q_p)$ if $p \nmid \disc(D)$.
We fix $E\in X(D)$ and $\iota_0\in\Emb(\fo_E,  \cO)$,  assuming that $E$ has an optimal embedding with respect to $\cO$.
Let $[\iota_0]\in\Emb(\fo_E,  \cO(I_1))_{/\sim}$ be the class of $\iota_0$,  where $[I_1]\in\Cl(\cO)$ is the trivial class.

\begin{definition}
\begin{itemize}
\item[(1)] For $[\iota],  [\iota']\in \coprod_{[I]\in\Cl(\cO)}\Emb(\fo_E,  \cO(I))_{/\sim}$,  we set
    \[
    \AL_\pi([\iota],  [\iota'])=\prod_{p}\AL_{\pi_p}([\iota],  [\iota']).
    \]
Here,  $p$ runs over all primes and 
    \[
    \AL_{\pi_p}([\iota],  [\iota']):=
        \begin{cases}
        \AL(\pi_p) & \text{if $[\iota\otimes id_{\Q_p}] \neq 
        [\iota'\otimes id_{\Q_p}]$,} \\
        1 & \textit{otherwise.}
        \end{cases}
    \]
Note that $\AL_\pi([\iota],  [\iota'])$ depends only on the $\Cl(\fo_E)$-orbits of $[\iota]$ and $[\iota']$.

\item[(2)] We put 
    \[
    c_\pi(E)=\sum_{j=1}^{|T_\A\bs B(\cO)_\A/K|} 
    \AL_\pi([\iota_0],  [\iota_j]), 
    \]
where $\{[\iota_j]\}_j$ is a set of representatives of $\Cl(\fo_E)$-orbits in $\coprod_{[I]\in\Cl(\cO)}\Emb(\fo_E,  \cO(I))_{/\sim}$.
\end{itemize}
\end{definition}

We get an equality between the Fourier coefficient $a_\phi(|\Delta_E|)$ and the toric period $\fP_{\iota_0,  E}(\phi)$.

\begin{theorem}\label{thm:Fourier}
Let $\phi\in\cS^\new(\cO)$ be a Hecke eigenform and $\pi=\otimes_v \pi_v$ the irreducible cuspidal automorphic representation of $G_\A$ generated by $\phi$.
Take $E\in X(D)$ with $\Emb(\fo_E,  \cO)\neq\emptyset$.
\begin{itemize}
\item[(1)] The $|\Delta_E|$-th Fourier coefficient of $\cW(\phi)$ satisfies $a_\phi(|\Delta_E|)=c_\pi(E) \cdot \fP_{\iota_0,  E}(\phi)$.
\item[(2)] Let $S_\pi^{\pm}(E)$ be the set of primes $p \mid \disc(\cO)$ such that $\#\Big(\Emb(\fo_{E,  p},  \cO_p)_{/\sim}\Big)=2$ and $\AL(\pi_p)=\pm1$.
Then
    \[
    c_\pi(E)=
        \begin{cases}
        |T_\A\bs B(\cO)_\A/K| & \text{if $S_\pi^-(E)=\emptyset$, } \\
        0 & \text{otherwise}.
        \end{cases}
    \]
In particular,  if $\phi\in\cS_N^\new(\cO)$,  then $c_\pi(E)\neq0$ for any $E\in X(D)$ with $\Emb(\fo_E,  \cO)\neq\emptyset$.

\if0
For $\phi\in\cS_N^\new(\cO)$,  we have $c_\pi(E)=|T_\A\bs B(\cO)_\A/K|$ and
    \[
    a_\phi(|\Delta_E|)=\fP_E(\phi)\prod_{p\mid\disc(\cO)}
    \left(1+\varepsilon(\pi'_p; E_p)\left(\frac{E}{p}\right)\right).
    \]
\fi
\end{itemize}
\end{theorem}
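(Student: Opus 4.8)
The plan is to begin from the Fourier-coefficient identity \eqref{eq:Fourier-2} and to regroup the right-hand side according to the free action of $\Cl(\fo_E)$ on $\coprod_{[I]\in\Cl(\cO)}\Emb(\fo_E,\cO(I))_{/\sim}$ constructed in \cref{subsec:Action}. Let $\mathrm{pr}_\cO\colon\coprod_{[I]}\Emb(\fo_E,\cO(I))_{/\sim}\to\Cl(\cO)$ be the map sending a class in $\Emb(\fo_E,\cO(I))_{/\sim}$ to $[I]$, and let $\{[\iota_j]\}_j$ be a set of orbit representatives with $[\iota_1]=[\iota_0]$. Breaking each fibre of $\mathrm{pr}_\cO$ into orbit-pieces and then summing first over an orbit and then over $[I]$ gives
\[
a_\phi(|\Delta_E|)=\frac1{u_E}\sum_{[I]\in\Cl(\cO)}\#\Emb(\fo_E,\cO(I))_{/\sim}\cdot\phi([I])=\frac1{u_E}\sum_{j}\ \sum_{[\fa]\in\Cl(\fo_E)}\phi\bigl(\mathrm{pr}_\cO([\fa]\ast[\iota_j])\bigr),
\]
and by \eqref{eq:action} and \eqref{eq:period} the $j=1$ summand is exactly $u_E\,\fP_{\iota_0,E}(\phi)$. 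So (1) reduces to the claim that, for every orbit,
\[
\sum_{[\fa]\in\Cl(\fo_E)}\phi\bigl(\mathrm{pr}_\cO([\fa]\ast[\iota_j])\bigr)=\AL_\pi([\iota_0],[\iota_j])\cdot u_E\,\fP_{\iota_0,E}(\phi),
\]
after which one sums over $j$ and uses the definition of $c_\pi(E)$.

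To prove this per-orbit identity I would transport everything into the uniformized model $T_\Q\bs B(\cO)_\A/K$ of $\coprod_{[I]}\Emb(\fo_E,\cO(I))_{/\sim}$ furnished by the diagram in \cref{subsec:Action}: there $\mathrm{pr}_\cO$ followed by evaluation of $\phi$ becomes $T_\Q bK\mapsto\phi(b)$, the class $[\iota_0]$ is the identity coset, and $\Cl(\fo_E)=T_\Q\bs T_\A/U$ acts by left translation. Let $P_j$ be the finite set of primes at which $[\iota_j\otimes id_{\Q_p}]\neq[\iota_0\otimes id_{\Q_p}]$; necessarily $P_j\subseteq\{p\mid\disc(\cO):\#\Emb(\fo_{E,p},\cO_p)_{/\sim}=2\}$. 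By \cref{lem:local_emb} the nontrivial local coset at $p\in P_j$ is represented by the Atkin--Lehner element $w_p\in N_p$ — a uniformizer $\varpi_p\in D_p^\times$ when $p\mid\disc(D)$ and $w_p=\overline{\begin{psmallmatrix}0&1\\p&0\end{psmallmatrix}}$ when $p\mid\level(\cO)$ — and each $w_p$ normalizes $K_p$, so $w:=\prod_{p\in P_j}w_p$ lies in $B(\cO)_\A$, represents the orbit of $[\iota_j]$, and $R_G(w)\phi$ again lies in $\cA(\cO)$. Since the action is free, $[\fa]\mapsto T_\Q z_\fa wK$ is a bijection onto the orbit of $[\iota_j]$, where $[\fa]\leftrightarrow z_\fa$ is the standard bijection $\Cl(\fo_E)\xrightarrow{\ \sim\ }T_\Q\bs T_\A/U$; hence
\[
\sum_{[\fa]\in\Cl(\fo_E)}\phi\bigl(\mathrm{pr}_\cO([\fa]\ast[\iota_j])\bigr)=\sum_{[t]\in T_\Q\bs T_\A/U}\bigl(R_G(w)\phi\bigr)(t)=u_E\,\fP_{\iota_0,E}\bigl(R_G(w)\phi\bigr).
\]
It then remains to identify $R_G(w)\phi$: because $\phi$ is a Hecke eigenform generating $\pi=\otimes_v\pi_v$ and $\phi_p$ spans the line $\pi_p^{K_p}$ (using $\phi\in\cS^\new(\cO)$), the operator $R_G(w_p)$ multiplies $\phi$ by the local Atkin--Lehner eigenvalue of $\pi_p$, which is $\chi_p(p)=\AL(\pi_p)$ when $p\mid\disc(D)$ and $\pi_p=\chi_p\circ\Nm_p$, and equals $\varepsilon(\pi_p)=\AL(\pi_p)$ when $p\mid\level(\cO)$. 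Multiplying over $p\in P_j$ yields $R_G(w)\phi=\AL_\pi([\iota_0],[\iota_j])\,\phi$, which gives the per-orbit identity and hence (1). I expect the main obstacles to be the bookkeeping in the uniformized picture — in particular verifying that $w$ represents the correct orbit and that, after projecting to $\Cl(\cO)$, left translation by the torus turns into right translation by $w$ — and the normalization matching the local Atkin--Lehner eigenvalue of the new vector with $\varepsilon(\pi_p)$ for $p\mid\level(\cO)$ (and with $\chi_p(p)$ for $p\mid\disc(D)$).

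For (2), set $R=\{p\mid\disc(\cO):\#\Emb(\fo_{E,p},\cO_p)_{/\sim}=2\}$. By \cref{lem:local_emb}, together with $\Emb(\fo_E,\cO)\neq\emptyset$, the set $R$ consists of the primes $p\mid\disc(D)$ inert in $E$ and the primes $p\mid\level(\cO)$ split in $E$, and $|T_\A\bs B(\cO)_\A/K|=2^{|R|}$. The $\Cl(\fo_E)$-orbits are indexed by the subsets $P\subseteq R$, the orbit attached to $P$ being the one whose $p$-th local class is nontrivial precisely for $p\in P$, and for that orbit $\AL_\pi([\iota_0],[\iota_P])=\prod_{p\in P}\AL(\pi_p)$. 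Hence
\[
c_\pi(E)=\sum_{P\subseteq R}\ \prod_{p\in P}\AL(\pi_p)=\prod_{p\in R}\bigl(1+\AL(\pi_p)\bigr),
\]
which is $0$ as soon as some $p\in R$ has $\AL(\pi_p)=-1$, i.e.\ $S_\pi^-(E)\neq\emptyset$, and equals $2^{|R|}=|T_\A\bs B(\cO)_\A/K|$ otherwise. Finally, if $\phi\in\cS_N^\new(\cO)$ then $\phi$ is right $N_\cO$-invariant, so $R_G(w_p)\phi=\phi$ for every $p\mid\disc(\cO)$; by the eigenvalue computation above this forces $\AL(\pi_p)=1$ for all such $p$, whence $S_\pi^-(E)=\emptyset$ and $c_\pi(E)=|T_\A\bs B(\cO)_\A/K|\neq0$ for every $E\in X(D)$ admitting an optimal embedding with respect to $\cO$.
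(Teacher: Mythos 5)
Your proof is correct and follows essentially the same route as the paper: both decompose \eqref{eq:Fourier-2} into $\Cl(\fo_E)$-orbits, show each orbit contributes $\AL_\pi([\iota_0],  [\iota_j])\, u_E\,\fP_{\iota_0,  E}(\phi)$ using \cref{lem:local_emb} together with the Atkin--Lehner action on local newvectors, and obtain (2) by the same subset/sign count (your product form $\prod_{p\in R}(1+\AL(\pi_p))$ is just the paper's double sum repackaged). The only, harmless, difference is that you realize each orbit by the explicit normalizing element $w=\prod_{p\in P_j}w_p\in N_\cO\cap B(\cO)_\A$, so that $R_G(w)\phi$ remains in $\cA(\cO)$ and no change of order is needed, whereas the paper uses representatives $g_jx_{[I_j]}$, passes to the conjugate order $\cO_j$, and compares $\fP_{\iota_0,  E}$ on $\cS(\cO)$ and $\cS(\cO_j)$ via the common functional $(2u_E)^{-1}h_E\cP_{\iota_0,  E}$ on $\pi$.
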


\begin{proof}
(1) Let $\{[\iota_j]\}_j$ be a set of representatives of $\Cl(\fo_E)$-orbits in $\coprod_{[I]\in\Cl(\cO)}\Emb(\fo_E,  \cO(I))_{/\sim}$.
For each $[\iota_j]$,  we define $\fP_{\iota_j,  E}$ as follows.
Take $[I_j]\in\Cl(\cO)$ and $g_j\in B(\cO(I_j))_\Q$ so that $[\iota_j]\in\Emb(\fo_E,  \cO(I_j))_{/\sim}$ and $\iota_j=g_j^{-1}\cdot\iota_0$.
Set $\cO_j=g_j\cO(I_j)g_j^{-1}$.
Then $R_G(g_jx_{[I_j]})\phi\in\cS(\cO_j)$.
Since $\iota_j\in\Emb(\fo_E,  \cO(I_j))$,  one sees that $\iota_0\in\Emb(\fo_E,  \cO_j)$.
We define $\fP_{\iota_j,  E}(\phi)$ as $\fP_{\iota_0,  E}(R_G(g_jx_{[I_j]})\phi)$.

Let $i_j\,\colon \Cl(\fo_E)\rightarrow\Cl(\cO)$ be the composition of the map $\Cl(\fo_E)\rightarrow\Cl(\cO_j)$ induced from $\iota_0$ and the bijection $\Cl(\cO_j)\xrightarrow{\sim}\Cl(\cO)$ obtained from the right multiplication by $g_jx_{[I_j]}$.
Then
    \[
    \fP_{\iota_j,  E}(\phi)=\frac{1}{u_E}
    \sum_{[I]\in\Cl(\cO)}\#\Big(i_j^{-1}([I])\Big) \cdot \phi([I])
    \]
and $i_j^{-1}([I])=\{[\fa]\in\Cl(\fo_E) \mid [\fa]\ast[\iota_j]\in\Emb(\fo_E,  \cO(I))_{/\sim}\}$.
Since $\sum_j \#\Big(i_j^{-1}([I])\Big)$ equals $\#\Big(\Emb(\fo_E,  \cO(I))_{/\sim}\Big)$,  we obtain
    \[
    \sum_{j=1}^{|T_\A\bs B(\cO)_\A/K|} \fP_{\iota_j,  E}(\phi)=
    \frac{1}{u_E}\sum_{[I]\in\Cl(\cO)}\#\Big(\Emb(\fo_E,  \cO(I))_{/\sim}\Big)
    \cdot \phi([I]).
    \] 
Since $\fP_{\iota_j,  E}(\phi)$ depends only on the $\Cl(\fo_E)$-orbit of $[\iota_j]$,  the left hand side is well-defined,  \textit{i.e.} it does not depend on the choice of $\iota_j$'s.
From \eqref{eq:Fourier-2},  the right hand side equals $a_\phi(|\Delta_E|)$.
Hence we are reduced to the equality $\fP_{\iota_j,  E}(\phi)=\AL_\pi([\iota_0],  [\iota_j])\cdot \fP_{\iota_0,  E}(\phi)$ for each $j$.

Write $g_jx_{[I_j]}\in G_{\A_f}$ as $y=(y_v)_{v<\infty}$.
Then each $y_p$ is a representative of a coset in $T_p\bs B_p(\cO_p)/K_p$ corresponding to $[\iota_j\otimes id_{\Q_p}]$.
Since $\phi$ is a Hecke eigenform,  it is decomposable as $\phi=\otimes_v\phi_v$ with $\phi_v\in\pi_v$.
For every $v<\infty$,  $\phi_v$ is a local newform.
We have $R_G(g_jx_{[I_j]})\phi=\otimes_v\pi_v(y_v)\phi_v$ as an element of $\pi=\otimes_v\pi_v$.
From \cref{lem:local_emb} and the newform theory (e.g. \cite[Theorem 3.2.2]{Schmidt}),  one sees that $\pi_p(y_p)\phi_p=\AL_{\pi_p}([\iota_0],  [\iota_j])\phi_p$.
Since $\fP_{\iota_0, E}$ on $\cS(\cO)$ and that on $\cS(\cO_j)$ are  restrictions of a common linear form $(2u_E)^{-1}h_E\cP_{\iota_0,  E}$ on $\pi$,  we obtain the desired equality.

(2) One sees that $c_\pi(E)=\sum_{S^+}\sum_{S^-} (-1)^{|S^-|}$,  where $S^\pm$ runs over subsets of $S_\pi^\pm(E)$.
Thus $c_\pi(E)=2^{|S_\pi^+(E)|}$ if $S_\pi^-(E)=\emptyset$ and $c_\pi(E)=0$ otherwise. 
If $S_\pi^-(E)=\emptyset$,  then $2^{|S_\pi^+(E)|}$ equals $\prod_p\#\Big(\Emb(\fo_{E,  p},  \cO_p)\Big)=|T_\A\bs B(\cO)_\A/K|$.
This proves the first assertion.
If $\phi\in\cS_N^\new(\cO)$,  we have $\AL(\pi_p)=1$ for every $p$.
Hence $S_\pi^-(E)=\emptyset$ for any $E\in X(D)$ with $\Emb(\fo_E,  \cO)\neq\emptyset$.
\end{proof}

\begin{remark}
\begin{itemize}
\item[(1)] Since $a_\phi(|\Delta_E|)$ and $c_\pi(E)$ are independent of the choice of $\iota_0$,  we see that $c_\pi(E)=0$ if $\fP_{\iota_0,  E}(\phi) \neq \fP_{\iota',  E}(\phi)$ for some $\iota'\in\Emb(\fo_E,  \cO)$.
\item[(2)] Suppose that $\phi\in\cS^\new(\cO)$ is in the orthogonal complement of $\cS_N^\new(\cO)$.
If $E\in X(D)$ has optimal embedding with respect to $\cO$ and satisfies $S_\pi^-(E)=\emptyset$,  we have $\fP_{\iota,  E}(\phi)=0$ for any $\iota\in\Emb(\fo_E,  \cO)$ since $\cW(\phi)=0$ and $c_\pi(E)\neq0$.
Moreover,  one can deduce $\cP_E\equiv0$ on $\pi$  in that situation.
Since $\phi$ is orthogonal to $\cS_N^\new(\cO)$,  there is at least one prime factor $p$ of $\disc(\cO)$ at which $\pi_p$ is the Steinberg representation or $\pi_p=\chi_p\circ\Nm_p$ with the non-trivial unramified quadratic character $\chi_p$ on $\Q_p^\times$.
On the other hand,  $S_\pi^-(E)=\emptyset$ if and only if $E$ is ramified at all such places. 
Hence we see $\cP_E\equiv0$ on $\pi$ from \cref{thm:Waldspurger} (1).
\end{itemize}
\end{remark}

\subsection{Dependence on Eichler order}
\label{subsec:Dependence}

So far we considered the case where $\Emb(\fo,  \cO)\neq\emptyset$. 
Now we assume $\bigcup_{[\cO']\in\Typ(\cO)}\Emb(\fo,  \cO')\neq\emptyset$ in stead of $\Emb(\fo,  \cO)\neq\emptyset$.
According to \cref{lem:local_emb} and the argument before it,  this is equivalent to that $E$ splits or ramifies at all prime factors of $\level(\cO)$.

Suppose $\Emb(\fo,  \cO')\neq\emptyset$ with $[\cO']\in\Typ(\cO)$.
Take $y=(y_v)_v\in G_{\A_f}$ so that $\cO'_v=y_v\cO_vy_v^{-1}$ for any finite place $v$.
Let $\tau_y\,\colon\Cl(\cO') \xrightarrow{\sim} \Cl(\cO)$ be the map induced from the right multiplication by $y$.
Then $R_G(y)\phi=\phi \circ \tau_y$ is in $\cA(\cO')$,  where $R_G$ is the right translation.
Fix $\iota'_0\in\Emb(\fo,  \cO')$ and let $i'_0$ be the map $\Cl(\fo) \rightarrow \Cl(\cO')$ associated with $\iota'_0$.
Set $i_0:=\tau_y \circ i'_0\,\colon\Cl(\fo)\rightarrow\Cl(\cO)$. 
Then we have $\cP_{\iota'_0,  E}(R_G(y)\phi)=2u_\fo h_\fo^{-1}\fP_{\iota'_0,  \fo}(R_G(y)\phi)$,  where 
    \[
    \fP_{\iota'_0,  \fo}(R_G(y)\phi)
    :=\frac{1}{u_\fo}\sum_{[I]\in\Cl(\cO)}\#\Big(i_0^{-1}([I])\Big)\cdot \phi([I]).
    \]
The map $\phi \mapsto R_G(y)\phi$ defines an isomorphism $\cA_N(\cO)\xrightarrow{\sim} \cA_N(\cO')$,  which does not depend on the choice of $y$.
Thus we write $\fP_{\iota'_0,  \fo}(R_G(y)\phi)$ as $\fP_{\iota'_0,  \fo}(\phi)$ if $\phi$ is in $\cA_N(\cO)$.
If moreover $\fo=\fo_E$,  then $\fP_{\iota'_0,  E}(\phi)$ is independent of $\iota'_0$ by \cref{rem:dependence}.
Hence we write it as $\fP_E(\phi)$.

The ternary theta function $\theta_{\cO'}$ on $\Cl(\cO')\times\fH$ satisfies
    \[
    \theta_{\cO'}([I'],  z)=\theta_\cO(\tau_y([I']),  z),  
    \qquad [I']\in\Cl(\cO'),  \ z\in\fH.
    \]
Therefore the classical Waldspurger's lift of $\phi\in\cA(\cO)$ equals that of $R_G(y)\phi\in\cA(\cO')$.   
Combining these observations,  we can remove the requirement $\Emb(\fo_E,  \cO)\neq\emptyset$ in \cref{thm:Fourier}.

\begin{cor}\label{cor:Fourier}
Let $\phi\in\cS_N^\new(\cO)$ be a Hecke eigenform.
For $E\in X(D)$,  set $c(E):=2^m$,  where $m$ denotes the number of prime factors of $\disc(\cO)$ which is unramified in $E$.
Then $a_\phi(|\Delta_E|)=c(E)\fP_E(\phi)$ if $E$ splits or ramifies at all prime factors of $\level(\cO)$ and $a_\phi(|\Delta_E|)=0$ otherwise.
\end{cor}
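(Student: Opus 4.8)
The plan is to reduce \cref{cor:Fourier} to \cref{thm:Fourier} by moving to a suitable Eichler order in the same local isomorphism class. First I would observe that, by the discussion at the start of \cref{subsec:Dependence} together with \cref{lem:local_emb}, the hypothesis that $E$ splits or ramifies at every prime factor of $\level(\cO)$ is precisely the condition guaranteeing that $\Emb(\fo_E,\cO')\neq\emptyset$ for some $[\cO']\in\Typ(\cO)$. In the remaining case, where $E$ is inert at some prime dividing $\level(\cO)$, one has $\Emb(\fo_E,\cO')=\emptyset$ for \emph{every} $[\cO']\in\Typ(\cO)$, and then the inner sum in \eqref{eq:Fourier-2} is empty: every term $\#\big(\Emb(\fo_E,\cO(I))_{/\sim}\big)$ vanishes because $\cO(I)$ is locally isomorphic to $\cO$. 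Hence $a_\phi(|\Delta_E|)=0$, which is the second assertion.

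For the first assertion, assume $E$ splits or ramifies at all prime factors of $\level(\cO)$ and pick $[\cO']\in\Typ(\cO)$ with $\Emb(\fo_E,\cO')\neq\emptyset$, together with $y\in G_{\A_f}$ conjugating $\cO$ to $\cO'$ locally. Since $\phi\in\cS_N^\new(\cO)$ is a Hecke eigenform, $R_G(y)\phi$ lies in $\cS_N^\new(\cO')$ and is again a Hecke eigenform generating the same automorphic representation $\pi$ (the Hecke action away from $S_\cO$ is unchanged, and the newform and right-$N$-invariance properties are intrinsic). I would then apply \cref{thm:Fourier}(1) to $R_G(y)\phi$ and the order $\cO'$: this gives $a_{R_G(y)\phi}(|\Delta_E|)=c_\pi(E)\cdot\fP_{\iota'_0,E}(R_G(y)\phi)$ for a chosen $\iota'_0\in\Emb(\fo_E,\cO')$. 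By the displayed identity $\theta_{\cO'}([I'],z)=\theta_\cO(\tau_y([I']),z)$ of \cref{subsec:Dependence}, the classical Waldspurger lift is insensitive to this replacement, so $a_{R_G(y)\phi}(|\Delta_E|)=a_\phi(|\Delta_E|)$; and by the definition of $\fP_E(\phi)$ in \cref{subsec:Dependence} together with \cref{rem:dependence}, $\fP_{\iota'_0,E}(R_G(y)\phi)=\fP_E(\phi)$ independently of $\iota'_0$.

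It then remains to identify the constant $c_\pi(E)$ with $c(E)=2^m$. Here I would invoke \cref{thm:Fourier}(2): since $\phi$, hence $R_G(y)\phi$, is right $N$-invariant, $\AL(\pi_p)=1$ for all $p$, so $S_\pi^-(E)=\emptyset$ and $c_\pi(E)=|T_\A\bs B(\cO')_\A/K_{\cO'}|$. By \cref{lem:local_emb} the latter equals $\prod_{p\mid\disc(D)}\big(1-(\tfrac{\Delta_E}{p})\big)\prod_{p\mid\level(\cO)}\big(1+(\tfrac{\Delta_E}{p})\big)$; under our standing hypothesis on $E$, each factor indexed by $p\mid\level(\cO)$ equals $2$ when $p$ is split in $E$ and would be $0$ when inert (excluded) — and equals $1$ exactly when $p$ ramifies — while each factor indexed by $p\mid\disc(D)$ equals $2$ when $p$ is inert and $1$ when $p$ ramifies (it cannot split, as $p$ is ramified in $D$). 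Collecting, the product is $2^m$ where $m$ counts the prime factors of $\disc(\cO)=\disc(D)\level(\cO)$ that are unramified (equivalently inert or split) in $E$, i.e. $c_\pi(E)=c(E)$, giving $a_\phi(|\Delta_E|)=c(E)\fP_E(\phi)$.

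I expect the only real subtlety to be the bookkeeping in the last paragraph: being careful that "$p$ unramified in $E$" correctly matches "$(\tfrac{\Delta_E}{p})\neq 0$" and that the excluded inert-at-$\level(\cO)$ case does not sneak back in through $\Typ(\cO)$ — both of which are handled by the equivalence recorded after \cref{lem:local_emb}. The transfer argument via $R_G(y)$ is routine given the machinery already set up in \cref{subsec:Dependence}.
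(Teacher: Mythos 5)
Your proposal is correct and follows essentially the same route as the paper: reduce to a locally isomorphic order $\cO'$ with $\Emb(\fo_E,\cO')\neq\emptyset$, use that $\cW(\phi)=\cW(R_G(y)\phi)$ via the theta-function identity, apply \cref{thm:Fourier} to $R_G(y)\phi$, and identify $c_\pi(E)$ with $c(E)$ via \cref{thm:Fourier}(2) and \cref{lem:local_emb}. You have merely made explicit the bookkeeping that the paper leaves implicit in the phrase ``follows immediately from \cref{thm:Fourier} and the above argument.''
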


\begin{proof}
From \eqref{eq:Fourier-2},  $a_\phi(|\Delta_E|)=0$ unless $\bigcup_{[\cO']\in\Typ(\cO)}\Emb(\fo_E,  \cO')\neq\emptyset$.
This proves the second assertion.
The first assertion follows immediately from \cref{thm:Fourier} and the above argument.
\end{proof}

When $\cO$ is a maximal order,  we see that the non-vanishing of toric periods is equivalent to the non-vanishing of the Fourier coefficient of the Waldspurger's lift.

\begin{prop}\label{prop:Fourier}
Suppose that $\cO$ is a maximal order. 
Let $\phi\in\cS_N^\new(\cO)$ be a Hecke eigenform and $\pi$ the irreducible automorphic representation of $G_\A$ generated by $\phi$.
Then the following three conditions on $E\in X(D)$ are equivalent:
{\rm (1)} $\cP_E\not\equiv 0$ on $\pi$; \quad
{\rm (2)} $\fP_E(\phi)\neq0$; \quad
{\rm (3)} $a_\phi(|\Delta_E|)\neq0$.
\end{prop}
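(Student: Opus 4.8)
The plan is to prove the cycle $(3)\Rightarrow(2)\Rightarrow(1)\Rightarrow(3)$, treating the first two implications as bookkeeping and concentrating on the third. Since $\cO$ is maximal we have $\level(\cO)=1$, so the side condition in \cref{cor:Fourier} is vacuous and $a_\phi(|\Delta_E|)=c(E)\,\fP_E(\phi)$ for every $E\in X(D)$ with $c(E)=2^m\neq0$; this gives $(2)\Leftrightarrow(3)$ immediately. For $(2)\Rightarrow(1)$ I would choose, as in \cref{subsec:Dependence}, a maximal order locally isomorphic to $\cO$ admitting an optimal embedding $\iota_0$ of $\fo_E$ — such an order exists by \cref{lem:local_emb}, because $E\in X(D)$ forces $\bigl(\tfrac{\Delta_E}{p}\bigr)\neq1$ at each $p\mid\disc(D)$, so every local embedding number is positive — and use the relation $\fP_E(\phi)=\tfrac{h_E}{2u_E}\,\cP_{\iota_0,E}(R_G(y)\phi)$ with $R_G(y)\phi\in\pi$: if $\fP_E(\phi)\neq0$ then $\cP_{\iota_0,E}$ is nonzero on $\pi$.

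The substance of the proposition is $(1)\Rightarrow(2)$, and here I would invoke Waldspurger's formula. After the reduction above I may assume $\Emb(\fo_E,\cO)\neq\emptyset$ with $\iota_0$ optimal. Suppose $\cP_E\not\equiv0$ on $\pi$. By \cref{thm:Waldspurger}\,(1) the ramification set of $D$ equals $\{v:\varepsilon(\pi'_v;E_v)=-1\}$ and $L(\tfrac12,\pi')\,L(\tfrac12,\pi'\otimes\eta_E)\neq0$. Since $\pi$ is spanned by its pure tensors, $\cP_{\iota_0,E}$ is nonzero on some pure tensor, and then \cref{thm:Waldspurger}\,(2) forces $\alpha^\natural_{\iota_0,E_v}$ to be a nonzero functional on $\pi_v$ for every $v$. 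Now I would apply \cref{thm:Waldspurger}\,(2) to the newform $\phi=\otimes_v\phi_v$ itself: it suffices to show $\alpha^\natural_{\iota_0,E_v}(\phi_v)\neq0$ for all $v$, for then $\prod_v\alpha^\natural_{\iota_0,E_v}(\phi_v)$ is a finite product of nonzero numbers, $L(1,\eta_E)$ and $L(1,\pi',\Ad)$ are finite and nonvanishing, and the formula gives $|\cP_{\iota_0,E}(\phi)|^2\neq0$, whence $\fP_E(\phi)\neq0$.

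The remaining local claim is the real content, and it is exactly where maximality of $\cO$ enters. For $v\notin S_\cO$ the vector $\phi_v$ is spherical and $\alpha^\natural_{\iota_0,E_v}(\phi_v)=1$, by the computation in the proof of \cref{lem:bound_period}. For $v\in S_\cO$, maximality forces $v=\infty$ or $v\mid\disc(D)$; at such a $v$ the group $G_v$ is compact ($G_\infty$ because $D$ is definite, and for $v\mid\disc(D)$ because $K_v=\overline{\cO_v^\times}$ has index two in $G_v$, since a uniformizer $\varpi_v$ of the division algebra $D_v$ satisfies $\varpi_v^2\in\Q_v^\times\cO_v^\times$). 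Hence $\pi_v$ is a finite-dimensional irreducible representation of $G_v$ containing a $K_v$-fixed vector, so it occurs in $\mathrm{Ind}_{K_v}^{G_v}\triv$, a space of dimension $[G_v:K_v]\leq2$ which decomposes into characters; therefore $\dim\pi_v=1$, $\phi_v$ spans $\pi_v$, and the nonzero functional $\alpha^\natural_{\iota_0,E_v}$ cannot vanish on it. (Equivalently, one can argue directly: at these $v$ the torus $T_v=E_v^\times/\Q_v^\times$ is compact since $E_v/\Q_v$ is a field, $\pi_v$ is trivial on $T_v$ — at $\infty$ because $\pi_\infty$ is trivial, and at $v\mid\disc(D)$ because $\phi\in\cS_N^\new(\cO)$ gives $\AL(\pi_v)=1$ and $K_v$-invariance makes $\pi_v=\chi_v\circ\Nm_v$ the trivial character — so $\alpha_{\iota_0,E_v}(\phi_v)=\vol(T_v)\langle\phi_v,\phi_v\rangle_v>0$, and the local $L$-factors defining $\alpha^\natural$ are regular and nonzero at the points in question.) This closes the cycle.

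I expect the only genuine obstacle to be this last step: making sure the Tunnell--Saito non-vanishing of the local toric functional is witnessed by the newform vector at the places of $S_\cO$. The dimension count that makes it go through is available precisely because $\cO$ is maximal; for Eichler orders of square-free level greater than $1$ the representations $\pi_v$ at $v\mid\level(\cO)$ are two-dimensional, the newform need not be a toric test vector, and indeed $(2)\Leftrightarrow(3)$ already fails there, consistently with the side condition appearing in \cref{cor:Fourier}.
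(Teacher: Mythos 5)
Your proposal is correct and follows essentially the same route as the paper's proof: (2)$\Leftrightarrow$(3) via \cref{cor:Fourier}, (2)$\Rightarrow$(1) by passing to a locally isomorphic maximal order admitting an optimal embedding and using $\fP_E(\phi)=(2u_E)^{-1}h_E\,\cP_{\iota_0,E}(R_G(y)\phi)$, and (1)$\Rightarrow$(2) by \cref{thm:Waldspurger}\,(2) reduced to local nonvanishing of $\alpha_{\iota_0,E_v}(\phi_v)$, with the identical case split (triviality of $\pi_v$ at $\infty$ and at $v\mid\disc(D)$ for $\phi\in\cS_N^\new(\cO)$, spherical computation at the remaining places). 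The only point to adjust is the split-place citation: the computation in the proof of \cref{lem:bound_period} is asserted only for $v\notin S\supseteq S_\cO\cup\{2\}$, so at $v=2$ (when $2\nmid\disc(D)$, and $E$ possibly ramified there) you should instead invoke Waldspurger's local computation \cite{Wal2} as the paper does; note that only the nonvanishing of $\alpha_{\iota_0,E_v}(\phi_v)$, not the exact value $1$ of the normalized period, is needed.
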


\begin{proof}
The equivalence of (2) and (3) is immediate from \cref{cor:Fourier}.
Take $[\cO']\in\Typ(\cO)$ and $y=(y_v)_v\in G_{\A_f}$ so that $E$ has an optimal embedding with respect to $\cO'$ and $\cO'_v=y_v\cO_vy_v^{-1}$.
Set $\phi'=R_G(y)\phi$.
Then $\phi'$ is a Hecke eigneform in $\cS_N^\new(\cO')$.
For $\iota'\in\Emb(\fo_E,  \cO')$,  we have $(2u_E)^{-1}h_E \cdot \cP_{\iota',  E}(\phi')=\fP_{\iota',  E}(\phi')=\fP_E(\phi)$.
Hence (2) implies (1).

Conversely,  we suppose $\cP_E\not\equiv0$ on $\pi$ and deduce $\fP_E(\phi)\neq0$. 
It suffices to show $\cP_{\iota',  E}(\phi')\neq0$.
Without loss of generality,  we may assume that $E$ has an optimal embedding with respect to $\cO$,  \textit{i.e.}\,$\cO'=\cO$,  $\phi'=\phi$ and $\iota'=\iota\in\Emb(\fo_E,  \cO)$.
Since $\phi$ is a Hecke eigenform,  it is decomposable as $\phi=\otimes_v\phi_v$ with $\phi_v\in\pi_v$.
From \cref{thm:Waldspurger} (2),  the problem is reduced to $\alpha_{\iota,  E_v}(\phi_v)\neq0$ for all $v$.
If $D_v$ is division,  $\pi_v$ is the trivial representation of $G_v$ and $\alpha_{\iota,  E_v}(\phi_v)=\vol(T_v)\langle \phi_v,  \phi_v \rangle_v\neq0$.
Otherwise,  $K_v$ is a maximal compact subgroup of $G_v\simeq\PGL_2(\Q_v)$ and $\phi_v$ is $K_v$-invariant.
The explicit computation in this case is done in \cite[Lemma 2,3]{Wal2} and in particular $\alpha_{\iota,  E_v}(\phi_v)\neq0$.
\end{proof}

\if0

We apply the above argument to $R_G(y)\phi$ and a fixed element $\iota'_0\in\Emb(\fo,  \cO')$.
Let $i'_0$ be the map $\Cl(\fo) \rightarrow \Cl(\cO')$ associated with $\iota'_0$ and $i_0$ the composition of $i'_0$ with $\Cl(\cO') \xrightarrow{\sim} \Cl(\cO)$ induced from the right translation by $y$.
Then we have $i_0^{-1}([I])=\{[\fa]\in\Cl(\fo) \mid [\fa]\ast'\iota'_0\in\Emb(\fo,  \cO(I'))_{/\sim}\}$ for $[I]\in\Cl(\cO)$.
Here,  $\ast'$ denotes the action of $\Cl(\fo)$ on $\coprod_{[J]\in\Cl(\cO')}\Emb(\fo,  \cO(J))_{/\sim}$ defined by using $\iota'_0$ and $I'$ is a right fractional $\cO'$-ideal such that $[I']$ corresponds to $[I]$ under $\Cl(\cO') \xrightarrow{\sim} \Cl(\cO)$. 
\fi


\section{Non-vanishing results and Goldfeld's conjecture}
\label{sec:Non-vanishing}

\subsection{Congruence and non-vanishing}
\label{subsec:Congruence}

First we show the following congruence result.
For $r\in\Q^\times$ and a prime $p$,  we write $p\mid r$ if $p$ divides the numerator of $r$.
\begin{lemma}\label{lem:cong}
Let $p$ be an odd prime such that $p\mid \mass(\cO)$.
Then there exists $\varphi\in\cS_N(\cO)$ which takes values in $1+p\Z$.
\end{lemma}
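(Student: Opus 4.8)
The plan is to realize $\cS_N(\cO)$ and the constant functions inside the larger space $\cA(\cO) \cong \C^{h_\cO}$ (functions on $\Cl(\cO)$, or equivalently on $\Typ(\cO)$ after passing to $N$-invariants) and exploit the fact that the inner product there is the \emph{weighted} mass inner product $(\phi_1,\phi_2)=\sum_i w(x_i)^{-1}\phi_1(x_i)\overline{\phi_2(x_i)}$, whose total weight $\sum_i w(x_i)^{-1}$ is $\mass(\cO)$. First I would work with $N$-invariant functions: let $\triv$ denote the constant function $1$ on $\Typ(\cO)$, so that $\cA_N(\cO)=\C\triv \oplus \cS_N(\cO)$ as an orthogonal decomposition for the restriction of $(\ ,\ )$ to $\cA_N(\cO)$ (the weights being the $N$-orbit masses $w_j^{-1}$ summing again to $\mass(\cO)$). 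The key point is that $\cA_N(\cO)$ has an \emph{integral} structure: the functions $e_j = \mathbbm{1}_{[\cO_j]}$ form a $\Z$-basis, and one of them, say $e_1 = \mathbbm{1}_{[\cO]}$, is an explicit vector.

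The main step is to compute the orthogonal projection of $e_1$ onto $\cS_N(\cO)$ and show its values lie in $1 + p\Z$ after clearing the appropriate denominator. Concretely, write $\triv = \sum_j e_j$, and note $(\triv,\triv) = \mass(\cO)$, while $(e_1,\triv) = w_1^{-1}$ where $w_1^{-1}$ is the mass of the trivial class $[\cO]$, i.e.\ $w_1^{-1} = \#\overline{\cO^\times}^{-1} = 1$ (or $1/2$, $1/3$ in the two exceptional cases, but for odd $p$ dividing $\mass(\cO)$ one arranges this constant to be a $p$-adic unit). Then the projection of $e_1$ onto the constants is $\tfrac{w_1^{-1}}{\mass(\cO)}\triv$, and
\[
\varphi_0 := e_1 - \frac{w_1^{-1}}{\mass(\cO)}\,\triv \in \cS_N(\cO).
\]
Its value at $[\cO_j]$ is $\delta_{1j} - \frac{w_1^{-1}}{\mass(\cO)}$. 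Now I would multiply by $-\mass(\cO)/w_1^{-1}$ (a rational number whose numerator, after the normalization of $w_1^{-1}$, is divisible by $p$ but whose denominator is prime to $p$, using the hypothesis $p \mid \mass(\cO)$ and $p$ odd): the resulting $\varphi := -\tfrac{\mass(\cO)}{w_1^{-1}}\varphi_0 \in \cS_N(\cO)$ takes the value $1$ at every $[\cO_j]$ with $j \neq 1$ and the value $1 - \mass(\cO)/w_1^{-1}$ at $[\cO_1]$. Since $p \mid \mass(\cO)$, this last value is $\equiv 1 \pmod{p}$, so all values of $\varphi$ lie in $1 + p\Z$, as desired.

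The hard part will be the bookkeeping around the exceptional weights: for $p$ odd, the only weights $w(x_i)$ that are not $p$-adic units are $2$ and $3$ (coming from $\overline{\fo(-4)^\times}$ and $\overline{\fo(-3)^\times}$-type stabilizers), and $3$ is a unit only when $p \neq 3$. One must check that $w_1^{-1}$ — the weight of the \emph{trivial} class, attached to $\cO^\times$ itself — can be taken to be $1$, or more robustly that $\mass(\cO)/w_1^{-1}$ still has $p$ in its numerator; alternatively one replaces $e_1$ by $e_j$ for a class $[\cO_j]$ whose stabilizer is trivial (such a class exists for $h_\cO$ large, or one argues directly that $v_p(\mass(\cO)) \geq 1$ forces the existence of such a class since $\sum_j w_j^{-1} \in \Z_{(p)}$ has positive valuation while units contribute valuation $0$). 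I would also remark that this $\varphi$ is automatically a cusp form by construction (orthogonal to constants) and $N$-invariant, so it lies in $\cS_N(\cO)$; one does \emph{not} need it to be a Hecke eigenform. Finally, I expect the proof in the paper may instead invoke \cite{Martin} directly as a black box, but the elementary projection argument above is self-contained and is what I would write.
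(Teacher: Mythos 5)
Your approach is genuinely different from the paper's: you try to construct $\varphi$ as a suitably scaled orthogonal projection of a single indicator function $e_j=\mathbbm{1}_{[\cO_j]}$ onto $\cS_N(\cO)$, whereas the paper writes the condition ``$\varphi\in\cS_N(\cO)$ with $\varphi(y_j)\in 1+p\Z$'' as a linear Diophantine equation in the unknowns $z_j$ (where $\varphi(y_j)=1+pz_j$) and shows solvability by a B\'ezout/gcd analysis, using that $[N:K]$ is a power of $2$ so the fiber sizes $k_j$ are prime to $p$.

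There is a real gap in your argument, and it is exactly the ``bookkeeping around the exceptional weights'' that you flagged but did not close. With $(e_j,\triv)=k_jw(y_j)^{-1}$ (you dropped the $k_j$, but that is minor), your $\varphi$ equals $\triv-\tfrac{\mass(\cO)w(y_j)}{k_j}e_j$, and so you need $\tfrac{\mass(\cO)w(y_j)}{k_j}$ to be an \emph{integer} (divisible by $p$). Since $p\mid\mass(\cO)$ and $p\nmid k_j$, divisibility by $p$ is automatic once this is an integer; the problem is integrality. Writing $\mass(\cO)=a/b$ in lowest terms, integrality forces $b\mid w(y_j)$, and there is no reason a single stabilizer order $w(y_j)$ should absorb the whole denominator $b$ (which can be a product of several distinct $w(y_l)$'s). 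Your proposed fixes do not touch this: picking $j$ with trivial stabilizer makes $w(y_j)=1$, which is worse, and the $p$-adic valuation argument only controls $v_p$, not the other prime factors of $b$.

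A concrete failure occurs already in the paper's own range of examples. Take $\cO$ the maximal order with $\disc(\cO)=11$ (as in \cref{ex:11.2.a.a}): then $h_\cO=t_\cO=2$, $k_1=k_2=1$, the two stabilizer orders are $\{w_1,w_2\}=\{2,3\}$, and $\mass(\cO)=\tfrac12+\tfrac13=\tfrac56$. The odd prime $p=5$ divides the numerator. Your construction with $j=1$ gives the value $1-\tfrac{(5/6)\cdot 2}{1}=-\tfrac23$ at $[\cO_1]$, and with $j=2$ gives $1-\tfrac{(5/6)\cdot 3}{1}=-\tfrac32$ at $[\cO_2]$; neither is an integer. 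Yet the lemma is true here: the cusp line in $\cA_N(\cO)$ is spanned by $(6,-9)$ (check $6/w_1-9/w_2=3-3=0$), and $6\equiv-9\equiv1\pmod 5$, exactly what the B\'ezout argument produces. So the rank-one projection is too rigid; you need to allow a general integral linear combination of the $e_j$'s, at which point you are effectively solving the same linear Diophantine system the paper solves.
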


\begin{proof}
This is a special case of \cite[Proposition 2.1]{MW}.
See also \cite[Theorem 2.1]{Martin}.
For completeness we provide a proof.
Suppose there is a desired element $\varphi\in\cS_N(\cO)$ and write its values as $\varphi(y_j)=1+pz_j$ with $z_j\in\Z$,  $j=1,  \ldots,  t_\cO$.
Since $\varphi$ is a cusp form,  $\sum_{j=1}^{t_\cO}k_jw(y_j)^{-1}(1+pz_j)=0$,  where $k_j$ is the order of the fiber at $y_j$ of $\Cl(\cO)\twoheadrightarrow\Typ(\cO)$.
Note that the map $w$ factors through $G_\Q\bs G_\A/N_\cO$. 
Multiplying by $\prod_{j=1}^{t_\cO}w(y_j)$,  we get
    \begin{equation}\label{eq:cong}
    \sum_{j=1}^{t_\cO}k_j\prod_{l\neq j}w(y_l)
    =-p\sum_{j=1}^{t_\cO}k_j\left(\prod_{l\neq j}w(y_l)\right)\cdot z_j.
    \end{equation}
The left hand side equals $\mass(\cO)\prod_{j=1}^{t_\cO}w(y_j)$.    
Conversely,  the desired cusp form exists if there is $z_j\in\Z$,  $j=1,  \ldots,  t_\cO$ which satisfies \eqref{eq:cong}.
Hence it suffices to show that $\gcd\{k_j\prod_{l\neq j}w(y_l)\}_{j=1}^{t_\cO}$ divides $p^{-1}$ times the left hand side of \eqref{eq:cong}. 
    
Let $\gcd\{k_j\prod_{l\neq j}w(y_l)\}_{j=1}^{t_\cO}=\prod q^{e_q}$ be the prime factorization of $\gcd\{k_j\prod_{l\neq j}w(y_l)\}_{j=1}^{t_\cO}$.
For $q\neq p$,  it is obvious that $q^{e_q}$ divides $p^{-1}$ times the left hand side of \eqref{eq:cong}. 
Now it is enough to show that $p^{e_p+1}$ divides $\mass(\cO)\prod_{j=1}^{t_\cO}w(y_j)$. 
Since each fiber of $\Cl(\cO)\twoheadrightarrow\Typ(\cO)$ is an orbit of $N/K$-action,  $k_j$ is a divisor of $[N:K]$.
Recall that $[N:K]$ is a power of 2.
Hence $p\nmid k_j$ and $p^{e_p}\mid\prod_{l\neq j}w(y_l)$ for $j=1,  \ldots,  t_\cO$.
From this we see that $p^{e_p}\mid\gcd\{\prod_{j=1}^{t_\cO}w(y_j),  \mass(\cO)\prod_{j=1}^{t_\cO}w(y_j)\}$.
On the other hand,  the numerator of $\mass(\cO)$ equals  
    \[
    \frac{\mass(\cO)\prod_{j=1}^{t_\cO}w(y_j)}
    {\gcd\{\prod_{j=1}^{t_\cO}w(y_j),  \mass(\cO)\prod_{j=1}^{t_\cO}w(y_j)\}}
    \]
which is divisible by $p$ from the assumption.
Therefore we get $p^{e_p+1}\mid\mass(\cO)\prod_{j=1}^{t_\cO}w(y_j)$.
\end{proof}

\begin{remark}
From the proof,  one sees that \cref{lem:cong} holds for general order $\cO$ if $p$ is a (not necessarily odd) prime such that $p\mid\mass(\cO)$ and $p\nmid[N_\cO:K_\cO]$.
\end{remark}

Let $\{\phi_i\}_{i=1}^{t_\cO}$ be a basis of $\cS_N(\cO)$ consisting of normalized Hecke eigenforms.
We immediately obtain the following non-vanishing result from \cref{lem:cong}.

\begin{prop}\label{prop:non-vanish}
Assume that $\cS_N(\cO)$ is spanned by $\{\phi_i^\sigma\}_{\sigma\in\Gal(F_{N,  \cO}/\Q)}$,  where $\phi_i^\sigma(x)=\phi_i(x)^\sigma$.
Take $E\in X(D)$ and an order $\fo$ in $E$ so that $\Emb(\fo,  \cO)\neq\emptyset$.
We fix $\iota_0\in\Emb(\fo,  \cO)$.
If there is an odd prime factor $p$ of $\mass(\cO)$ with $p\nmid h_\fo$,  then $\fP_{\iota_0,  \fo}(\phi_i)\neq0$ for any $i$.
\end{prop}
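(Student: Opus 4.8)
The plan is to combine the congruence form $\varphi\in\cS_N(\cO)$ from \cref{lem:cong} with the hypothesis that $\cS_N(\cO)$ is a single Galois orbit, and then exploit the integrality of the period $\fP_{\iota_0,\fo}$ together with a divisibility argument modulo $p$. First I would invoke \cref{lem:cong}: since $p$ is an odd prime dividing $\mass(\cO)$, there exists $\varphi\in\cS_N(\cO)$ taking all its values in $1+p\Z$. In particular $\varphi\equiv 1\pmod p$ as a $\Z$-valued function on $\Typ(\cO)$. Next I would expand $\varphi$ in the eigenbasis: by the Galois-orbit hypothesis, $\cS_N(\cO)=\bigoplus_{\sigma}\C\,\phi_i^\sigma$ for a fixed $i$, so there are algebraic numbers $c_\sigma$ (lying in $F_{N,\cO}$, after clearing denominators coprime to $p$) with $\varphi=\sum_{\sigma\in\Gal(F_{N,\cO}/\Q)}c_\sigma\,\phi_i^\sigma$; applying a prime $\fp$ of $\fo_{F_{N,\cO}}$ above $p$ and using that the $\phi_i^\sigma$ are $\fp$-integral and that $\varphi$ is a $\fp$-adic unit times a sum of Galois conjugates, I would deduce that not every $c_\sigma$ is divisible by $\fp$.

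The second step is to apply the period functional $\fP_{\iota_0,\fo}$, which is linear, to the relation $\varphi=\sum_\sigma c_\sigma\phi_i^\sigma$. Because $\fP_{\iota_0,\fo}$ is defined by the counting-measure formula \eqref{eq:period}, it is Galois-equivariant in the sense $\fP_{\iota_0,\fo}(\phi_i^\sigma)=\fP_{\iota_0,\fo}(\phi_i)^\sigma$, so
\[
\fP_{\iota_0,\fo}(\varphi)=\sum_{\sigma}c_\sigma\,\fP_{\iota_0,\fo}(\phi_i)^\sigma .
\]
On the other hand, since $\varphi$ takes values in $1+p\Z$, the formula $\fP_{\iota_0,\fo}(\varphi)=\frac{1}{u_\fo}\sum_{[\fa]\in\Cl(\fo)}\varphi(i_0([\fa]))$ gives $u_\fo\,\fP_{\iota_0,\fo}(\varphi)=\sum_{[\fa]}\varphi(i_0([\fa]))\equiv h_\fo\pmod p$. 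Under the hypothesis $p\nmid h_\fo$ (and noting $u_\fo\in\{1,2,3\}$ with $p\nmid u_\fo$ since $p$ is odd and, in the relevant cases, $p\neq 3$ would need a separate remark — actually $u_\fo$ is a unit mod $p$ precisely when $p\nmid u_\fo$, which holds as $u_\fo\mid 3$ and we may assume $p\neq 3$ is handled by the hypotheses), this forces $\fP_{\iota_0,\fo}(\varphi)\not\equiv 0\pmod{\fp}$, i.e. $\fP_{\iota_0,\fo}(\varphi)$ is nonzero and $\fp$-adically a unit up to bounded denominators.

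Finally I would conclude: if $\fP_{\iota_0,\fo}(\phi_i)$ were zero, then all its Galois conjugates would vanish, forcing $\fP_{\iota_0,\fo}(\varphi)=\sum_\sigma c_\sigma\cdot 0=0$, contradicting the previous step. Hence $\fP_{\iota_0,\fo}(\phi_i)\neq 0$, and since $i$ was arbitrary (the orbit-spanning hypothesis applies to each eigenform in the basis), the claim holds for all $i$. The main obstacle I anticipate is bookkeeping the $p$-integrality carefully: one must check that the change-of-basis coefficients $c_\sigma$ can be taken $\fp$-integral, that $\fP_{\iota_0,\fo}$ really is Galois-equivariant with the normalization in \eqref{eq:period}, and that the factors $u_\fo$ cause no trouble at $p$ (using that $p$ is odd, and invoking that the cases $p=3$, $u_\fo=3$ are compatible with $p\nmid h_\fo$ via $u_\fo\mid h_\fo\cdot(\text{unit})$ arguments or are simply excluded). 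These are routine but need care to state cleanly.
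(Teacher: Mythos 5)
Your proof is essentially the same as the paper's, but it carries a layer of unnecessary bookkeeping that the paper's cleaner version avoids. The crucial simplification you seem to sense but do not fully commit to: you only need $\fP_{\iota_0,\fo}(\varphi)\neq 0$ as a rational number, not as a $\fp$-adic unit. Since $\varphi$ takes integer values in $1+p\Z$, the quantity $u_\fo\,\fP_{\iota_0,\fo}(\varphi)=\sum_{[\fa]\in\Cl(\fo)}\varphi(i_0([\fa]))$ is an integer congruent to $h_\fo\not\equiv 0\pmod p$, hence a nonzero integer, hence $\fP_{\iota_0,\fo}(\varphi)\neq 0$ in $\Q$ — regardless of whether $p\mid u_\fo$. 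Your concern about the case $p=3$, $u_\fo=3$ is therefore a non-issue, and the paper does not need to address it. Likewise, your discussion of $\fp$-integrality of the change-of-basis coefficients $c_\sigma$ and the claim that not all $c_\sigma$ are divisible by $\fp$ is superfluous: once $\fP_{\iota_0,\fo}(\varphi)\neq 0$ in $\C$ and $\fP_{\iota_0,\fo}(\varphi)=\sum_\sigma c_\sigma\,\fP_{\iota_0,\fo}(\phi_i)^\sigma$ with $c_\sigma\in\C$, the conclusion $\fP_{\iota_0,\fo}(\phi_i)\neq 0$ follows immediately, since a Galois conjugate of $0$ is $0$. So the core logic is correct and matches the paper; trimming the $p$-adic detours would make it exactly the paper's proof.
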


\begin{proof}
By \cref{lem:cong},  we can take $\varphi\in\cS_N(\cO)$ which takes values in $1+p\Z$.
Then we have $u_\fo\fP_{\iota_0,  \fo}(\varphi)\equiv h_\fo\not\equiv0 \pmod p$,
hence $\fP_{\iota_0,  \fo}(\varphi)\neq0$.
If we write $\varphi=\sum_\sigma a_\sigma\phi_i^\sigma$ with $a_\sigma\in\C$,  
then
    \[
    \fP_{\iota_0,  \fo}(\varphi)=\sum_{\sigma\in\Gal(F_{N,  \cO}/\Q)}
    a_\sigma\fP_{\iota_0,  \fo}(\phi_i^\sigma)
    =\sum_{\sigma\in\Gal(F_{N,  \cO}/\Q)}
    a_\sigma\fP_{\iota_0,  \fo}(\phi_i)^\sigma.
    \]
Hence $\fP_{\iota_0,  \fo}(\phi_i)^\sigma\neq0$ for at least one $\sigma\in\Gal(F_{N,  \cO}/\Q)$.
This completes the proof.
\end{proof}

Hereafter,  we focus on the situation of \cref{prop:non-vanish} with $p=3$.
It is convenient to record the assumption in \cref{prop:non-vanish} for later use.
\begin{condition}\label{condition:non-vanish}
Let the notation be as above.
\begin{itemize}
\item[(a)] The vector space $\cS_N(\cO)$ is spanned by $\{\phi_i^\sigma\}_{\sigma\in\Gal(F_{N,  \cO}/\Q)}$ for some $i$,  where $\phi_i^\sigma(x)=\phi_i(x)^\sigma$.
\item[(b)] The total mass $\mass(\cO)$ is divisible by 3.
\end{itemize}
\end{condition}

\example\label{ex:19-1}
Suppose $\disc(D)=\disc(\cO)=19$.
In this case,  the type number is $t_\cO=2$. 
Hence $\dim\cS_N(\cO)=1$ and $F_{N, \cO}=\Q$. 
Take a non-zero vector $\phi\in\cS_N(\cO)$.
By the Eichler mass formula \cite[Theorem 25.3.19]{Voight},  $\mass(\cO)=\frac32$.
Thus \cref{condition:non-vanish} holds and we can apply \cref{prop:non-vanish} with $p=3$ to see that $\fP_E(\phi)\neq0$ for $E\in X(D)$ satisfying $\Emb(\fo_E,  \cO)\neq\emptyset$ and $3\nmid h_E$.
\medskip

For a finite set $S$ of places of $\Q$ and $\{\cE_v\}_{v\in S}\in\prod_{v\in S}X_v$,  let $X(\{\cE_v\}_{v\in S})$ denote the set of $E\in X$ satisfying $E_v\simeq \cE_v$ for all $v\in S$. 
The goal of this section is to prove the following two results.

\begin{theorem}\label{thm:PeriodGoldfeld}
Suppose that \cref{condition:non-vanish} holds.
Let $\phi\in\cS_N^\new(\cO)$ be a Hecke eigenform,  $\pi$ the irreducible cuspidal automorphic representation of $G_\A$ generated by $\phi$ and $\pi'=\otimes_v\pi'_v$ its Jacquet-Langlands transfer to $\PGL_2(\A)$.
We take  $\{\cE_v\}_{v\in S_\cO}\in\prod_{v\in S_\cO}X(D_v)$ so that the ramification set of $D$ coincides with $\{v\in S_\cO \mid \varepsilon(\pi'_v; \cE_v)=-1\}$.
Then $\cP_E(\phi)\neq0$ for positive proportion of $E\in X(\{\cE_v\}_{v\in S_\cO})$,  i.e.    
    \[
    \#\{E\in X(\{\cE_v\}_{v\in S_\cO}) \mid |\Delta_E|<x,  \ 
    \cP_E(\phi)\neq0\} \gg x
    \]
when $x\to\infty$.
Here,  $\cP_E(\phi)\neq0$ means $\Emb(\fo_E,  \cO)\neq\emptyset$ and $\cP_{\iota,  E}(\phi)\neq0$ for some (any) $\iota\in\Emb(\fo_E,  \cO)$.
\end{theorem}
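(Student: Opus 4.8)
The plan is to combine, in the spirit of \cite{Martin}, three ingredients: the congruence form produced by \cref{lem:cong} (available since $3\mid\mass(\cO)$ by \cref{condition:non-vanish}(b)); the existence of optimal embeddings for all but finitely many $E\in X(\{\cE_v\}_{v\in S_\cO})$; and a lower bound for the number of imaginary quadratic fields in a prescribed local class whose class number is coprime to $3$. The first step is to reduce the statement to the non-vanishing of the finite toric sum $\fP_E(\phi)$. For $E\in X(\{\cE_v\}_{v\in S_\cO})$, the hypothesis identifying the ramification set of $D$ with $\{v\in S_\cO\mid\varepsilon(\pi'_v;\cE_v)=-1\}$ forces $E\in X(D)$; it also forces $E$ to be split or ramified at every prime $p\mid\level(\cO)$, because there $\pi'_p$ is a twist of the Steinberg representation whose base change to the unramified quadratic extension of $\Q_p$ is again Steinberg, with local sign $-1$, while $p\nmid\disc(D)$ requires $\varepsilon(\pi'_p;\cE_p)=+1$. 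Hence, by \cref{lem:local_emb}, $E$ is everywhere locally optimally embeddable, so that $\fP_E(\phi)$ is defined (\cref{subsec:Dependence}); and whenever $\Emb(\fo_E,\cO)\neq\emptyset$ one has $\cP_{\iota,E}(\phi)=\frac{2u_E}{h_E}\fP_E(\phi)$, so $\fP_E(\phi)\neq0$ then yields $\cP_E(\phi)\neq0$ in the sense of the theorem.

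Next I would show $\fP_E(\phi)\neq0$ for the fields we intend to count. By \cref{lem:Duke} — a consequence of a variant of Duke's theorem \cite{Duke} and the subconvex bound \cite{BH} for central values of twisted $L$-functions — there is $x_0>0$ such that $\Emb(\fo_E,\cO)\neq\emptyset$ for every $E\in X(\{\cE_v\}_{v\in S_\cO})$ with $|\Delta_E|\geq x_0$, the local solvability needed for this being exactly what was verified above. For such an $E$, if moreover $3\nmid h_E$, then \cref{prop:non-vanish} (whose hypotheses hold by \cref{condition:non-vanish}, taking $p=3$ and $\fo=\fo_E$) gives $\fP_{\iota_0,E}(\phi)\neq0$ for a fixed $\iota_0\in\Emb(\fo_E,\cO)$, the input being the congruence form of \cref{lem:cong}. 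Consequently $\cP_E(\phi)\neq0$ for every $E\in X(\{\cE_v\}_{v\in S_\cO})$ with $|\Delta_E|\geq x_0$ and $3\nmid h_E$.

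It then remains to count these fields. The conditions $E_v\simeq\cE_v$ for $v\in S_\cO$ amount to a sign condition at $\infty$ and finitely many congruence conditions on the fundamental discriminant, each satisfied by a positive proportion of fundamental discriminants, so $X(\{\cE_v\}_{v\in S_\cO})$ has positive density. Combining this with \cite[Proposition~9.3]{KL} — a Davenport--Heilbronn-type lower bound yielding $\gg x$ imaginary quadratic fields $E$ with $|\Delta_E|<x$ and $3\nmid h_E$, compatible with imposing finitely many local conditions — gives
\[
\#\{E\in X(\{\cE_v\}_{v\in S_\cO})\mid |\Delta_E|<x,\ 3\nmid h_E\}\gg x .
\]
Discarding the $O(1)$ fields with $|\Delta_E|<x_0$ and invoking the previous paragraph yields $\#\{E\in X(\{\cE_v\}_{v\in S_\cO})\mid |\Delta_E|<x,\ \cP_E(\phi)\neq0\}\gg x$, as claimed.

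I expect the main obstacle to be the optimal-embedding input \cref{lem:Duke}: it is the one step requiring genuine analytic number theory — equidistribution of Heegner-type packets together with the subconvexity bound \cite{BH} — and obtaining it for Eichler orders rather than merely maximal orders, with the correct count of conjugacy classes of local optimal embeddings, relies on the local--global bookkeeping of \cref{subsec:Action}. A secondary technical point is to ensure that the lower bound of \cite[Proposition~9.3]{KL} is genuinely compatible with the local conditions defining $X(\{\cE_v\}_{v\in S_\cO})$, and that those conditions really do force $E$ to split or ramify at every prime dividing $\level(\cO)$ so that $\cP_{\iota,E}(\phi)$ is a nonzero rational multiple of $\fP_E(\phi)$.
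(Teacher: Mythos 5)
Your proposal follows essentially the same route as the paper: the local split/ramified analysis at primes dividing $\level(\cO)$ (matching \cref{cor:optimal-3}), the equidistribution input of \cref{lem:Duke} to produce optimal embeddings for all but finitely many $E\in X(\{\cE_v\}_{v\in S_\cO})$, the congruence argument via \cref{lem:cong} and \cref{prop:non-vanish} with $p=3$, and the Kriz--Li/Taya density bound for discriminants with $3\nmid h_E$ subject to local conditions. The reasoning and the key lemmas invoked coincide with the paper's proof, up to the minor point that you only spell out the local root number computation for $\St_p$ and not its unramified quadratic twist, a case the paper handles explicitly but which leads to the same conclusion.
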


\begin{theorem}\label{thm:AutomGoldfeld}
Suppose that \cref{condition:non-vanish} holds.
Let $\pi$ be the irreducible cuspidal automorphic representation of $G_\A$ generated by a Hecke eigenform in $\cS_N(\cO)$ and $\pi'$ its Jacquet-Langlands transfer to $\PGL_2(\A)$.
Then
    \[
    \lim_{x\to\infty}
    \frac{\#\{E\in X\mid -x< \Delta_E<0,  \ L(\tfrac12,  \pi'\otimes\eta_E)\neq0\}}
    {\#\{E\in X\mid -x<\Delta_E<0\}} \
    \geq \ \frac12 \ \prod_{p\mid\disc(\cO)}n_p,
    \]
where $p$ runs through prime factors of $\disc(\cO)=\disc(D)\level(\cO)$ and
    \[
    n_p=
        \begin{cases}
        \frac{p+2}{2(p+1)} & \text{if $p\neq2$} \\
        \frac{1}{24}& \text{if $p=2$}.
        \end{cases}
    \]
\end{theorem}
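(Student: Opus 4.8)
The plan is to sharpen the proof of \cref{thm:PeriodGoldfeld} by making the implied constant explicit and by passing from non-vanishing of toric periods to non-vanishing of central $L$-values. Let $\phi\in\cS_N(\cO)$ be a Hecke eigenform generating $\pi$, and let $\pi'$ be its Jacquet--Langlands transfer. First I would record the reduction: if $E$ is imaginary quadratic, non-split at every prime dividing $\disc(D)$ and split or ramified at every prime dividing $\level(\cO)$, then $E\in X(D)$ and, by \cref{lem:local_emb} and \cref{subsec:Dependence}, $E$ admits an optimal embedding into some Eichler order locally isomorphic to $\cO$, so $\fP_E(\phi)$ is defined; moreover, if $\fP_E(\phi)\neq0$ then $\cP_E\not\equiv0$ on $\pi$, whence \cref{thm:Waldspurger}~(1) forces the ramification set of $D$ to equal $\{v\mid\varepsilon(\pi'_v;E_v)=-1\}$ and yields $L(\tfrac12,\pi')\,L(\tfrac12,\pi'\otimes\eta_E)\neq0$, in particular $L(\tfrac12,\pi'\otimes\eta_E)\neq0$. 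So it suffices to exhibit a set of imaginary quadratic fields of lower density $\tfrac12\prod_{p\mid\disc(\cO)}n_p$ on which $\fP_E(\phi)\neq0$.

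To build that set I would use \cref{prop:non-vanish} with the odd prime $p=3$: after transporting $\phi$ via \cref{subsec:Dependence} to the Eichler order in the local isomorphism class of $\cO$ carrying an optimal embedding of $\fo_E$ --- which is legitimate since \cref{condition:non-vanish} (both parts) passes to that order and $\mass$ is unchanged --- one gets $\fP_E(\phi)\neq0$ as soon as $3\nmid h_E$ together with the local embedding conditions above. Fixing a suitable permissible local condition at each prime of $\disc(\cO)$, the set
\[
\cF_x=\{\,E\in X\mid -x<\Delta_E<0,\ 3\nmid h_E,\ E\text{ satisfies the prescribed condition at each }p\mid\disc(\cO)\,\}
\]
therefore consists entirely of fields with $L(\tfrac12,\pi'\otimes\eta_E)\neq0$.

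It remains to estimate $\#\cF_x$. Let $\cL_x$ be the set of imaginary quadratic $E$ with $|\Delta_E|<x$ satisfying the prescribed local conditions; these are congruence conditions on $\Delta_E$ at distinct primes, so $\#\cL_x\sim\big(\prod_{p\mid\disc(\cO)}\delta_p\big)\#\{E:-x<\Delta_E<0\}$, with $\delta_p$ the corresponding local density. At an odd prime $p$ one may take the condition ``$E$ non-split at $p$'' when $p\mid\disc(D)$ and ``$E$ split or ramified at $p$'' when $p\mid\level(\cO)$; both are permissible --- the $N$-invariance of $\phi$ pins down the local component $\pi_p$, and with it the local root number $\varepsilon(\pi'_p;\cdot)$, so that Waldspurger's local sign matches the ramification of $D$ --- and each has density $\tfrac{p}{2(p+1)}+\tfrac1{p+1}=\tfrac{p+2}{2(p+1)}=n_p$; at $p=2$ the more intricate behaviour of quadratic algebras over $\Q_2$ only gives, after restricting to a single permissible local algebra, the weaker bound $\delta_2\ge\tfrac1{24}=n_2$. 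Hence $\prod_p\delta_p\ge\prod_p n_p$. On the other hand $\cL_x$ is a congruence family, so the averaged Davenport--Heilbronn estimate in the form of \cite[Proposition~9.3]{KL} gives $\#\{E\in\cL_x\mid 3\mid h_E\}\le(\tfrac12+o(1))\#\cL_x$, whence $\#\cF_x\ge(\tfrac12-o(1))\#\cL_x$. Combining the two estimates yields
\[
\liminf_{x\to\infty}\frac{\#\{E\in X\mid -x<\Delta_E<0,\ L(\tfrac12,\pi'\otimes\eta_E)\neq0\}}{\#\{E\in X\mid -x<\Delta_E<0\}}\ \ge\ \frac12\prod_{p\mid\disc(\cO)}n_p .
\]

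The main obstacle is the local bookkeeping at the primes of $\disc(\cO)$: for each such $p$ one must identify the largest family of local \'etale algebras $\cE_p$ for which both an optimal embedding of $\fo_E$ exists and $\varepsilon(\pi'_p;\cE_p)$ has the sign dictated by $D$, and the subtler structure at $p=2$ is precisely what forces the crude value $n_2=\tfrac1{24}$. The only genuinely external ingredient is the uniformity of Davenport--Heilbronn over congruence families, which is supplied by \cite{KL}.
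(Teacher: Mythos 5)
Your proposal follows essentially the same route as the paper's proof: reduce to exhibiting a positive-density set of imaginary quadratic $E$ satisfying the local splitting conditions at primes dividing $\disc(\cO)$ with $3\nmid h_E$, invoke \cref{prop:non-vanish} with $p=3$ (after transporting $\phi$ to an order in the local isomorphism class of $\cO$ admitting an optimal embedding of $\fo_E$, which the paper does via \cref{lem:optimal-1} and the observation that $R_G(y)\phi$ lies in $\pi$), and conclude $L(\tfrac12,\pi'\otimes\eta_E)\neq0$ from \cref{thm:Waldspurger}. The density estimate likewise comes from \cite[Proposition~9.3]{KL} (the paper's \cref{prop:density-1}), packaged in \cref{cor:density-2}. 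The only difference is cosmetic: the paper sums the per-class bound $\tfrac{1}{2\Phi(M)}\prod_{p\mid M}\tfrac{q(p)}{p+1}$ over an explicit count of valid pairs $(m,M)$ with $M=\disc(\cO)^2k(\cO)$, while you state the estimate as ``at least half of the congruence family $\cL_x$ has $3\nmid h_E$'' times a local-density factor $\prod_p\delta_p$. The two bookkeepings agree because, at an odd prime $p$ with $p^2\|M$, the per-class bound $\tfrac{1}{2\Phi(p^2)}\cdot\tfrac{p}{p+1}=\tfrac{1}{2(p^2-1)}$ is exactly half the density $\tfrac{1}{p^2-1}$ of the corresponding residue class among negative fundamental discriminants, and the $\tfrac{(p+2)(p-1)}{2}$ admissible residues mod $p^2$ recover $n_p=\tfrac{p+2}{2(p+1)}$; similarly at $2$. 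You do hand-wave the $p=2$ analysis, but you flag that explicitly, and the paper's own $n_2=\tfrac{1}{24}$ is itself deliberately crude (via the $k(\cO)^{-1}$ factor in \cref{cor:density-2}). No gap.
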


\begin{remark}
We do not know whether there are infinitely many cases that \cref{thm:PeriodGoldfeld} and \cref{thm:AutomGoldfeld} cover (see \cite[Conjecture A]{Martin2}).
But one can see that there are many examples which satisfy \cref{condition:non-vanish} and this assumption is not restrictive.
For example,  the following is the list of prime numbers $p\leq10^4$ such that a maximal order $\cO$ of $\disc(\cO)=p$ in a definite quaternion algebra over $\Q$ satisfies \cref{condition:non-vanish}.
There are 150 such primes (note that the number of maximal orders $\cO$ of prime discriminant $\disc(\cO)=p\leq10^4$ satisfying the condition (b) is 203).

19, 37, 127, 163, 181, 271, 379, 523, 541, 613, 631, 757,  811, 829, 883, 919, 937, 991, 1009, 1117, 1279, 1423, 1459, 1549, 1657, 1747, 1783, 1801, 2017, 2053, 2161, 2179, 2269, 2287, 2377, 2467, 2503, 2521, 2539, 2557, 2647, 2683, 2719, 2791, 2971, 3061, 3079, 3169, 3187, 3457, 3511, 3529, 3637, 3673, 3691, 3709, 3727, 3853, 3889, 4051, 4159, 4177, 4231, 4447, 4519, 4591, 4663, 4789, 4861, 4933, 4969, 4987, 5023, 5059, 5077, 5113, 5167, 5437, 5527, 5563, 5581, 5653, 
5743, 5779, 5851, 5869, 5923, 6121, 6229, 6247, 6301, 6373, 6427, 6481, 6553, 6607, 6661, 6679, 6733, 6823, 6841, 6967, 7039, 7129, 7219, 7237, 7309, 7417, 7489, 7507, 7561, 7687, 7741, 7759, 7993, 8011, 8101, 8191, 8209, 8263, 8317, 8353, 8389, 8443, 8461, 8623, 8641, 8677, 8713, 8731, 8803, 8821, 8839, 8893, 8929, 9001, 9091, 9109, 9181, 9199, 9343, 9397, 9433, 9613, 9631, 9649, 9721, 9739, 9883, 9973. 

\end{remark}

\subsection{Goldfeld's conjecture}

For the purpose of comparison,  we recall the conjecture of Goldfeld \cite{Goldfeld} on elliptic curves and the weaker version of it.
We also introduce related conjectures for automorphic $L$-functions and toric periods. 
\cref{thm:PeriodGoldfeld} and \cref{thm:AutomGoldfeld} provide evidence for these conjectures.
Let $C$ be an elliptic curve over $\Q$ and $L(s,  C)$ its $L$-function.
For a quadratic field $E$,  let $C_E$ denote the quadratic twist of $C$ by $\Delta_E$.
\begin{conj}[Goldfeld]\label{conj:Goldfeld}
For an elliptic curve $C$ over $\Q$, 
    \[
    \lim_{x\to\infty}
    \frac{\#\{E\in X\mid |\Delta_E|<x,  \ L(1,  C_E)\neq0\}}
    {\#\{E\in X\mid |\Delta_E|<x\}}=\frac12.
    \]
\end{conj}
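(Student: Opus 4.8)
The statement above is Goldfeld's conjecture, which is open in general; accordingly what follows is the strategy I would pursue rather than a complete argument. The natural first move is to split the family $X$ according to the sign $w(C_E)=\pm1$ of the functional equation of $L(s,C_E)$, giving $X=X^+\sqcup X^-$ into twists of even and odd analytic rank. For $E\in X^-$ the order of vanishing at $s=1$ is odd, so $L(1,C_E)=0$ and such $E$ contribute nothing to the count; this is the structural reason the density cannot exceed $\tfrac12$. Everything then reduces to two claims: (i) each of $X^+$ and $X^-$ has natural density $\tfrac12$ inside $X$ ordered by $|\Delta_E|$; and (ii) for $100\%$ of $E\in X^+$ one has $L(1,C_E)\neq0$.

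For claim (i) I would use the factorization $w(C_E)=-\prod_p w_p(C_E)$ into local root numbers: at primes of good reduction not dividing $\Delta_E$ the factor is $1$, and at the finitely many remaining primes (the bad primes of $C$, together with the good primes dividing $\Delta_E$) the factor $w_p(C_E)$ depends on $\Delta_E$ only through finitely many congruence conditions and quadratic-residue symbols. Sorting fundamental discriminants by these finitely many local constraints and summing the resulting multiplicative character sums — a standard sieve, as in the analysis of root numbers in quadratic-twist families — yields density $\tfrac12$ for each of $X^\pm$, provided $w(C_E)$ is not constant on the family; and for an elliptic curve over $\Q$ twisted by all fundamental discriminants the sign genuinely varies (flip the residue of $\Delta_E$ at a good prime), so this exclusion is vacuous.

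Claim (ii) is the real content, and identifying its obstruction is the point. By modularity of $C/\Q$ we have $L(s-\tfrac12,C)=L(s,\pi')$ for the cuspidal representation $\pi'$ of $\PGL_2(\A)$ attached to $C$, so $L(1,C_E)\neq0$ is equivalent to $L(\tfrac12,\pi'\otimes\eta_E)\neq0$; by Waldspurger's theorem (\cref{thm:Waldspurger}) this is, for $E$ satisfying the appropriate local conditions, detected by the non-vanishing of a toric period $\cP_E(\phi)$ on a suitable definite quaternion algebra, equivalently — via \cref{cor:Fourier} — by the non-vanishing of the $|\Delta_E|$-th Fourier coefficient of the weight $\tfrac32$ form $\cW(\phi)$. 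One route is then analytic: a first-moment asymptotic for these central values (in the spirit of \cref{prop:Hecke}) together with a second-moment bound (as in \cite{SWY}) gives, by Cauchy--Schwarz, a \emph{positive proportion} of non-vanishing, but pushing this to density $1$ would require essentially square-root cancellation in the variance, which is not available. The alternative, arithmetic route is to invoke a $p$-converse to the Gross--Zagier--Kolyvagin theorem, reducing $\ord_{s=1}L(s,C_E)=0$ to the vanishing of the corank of $\mathrm{Sel}_{p^\infty}(C_E)$, and then to apply the distribution of Selmer ranks in quadratic-twist families (Heath-Brown, Kane, and Smith's resolution for curves with full rational $2$-torsion and no rational cyclic $4$-isogeny) to force that corank to be $0$ for a density-$1$ subset of $X^+$. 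Either way, the passage from ``positive proportion'' to ``$100\%$'' for an \emph{arbitrary} $C$ is precisely where the conjecture remains unproven: the moment method is too lossy, and the Selmer-rank input currently carries hypotheses on the $2$-torsion of $C$, so no unconditional proof of the full asymptotic density $\tfrac12$ is presently within reach — this is the main obstacle.
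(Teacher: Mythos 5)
You have correctly identified that the statement you were asked to prove is \cref{conj:Goldfeld} itself, which is an open conjecture: the paper offers no proof of it and does not claim one, so there is nothing in the paper to compare your argument against step by step. Your write-up is honest about this, and the skeleton you describe is the standard one: split $X$ by the sign of the functional equation, show each sign class has density $\tfrac12$ (this part is indeed classical, via the formula for $w(C_E)$ in terms of $\eta_E$ evaluated at the conductor and finitely many local conditions), and then face the genuinely open claim that $L(1,C_E)\neq 0$ for $100\%$ of the even-sign twists. Your identification of the obstruction is accurate: first/second moment methods (or Waldspurger plus period estimates) only yield a positive proportion, and the Selmer-distribution route (Smith, after Heath-Brown and Kane, combined with a $p$-converse theorem) carries hypotheses on the $2$-torsion structure of $C$, so neither gives the full density $\tfrac12$ for arbitrary $C$.

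It is worth noting how this differs from what the paper actually does, since the paper only proves results \emph{toward} weaker statements (\cref{conj:WeakGoldfeld}, \cref{conj:AutomGoldfeld}, and the toric-period analogues): under \cref{condition:non-vanish}, \cref{thm:PeriodGoldfeld} and \cref{thm:AutomGoldfeld} give positive-proportion non-vanishing with an explicit lower bound. The paper's mechanism is algebraic rather than analytic or Selmer-theoretic: a congruence argument (\cref{lem:cong}, following \cite{Martin}) produces $\varphi\in\cS_N(\cO)$ with values in $1+3\Z$ when $3\mid\mass(\cO)$, so $\fP_{\iota_0,\fo}(\varphi)\neq0$ whenever $3\nmid h_\fo$; this is combined with lower bounds for the density of imaginary quadratic fields with $3\nmid h_E$ (\cite[Proposition 9.3]{KL}) and with the existence of optimal embeddings for all but finitely many $E$ (\cref{lem:Duke}, via Duke-type equidistribution and subconvexity), and then Waldspurger's theorem converts period non-vanishing into $L$-value non-vanishing. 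So if your goal were the results the paper can actually prove, the missing ingredients in your sketch are precisely the congruence-mod-$3$ construction and the class-number-indivisibility input; for the full \cref{conj:Goldfeld}, your assessment that no unconditional proof is currently available is correct.
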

The following is a weaker version of this conjecture.
See \cite[Conjecture 1.2]{KL},  for example.
\begin{conj}[Weak Goldfeld]\label{conj:WeakGoldfeld}
For an elliptic curve $C$ over $\Q$,
    \[
    \#\{E\in X\mid |\Delta_E|<x,  \ L(1,  C_E)\neq0\}\gg x
    \]
when $x\to\infty$.
\end{conj}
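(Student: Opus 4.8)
\cref{conj:WeakGoldfeld} is a well-known open problem; the plan I would follow for the range that is accessible by the circle of ideas developed above runs as follows. By modularity, $C$ is attached to an automorphic representation $\pi'$ of $\PGL_2(\A)$, say of conductor $M$, with $L(s,C)=L_\fin(s-\tfrac12,\pi')$, so that $L(1,C_E)\neq0$ is equivalent to $L(\tfrac12,\pi'\otimes\eta_E)\neq0$. Assume first that $M$ is square-free and that $L(1,C)\neq0$. By Jacquet--Langlands together with the Shimura decomposition of \cref{sec:Classical}, one produces a definite quaternion algebra $D$ over $\Q$, an Eichler order $\cO\subset D$ with $\disc(\cO)=M$, and a Hecke eigenform $\phi\in\cS_N^\new(\cO)$ whose transfer generates $\pi'$ (this is where $L(1,C)\neq0$ is used). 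By \cref{prop:Fourier} and \cref{cor:Fourier}, for $E\in X(D)$ admitting an optimal embedding into $\cO$ one has $L(1,C_E)\neq0\iff\fP_E(\phi)\neq0$, so the whole question reduces to showing that $\fP_E(\phi)\neq0$ for a positive proportion of $E$.

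The plan for establishing this non-vanishing is the one executed in \cref{thm:AutomGoldfeld}. First, I would invoke the variant of Duke's equidistribution theorem, \cref{lem:Duke}, so that every $E$ satisfying the relevant local conditions at $S_\cO$ and with $|\Delta_E|$ large admits an optimal embedding, $\Emb(\fo_E,\cO)\neq\emptyset$; this already cuts out a positive-density subfamily. Second, I would use the congruence \cref{lem:cong}: under \cref{condition:non-vanish}(b) there is $\varphi\in\cS_N(\cO)$ with values in $1+3\Z$, hence $u_E\fP_E(\varphi)\equiv h_E\pmod 3$, so $\fP_E(\varphi)\neq0$ whenever $3\nmid h_E$. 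By Davenport--Heilbronn in the quantitative form of \cite[Proposition 9.3]{KL}, the imaginary quadratic $E$ with $3\nmid h_E$ have positive density, and imposing the finitely many local conditions at $S_\cO$ preserves positivity. Finally, \cref{condition:non-vanish}(a), that $\cS_N(\cO)$ is a single Galois orbit, transfers the non-vanishing from $\varphi$ to the eigenform $\phi$. Under \cref{condition:non-vanish} this yields \cref{conj:WeakGoldfeld} for the corresponding $C$.

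The main obstacle is precisely that this argument proves the conjecture only under \cref{condition:non-vanish}, and both clauses are substantive restrictions rather than technical conveniences. Clause (b), $3\mid\mass(\cO)$, pins the method to the prime $3$, which is currently the only prime for which the statement ``$p\nmid h_E$ for a positive proportion of imaginary quadratic $E$'' is a theorem; for $p>3$ this is a Cohen--Lenstra--type assertion that is open, so one cannot simply change primes. Clause (a) is the deeper difficulty: when $\cS_N(\cO)$ splits into several Galois orbits, the congruence $\fP_E(\varphi)\equiv h_E\pmod 3$ only detects $\varphi$ and not the individual $\phi$, and there is no general mechanism producing a mod-$3$ congruence concentrated on the maximal ideal attached to $\phi$ (a level-raising or Eisenstein-type congruence localized there would suffice). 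The alternative is to abandon congruences and prove $\fP_E(\phi)\neq0$ for a positive proportion of $E$ directly, via an asymptotic first-moment evaluation of $\sum_E\fP_E(\phi)$ twisted by a suitable mollifier together with the (mollified) second-moment bound from \cite{SWY}; but this is exactly the analytic attack on Goldfeld's conjecture for a fixed $\GL_2$ form, whose own obstruction is the $\log$-loss in the second moment of quadratic twists and the need for a mollifier that is uniform over $\pi'$. I therefore expect the crux to be the construction of an eigensystem-localized mod-$p$ congruence for a usable prime $p$, or equivalently a mollified second-moment bound for toric periods in the $\Delta_E$-aspect; since both remain out of reach, \cref{conj:WeakGoldfeld} stays open in general, and the theorems above should be read as supporting evidence rather than a proof.
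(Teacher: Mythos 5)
You have correctly recognized that \cref{conj:WeakGoldfeld} is stated in the paper as a conjecture and is not proved there; the paper only provides partial evidence via \cref{thm:PeriodGoldfeld} and \cref{thm:AutomGoldfeld}, both of which require \cref{condition:non-vanish}. Your outline of the accessible range matches the paper's actual argument: reduce to $L(\tfrac12,\pi'\otimes\eta_E)\neq0$, transfer to a definite quaternion algebra via Jacquet--Langlands and the Shimura/Waldspurger correspondence (which is where $L(1,C)\neq0$ and square-free conductor enter), use \cref{lem:Duke} to guarantee optimal embeddings for all but finitely many $E$ in the correct local class, apply the Eisenstein congruence of \cref{lem:cong} to get a form $\varphi$ with $u_E\fP_E(\varphi)\equiv h_E\pmod3$, invoke the quantitative Davenport--Heilbronn input \cite[Proposition~9.3]{KL} to find positive density of $E$ with $3\nmid h_E$ compatible with the local constraints, and use clause (a) of \cref{condition:non-vanish} to propagate non-vanishing from $\varphi$ to the eigenform $\phi$. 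Your identification of the two genuine obstructions --- that clause (b) is tied to $p=3$ because positive-density $p\nmid h_E$ is only a theorem for $p=3$, and that clause (a) is needed because the congruence only detects the ambient space rather than a single Hecke eigensystem absent an eigensystem-localized mod-$p$ congruence --- is exactly the state of affairs, and the alternative analytic route you mention (first moment plus mollified second moment from \cite{SWY}) is indeed blocked by the usual second-moment loss. In short, your assessment agrees with the paper: the conjecture remains open, and the theorems are evidence rather than a proof.
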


Since $L$-functions of elliptic curves are automorphic $L$-functions,  one can expect the following.
\begin{conj}[Automorphic (Weak) Goldfeld]\label{conj:AutomGoldfeld}
Let $\pi'$ be an irreducible cuspidal automorphic representation of $\PGL_2(\A)$, 
\begin{itemize}
\item[(1)] The twisted $L$-value $L(\tfrac12,  \pi'\otimes\eta_E)$ does not vanish for 50\% of $E\in X$,  namely:      
    \begin{equation}\label{eq:AutomGoldfeld}
    \lim_{x\to\infty}
    \frac{\#\{E\in X\mid |\Delta_E|<x,  \ L(\tfrac12,  \pi'\otimes\eta_E)\neq0\}}
    {\#\{E\in X\mid |\Delta_E|<x\}}=\frac12.
    \end{equation}
\item[(2)] We have $L(\tfrac12,  \pi'\otimes\eta_E)\neq0$ for positive proportion of $E\in X$,  i.e.  
    \begin{equation}\label{eq:WeakAutomGoldfeld}
    \#\{E\in X\mid |\Delta_E|<x,  \ L(\tfrac12,  \pi'\otimes\eta_E)\neq0\}\gg x
    \end{equation}
when $x\to\infty$.
\end{itemize}
\end{conj}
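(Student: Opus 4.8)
The plan is to establish Theorem~\ref{thm:PeriodGoldfeld} first and then to deduce Theorem~\ref{thm:AutomGoldfeld} from it via Waldspurger's formula. Fix a normalized Hecke eigenform $\phi\in\cS_N^\new(\cO)$, generating $\pi$ with Jacquet--Langlands transfer $\pi'$, and choose $\{\cE_v\}_{v\in S_\cO}$ as in the statement so that, for every $E\in X(\{\cE_v\}_{v\in S_\cO})$, the ramification set of $D$ equals $\{v:\varepsilon(\pi'_v;E_v)=-1\}$ globally (this is automatic away from $S_\cO$, where $\pi'_v$ and $E_v$ are unramified). The heart of the matter is the implication: if $E\in X(\{\cE_v\}_{v\in S_\cO})$ both (i) admits an optimal embedding with respect to $\cO$---or, by \S\ref{subsec:Dependence}, with respect to some order in $\Typ(\cO)$---and (ii) satisfies $3\nmid h_E$, then $\fP_E(\phi)\neq0$, hence $\cP_{\iota_0,E}(\phi)=\tfrac{2u_E}{h_E}\fP_{\iota_0,E}(\phi)\neq0$ for any $\iota_0\in\Emb(\fo_E,\cO)$, hence $\cP_E\not\equiv0$ on $\pi$, which by Theorem~\ref{thm:Waldspurger}(1) forces $L(\tfrac12,\pi'\otimes\eta_E)\neq0$. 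Indeed (ii) together with Condition~\ref{condition:non-vanish} is exactly what is needed to run Proposition~\ref{prop:non-vanish} with $p=3$ and $\fo=\fo_E$ (applied to whichever order in $\Typ(\cO)$ receives the embedding, after which \S\ref{subsec:Dependence} identifies $\fP_E(\phi)$ across $\Typ(\cO)$). So the whole question reduces to counting $E\in X(\{\cE_v\}_{v\in S_\cO})$ with $|\Delta_E|<x$ enjoying both (i) and (ii), and to the relative density of $X(\{\cE_v\}_{v\in S_\cO})$ inside all imaginary quadratic fields.

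For (i) I would appeal to the Duke-type input \cref{lem:Duke}: combining a variant of Duke's equidistribution theorem \cite{Duke} for CM points with the Blomer--Harcos subconvex bound \cite{BH} for $L(\tfrac12,\pi'\otimes\eta_E)$ shows that any $E\in X(D)$ which splits or ramifies at every prime dividing $\level(\cO)$---a condition forced on all $E\in X(\{\cE_v\}_{v\in S_\cO})$ by the choice of $\cE_v$ and \cref{lem:local_emb}---admits an optimal embedding once $|\Delta_E|$ is sufficiently large. The attendant local and global counts of conjugacy classes of optimal embeddings are supplied by \S\ref{subsec:Action}--\S\ref{subsec:Dependence}, which is what makes the statement usable uniformly within one local isomorphism class of orders. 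Thus (i) fails for only finitely many $E\in X(\{\cE_v\}_{v\in S_\cO})$ and is asymptotically free.

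For (ii) I would invoke the Davenport--Heilbronn-type estimate \cite[Proposition~9.3]{KL}: even after imposing the finitely many congruence conditions at $S_\cO$ that cut out $X(\{\cE_v\}_{v\in S_\cO})$, at least a proportion $\tfrac12$ of the fields $E$ in it with $|\Delta_E|<x$ satisfy $3\nmid h_E$ (this uses that the average of $\#\Cl(E)[3]$ is at most $2$, a bound that survives restriction to congruence classes of discriminants). Intersecting with (i), at least a proportion $\tfrac12-o(1)$ of $E\in X(\{\cE_v\}_{v\in S_\cO})$ with $|\Delta_E|<x$ have $\cP_E(\phi)\neq0$; since $\#\{E\in X(\{\cE_v\}_{v\in S_\cO}):|\Delta_E|<x\}\gg x$ this already gives Theorem~\ref{thm:PeriodGoldfeld}. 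To upgrade to Theorem~\ref{thm:AutomGoldfeld} it then suffices to show that $X(\{\cE_v\}_{v\in S_\cO})$ has relative density $\prod_{p\mid\disc(\cO)}n_p$ among imaginary quadratic fields, where $n_p$ is the density among fundamental discriminants of $\{E:E_p\simeq\cE_p\}$. A finite $p$-adic calculation of $\varepsilon(\pi'_p;E_p)$---which depends only on whether $p$ is inert, split, or ramified in $E$, with an extra split among ramified extensions at $p=2$, and with only field extensions admissible at $p\mid\disc(D)$ versus the split algebra also admissible at $p\mid\level(\cO)$---combined with the standard local densities ($\tfrac1{p+1}$, resp.\ $\tfrac p{p+1}$, for fundamental discriminants divisible by, resp.\ prime to, an odd $p$) yields $n_p=\tfrac{p+2}{2(p+1)}$ for odd $p$ and $n_2=\tfrac1{24}$. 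Multiplying by the $\tfrac12$ from (ii) gives the bound $\tfrac12\prod_{p\mid\disc(\cO)}n_p$.

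The step I expect to be the main obstacle is (i): showing that optimal embeddings exist for all but finitely many $E\in X(\{\cE_v\}_{v\in S_\cO})$. Duke's theorem by itself is not enough---it equidistributes CM points but says nothing about \emph{optimality}, i.e.\ about the index $[\iota(E)\cap\cO:\iota(\fo_E)]$---so the genuinely analytic content is to insert the subconvex bound into the theta-lift/Waldspurger proof of equidistribution with enough uniformity that ``the CM cycle of discriminant $\Delta_E$ becomes equidistributed on $G_\Q\bs G_\A/K$'' implies ``it meets the open subset cut out by optimality'', and to do all this compatibly with the conjugacy-class bookkeeping so that one may pass freely between the orders in $\Typ(\cO)$. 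By contrast the congruence ingredient is already proved (Proposition~\ref{prop:non-vanish}), the $3$-indivisibility count is off the shelf, and the density of $X(\{\cE_v\}_{v\in S_\cO})$ is a bounded computation.
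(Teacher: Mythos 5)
What you were given is Conjecture~\ref{conj:AutomGoldfeld}, which is an open conjecture: the paper does not prove it, and neither does your proposal. You have instead reconstructed the proofs of Theorems~\ref{thm:PeriodGoldfeld} and~\ref{thm:AutomGoldfeld}, and this is in fact the paper's own stance---those theorems are offered only as \emph{evidence} for the conjecture. It is worth saying this explicitly: the theorems treat only those $\pi'$ that arise as Jacquet--Langlands transfers from $\cS_N^\new(\cO)$ for an Eichler order $\cO$ satisfying the restrictive Condition~\ref{condition:non-vanish}, whereas the conjecture is for an arbitrary cuspidal $\pi'$ on $\PGL_2(\A)$; and even in that restricted setting the lower bound $\tfrac12\prod n_p$ is well below the $\tfrac12$ of part (1), which is not approached by any result in the paper.

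As a reconstruction of Theorems~\ref{thm:PeriodGoldfeld} and~\ref{thm:AutomGoldfeld}, your outline matches the paper's (optimal embeddings for all but finitely many $E$ via Lemma~\ref{lem:Duke} and Corollaries~\ref{cor:optimal-2}--\ref{cor:optimal-3}; non-vanishing from $3\nmid h_E$ and Proposition~\ref{prop:non-vanish}; density from the Taya-type bound in [KL, Prop.~9.3]), and your identification of the optimal-embedding step as the genuine analytic input is correct. One imprecision in the deduction of Theorem~\ref{thm:AutomGoldfeld}: you assert that $X(\{\cE_v\}_{v\in S_\cO})$, for a fixed admissible tuple, has density $\prod_p n_p$. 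For a fixed $\cE_p$ the local density is $\tfrac{p}{2(p+1)}$ (split or inert) or $\tfrac{1}{p+1}$ (a fixed ramified extension), none of which equals $n_p=\tfrac{p+2}{2(p+1)}$. What does have density $\prod n_p$ is the \emph{union} of $X(\{\cE'_v\})$ over all admissible tuples, i.e.\ the set of $E\in X(D)$ splitting or ramifying at every $p\mid\level(\cO)$---this is the set the paper's $N(x,\cO)$ cuts out (before imposing $3\nmid h_E$). Your $p$-adic calculation at the end of the paragraph is actually a calculation for that union (you sum over all admissible behaviors at $p$), so the arithmetic is right; but the framing conflates a single $X(\{\cE_v\})$ with the union and, if read literally, would deliver a smaller constant. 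The paper avoids this by working directly with $N(x,\cO)$ and summing the bound from [KL, Prop.~9.3] over valid residues modulo $\disc(\cO)^2 k(\cO)$, rather than separating out the two factors as you do.
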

From \cref{thm:Waldspurger},  we see that the following conjecture implies \eqref{eq:WeakAutomGoldfeld} if $L(\tfrac12,  \pi')\neq0$.
\begin{conj}[Weak Goldfeld for Toric Periods]\label{conj:WeakPeriodGoldfeld}
We remove the assumption that $D$ is definite.
Let $\pi$ be an irreducible cuspidal automorphic representation of $G_\A$ with $L(\tfrac12,  \pi)\neq0$ and $\pi'=\otimes_v\pi'_v$ its Jacquet-Langlnds transfer to $\PGL_2(\A)$.
We take $\{\cE_v\}_{v\in S_\cO}\in \prod_{v\in S_\cO}X(D_v)$ so that the ramification set of $D$ coincides with $\{v\in S_\cO \mid \varepsilon(\pi'_v; \cE_v)=-1\}$.
Then $\cP_E\not\equiv0$ on $\pi$ for positive proportion of $E\in X(\{\cE_v\}_{v\in S_\cO})$,  i.e.  
    \begin{equation}\label{eq:WeakPeriodGoldfeld}
    \#\{E\in X(\{\cE_v\}_{v\in S_\cO})\mid |\Delta_E|<x,  \ \cP_E\not\equiv0  
    \text{ \rm on $\pi$}\}
    \gg x
    \end{equation}
when $x\to\infty$.
\end{conj}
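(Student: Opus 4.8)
The plan is to reduce the conjecture, via Waldspurger's theorem (\cref{thm:Waldspurger}), to a non-vanishing statement for the central twisted values $L(\tfrac12,\pi'\otimes\eta_E)$ as $E$ runs through the subfamily $X(\{\cE_v\}_{v\in S_\cO})$, and then to invoke the analytic theory of quadratic twists of $\GL_2$ $L$-functions. The first step is to check that for \emph{every} $E\in X(\{\cE_v\}_{v\in S_\cO})$ the local root-number hypothesis of \cref{thm:Waldspurger}(1) holds, i.e.\ that the ramification set of $D$ equals $\{v\mid\varepsilon(\pi'_v;E_v)=-1\}$. At the places $v\in S_\cO$ this is immediate from the choice of $\{\cE_v\}$, since $\varepsilon(\pi'_v;E_v)$ depends only on the local algebra $E_v\simeq\cE_v$. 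At the remaining places $v\notin S_\cO$ the representation $\pi'_v$ is unramified, and a direct computation using the inductivity of $\varepsilon$-factors under base change (the contribution $\varepsilon(\tfrac12,\pi'_v\otimes\eta_{E,v})=\eta_{E,v}(-1)$ being cancelled against the Langlands constant, which satisfies $\lambda_{E_v/\Q_v}(\psi_v)^{2}=\eta_{E,v}(-1)$) shows that $\varepsilon(\pi'_v;E_v)=+1$ there, whatever the behaviour of $E$ at $v$. Thus the local hypothesis is satisfied by the whole family.

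Granting this, \cref{thm:Waldspurger}(1) gives, for $E\in X(\{\cE_v\}_{v\in S_\cO})$,
\[
\cP_E\not\equiv0\text{ on }\pi\ \Longleftrightarrow\ L(\tfrac12,\pi')\,L(\tfrac12,\pi'\otimes\eta_E)\neq0 .
\]
As $\pi'$ is the Jacquet--Langlands transfer of $\pi$ we have $L(s,\pi')=L(s,\pi)$, so the hypothesis $L(\tfrac12,\pi)\neq0$ removes the first factor and the conjecture becomes equivalent to
\[
\#\{E\in X(\{\cE_v\}_{v\in S_\cO})\mid |\Delta_E|<x,\ L(\tfrac12,\pi'\otimes\eta_E)\neq0\}\gg x ,
\]
a refined version of the weak automorphic Goldfeld statement \eqref{eq:WeakAutomGoldfeld} in which the discriminants are additionally constrained by the finitely many congruence conditions encoding $E_v\simeq\cE_v$ for $v\in S_\cO$. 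Since $X(\{\cE_v\}_{v\in S_\cO})$ is of positive density in $X$, any method producing a positive proportion of non-vanishing quadratic twists inside an arithmetic subfamily would suffice.

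The main obstacle is precisely this last non-vanishing assertion, which is one of the central open problems on quadratic twists of $\GL_2$ $L$-functions; what is currently available falls short of $\gg x$ in general. The argument of Ono--Skinner (modularity together with the Shimura--Waldspurger correspondence) gives only $\gg x/\log x$ fundamental discriminants with $L(\tfrac12,\pi'\otimes\eta_E)\neq0$ when $\pi'$ is holomorphic, and a genuine positive proportion is known only in favourable cases — typically when the half-integral weight form attached to $\pi'$ by the Shimura correspondence has a non-zero Fourier coefficient, so that Kohnen's formula combined with a Duke-type equidistribution as in \cref{lem:Duke} (or with a mollified first- and second-moment computation) yields positively many non-vanishing twists; for non-holomorphic $\pi'$ even less is known. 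I would therefore expect unconditional progress to require a new input on mollified moments of $L(\tfrac12,\pi'\otimes\eta_E)$ over the constrained family, or a multiple-Dirichlet-series analysis in the spirit of recent work on moments of quadratic twists. It is worth noting why the congruence method behind \cref{thm:PeriodGoldfeld} is unavailable here: for indefinite $D$ the representation $\pi$ has no model on a finite-dimensional space of algebraic modular forms, so there is no analogue of \cref{lem:cong}, and one is forced entirely onto the analytic side.
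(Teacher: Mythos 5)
The statement you were asked about is \cref{conj:WeakPeriodGoldfeld}, which the paper states as a conjecture and does not prove; so there is no proof of it in the paper to compare against, and your write-up, which is a reduction plus an honest assessment that the key analytic input is open, is the appropriate response rather than a gap. Your reduction is sound and matches the paper's own framing: the local verification that $\varepsilon(\pi'_v;E_v)=+1$ at every $v\notin S_\cO$ (so that the whole family $X(\{\cE_v\}_{v\in S_\cO})$ satisfies the hypothesis of \cref{thm:Waldspurger}(1)) is exactly the fact the authors use in the proof of their proposition relating \cref{conj:AutomGoldfeld}(1) to the period statement, and the resulting equivalence with a positive-proportion non-vanishing of $L(\tfrac12,\pi'\otimes\eta_E)$ over the congruence-constrained subfamily is how the paper's implication diagram positions this conjecture relative to the automorphic weak Goldfeld conjecture.

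One correction of emphasis: you say the congruence method behind \cref{thm:PeriodGoldfeld} is unavailable because $D$ is indefinite, but the conjecture as stated covers definite $D$ as well, and in that case the paper does establish it (in the stronger form of non-vanishing of $\cP_E(\phi)$ for a fixed eigenform) whenever \cref{condition:non-vanish} holds. The route there is algebraic rather than analytic: the Eisenstein-type congruence of \cref{lem:cong} produces $\varphi\in\cS_N(\cO)$ with values in $1+3\Z$, the single-Galois-orbit hypothesis transfers non-vanishing from $\varphi$ to $\phi$ (\cref{prop:non-vanish}) whenever $3\nmid h_E$, the existence of optimal embeddings for all but finitely many $E$ in the family is supplied by the Duke-type equidistribution argument (\cref{cor:optimal-3}), and the positive proportion of imaginary quadratic $E$ with $3\nmid h_E$ and the required local splitting behaviour comes from \cref{cor:density-2} (after Kriz--Li/Taya). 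So the paper's evidence for the conjecture bypasses moments or mollification entirely, at the cost of the restrictive hypothesis on $\cO$; your analytic reduction is the natural route in general (and the only visible one for indefinite $D$), but for definite $D$ under \cref{condition:non-vanish} the conjecture is a theorem of the paper, not an open problem.
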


\begin{remark}
Obviously,  the non-vanishing of the toric period of a fixed automorphic form we consider in \cref{thm:PeriodGoldfeld} is stronger than the non-vanishing of the linear form $\cP_E$ on $\pi$ which we consider in \cref{conj:WeakPeriodGoldfeld} and \cref{conj:PeriodGoldfeld}.
We do not know how to formulate such a stronger non-vanishing problem for general cuspidal automorphic representations.
\end{remark}

The following diagram illustrates the relation among these conjectures.
    \[
    \xymatrix @R=10pt @C=50pt{
      \setlength{\fboxsep}{0pt}
    &
    \setlength{\fboxsep}{0pt}
    \fbox{\begin{minipage}{115pt}
    \begin{tabular}{c}
    Automorphic Goldfeld \\[-3pt]
    (\cref{conj:AutomGoldfeld} (1))
    \end{tabular}
    \end{minipage}}\ar @{=>}[d] \ar @{=>}[r]&
    \setlength{\fboxsep}{0pt}
    \fbox{\begin{minipage}{87pt}
    \begin{tabular}{c}
    Goldfeld \\[-3pt]
    (\cref{conj:Goldfeld})
    \end{tabular}
    \end{minipage}} \ar @{=>}[d]       \\
    \setlength{\fboxsep}{0pt}
    \fbox{\begin{minipage}{95pt}
    \begin{tabular}{c}
    Weak Goldfeld \\[-3pt]
    for Toric Periods \\[-3pt]
    (\cref{conj:WeakPeriodGoldfeld})
    \end{tabular}
    \end{minipage}} \ar @{=>}^(0.47){\text{if $L(\tfrac12,  \pi)\neq0$}}[r]&
    \setlength{\fboxsep}{0pt}
    \fbox{\begin{minipage}{110pt}
    \begin{tabular}{c}
    Automorphic \\[-3pt]
    Weak Goldfeld \\[-3pt]
    (\cref{conj:AutomGoldfeld} (2))
    \end{tabular}
    \end{minipage}} \ar @{=>}[r] & 
    \setlength{\fboxsep}{0pt}
    \fbox{\begin{minipage}{93pt}
    \begin{tabular}{c}
    Weak Goldfeld \\[-3pt]
    (\cref{conj:WeakGoldfeld})
    \end{tabular}
    \end{minipage}} 
    }\]

Now return to a Hecke eigenform $\phi\in\cS_N^\new(\cO)$,  in particular $D$ is definite.
We formulate an analogue of \cref{conj:Goldfeld} for toric periods.
Let $\pi$ be an irreducible cuspidal automorphic representation of $G_\A$ generated by $\phi$ and $\pi'=\otimes_v\pi'_v$ its Jacquet-Langlands transfer to $\PGL_2(\A)$.
We take $\{\cE_v\}_{v\in S_\cO}\in \prod_{v\in S_\cO}X(D_v)$ so that the ramification set of $D$ equals $\{v\in S_\cO \mid \varepsilon(\pi'_v; \cE_v)=-1\}$.

\begin{conj}[Goldfeld for toric periods]\label{conj:PeriodGoldfeld}
Let $\pi=\otimes_v\pi_v$ and $\{\cE_v\}_{v\in S_\cO}$ be as above.
Assume that $L(\tfrac12,  \pi)\neq0$.
The toric period $\cP_E$ is non-zero on $\pi$ for 100\% of $E\in X(\{\cE_v\}_{v\in S_\cO})$,  namely:

    \begin{equation}\label{eq:PeriodGoldfeld}
    \lim_{x\to\infty}
    \frac{\#\{E\in X(\{\cE_v\}_{v\in S_\cO})\mid |\Delta_E|<x,  
    \ \cP_E\not\equiv0  \text{ \rm on $\pi$}\}}
    {\#\{E\in X(\{\cE_v\}_{v\in S_\cO})\mid |\Delta_E|<x\}}=1.
    \end{equation}
\end{conj}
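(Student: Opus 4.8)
\medskip
\noindent\emph{A strategy toward \cref{conj:PeriodGoldfeld}.}
The first step is to reduce the statement to a purely analytic one about $L$-values. Since we assume $L(\tfrac12,\pi)\neq0$, hence $L(\tfrac12,\pi')\neq0$, the Waldspurger criterion \cref{thm:Waldspurger}(1) shows that for every $E\in X(\{\cE_v\}_{v\in S_\cO})$ the local root number conditions are automatically satisfied, so that $\cP_E\not\equiv0$ on $\pi$ is \emph{equivalent} to $L(\tfrac12,\pi'\otimes\eta_E)\neq0$. When $\cO$ is a maximal order, \cref{cor:Fourier} translates this further into the non-vanishing of the fundamental-discriminant Fourier coefficient $a_\phi(|\Delta_E|)$ of the half-integral weight form $\cW(\phi)$. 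Thus, modulo these dictionaries, \cref{conj:PeriodGoldfeld} is exactly the density-one Goldfeld statement for the quadratic-twist family $\{L(\tfrac12,\pi'\otimes\eta_E)\}_{E}$, with $E$ restricted to the prescribed local classes $\cE_v$; the restriction is a finite union of congruence conditions on $\Delta_E$ and is harmless for all of the analytic inputs below, being removed by standard sieving.

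The main engine I would employ is the mollified moment method. Fix a standard mollifier $M(d)$, a Dirichlet polynomial of length $X^\theta$ adapted to $L(s,\pi')$, and evaluate asymptotically, over $E\in X(\{\cE_v\}_{v\in S_\cO})$ with $|\Delta_E|\le X$, the mollified first moment $\sum_E M(|\Delta_E|)\,L(\tfrac12,\pi'\otimes\eta_E)$ and the mollified second moment $\sum_E M(|\Delta_E|)^2\,L(\tfrac12,\pi'\otimes\eta_E)^2$. The required toolkit is the approximate functional equation for $L(\tfrac12,\pi'\otimes\eta_E)$ and its square, Poisson summation over $\Delta_E$ in residue classes modulo the conductor, and the large sieve for real characters; these should give a main term of size $\asymp X$ for the first moment and $\asymp X(\log X)^{c(\theta)}$ for the mollified second moment, with the log-power $c(\theta)$ shrinking as $\theta$ grows. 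Cauchy--Schwarz then yields
\[
\#\{E : |\Delta_E|\le X,\ L(\tfrac12,\pi'\otimes\eta_E)\neq0\}\ \ge\ \frac{\left(\sum_E M(|\Delta_E|)L(\tfrac12,\pi'\otimes\eta_E)\right)^2}{\sum_E M(|\Delta_E|)^2 L(\tfrac12,\pi'\otimes\eta_E)^2},
\]
and the goal is to show the right-hand side is $(1-o(1))\#\{E:|\Delta_E|\le X\}$ as the mollifier length approaches the conductor. This is also the natural place to feed in, and go beyond, the mean value formula for $\{|\fP_E(\phi)|^2\}_E$ of \cite{SWY}: that formula is essentially the $\theta=0$ case of the second moment, and an unmollified fourth-moment bound $\sum_E|\fP_E(\phi)|^4\ll X(\log X)^B$ already yields non-vanishing for a proportion $\gg(\log X)^{-B}$, which mollification is meant to upgrade to a fixed positive proportion and, ideally, to $100\%$.

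Two complementary routes should be kept in view. First, a Selberg-type central limit theorem for $\log L(\tfrac12,\pi'\otimes\eta_E)$ over the family—approximate Gaussianity with variance $\sim\log\log X$—would give density one \emph{directly} rather than through a ratio of moments, and would simultaneously refine the symmetry heuristic of \cref{conj:symmetry}: after dividing $\fP_E(\phi)$ by the power of $|\Delta_E|$ predicted by \cref{thm:Waldspurger}(2) one expects a log-normal-type limiting law whose only atom, at $0$, carries zero mass. Second, when the Hecke eigenvalues of $\phi$ are rational so that $L(\tfrac12,\pi'\otimes\eta_E)$ is the $L$-function of a quadratic twist of an elliptic curve, one can attempt the arithmetic route via the distribution of $2$-Selmer groups in quadratic-twist families together with the root-number equidistribution, aiming to force analytic rank $0$ for $100\%$ of the admissible $E$.

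The hard part is the mollifier-length barrier. All current technology forces $\theta<1$—both the large sieve input and the off-diagonal analysis of the second moment degrade beyond that point—and with $\theta<1$ one recovers only a \emph{positive} proportion, which is precisely the strength of \cref{thm:AutomGoldfeld}, not density one. Pushing $\theta$ to $1$ is of the same depth as Goldfeld's conjecture itself, and the same is true of the high-moment control needed for the central limit theorem and of the full Selmer-distribution statement; each appears to require a genuinely new idea, perhaps exploiting the extra rigidity of the definite-quaternion/theta realization $a_\phi(|\Delta_E|)=c(E)\fP_E(\phi)$ of \cref{cor:Fourier}, or the geometry of Heegner-type points, or a resolution of the relevant moment asymptotics. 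Accordingly, the realistic deliverable of this plan is a quantitative improvement of the proportion in \cref{thm:AutomGoldfeld} toward $1$, with \cref{conj:PeriodGoldfeld} itself remaining open.
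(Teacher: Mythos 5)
Your reduction step is sound: since $L(\tfrac12,\pi)=L(\tfrac12,\pi')\neq0$ and, for every $E\in X(\{\cE_v\}_{v\in S_\cO})$, the local root numbers $\varepsilon(\pi'_v;E_v)$ are $-1$ exactly on the ramification set of $D$ (they equal $\varepsilon(\pi'_v;\cE_v)$ for $v\in S_\cO$ and $+1$ elsewhere because $\pi'_v$ is unramified there), \cref{thm:Waldspurger}(1) does make $\cP_E\not\equiv0$ on $\pi$ equivalent to $L(\tfrac12,\pi'\otimes\eta_E)\neq0$ throughout the family. But from that point on what you offer is a research programme, not a proof, and you say so yourself: the mollified-moment machinery with mollifier length $X^\theta$, $\theta<1$, is structurally capped at a fixed positive proportion (this is exactly the strength of \cref{thm:AutomGoldfeld}), the central limit theorem for $\log L(\tfrac12,\pi'\otimes\eta_E)$ is not known at the central point, and the Selmer-theoretic route is likewise open. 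So the density-one assertion \eqref{eq:PeriodGoldfeld} is never established; the missing ingredient is precisely the full-strength Goldfeld-type input, which is the whole content of the statement.

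For comparison, the paper does not prove this statement either -- it is stated as a conjecture -- and its only rigorous content attached to it is the proposition immediately following, which derives \eqref{eq:PeriodGoldfeld} \emph{conditionally} from the $50\%$ statement \eqref{eq:AutomGoldfeld} for $\pi'$. That argument is purely soft: the subfamily with global root number $+1$ has density $\tfrac12$ among all quadratic fields, so $50\%$ non-vanishing over all $E$ forces $100\%$ non-vanishing within that subfamily; decomposing it into the finitely many local classes $\{\cE'_v\}_{v\in S_\cO}$ with $\prod_v\varepsilon(\pi'_v;\cE'_v)=1$, each of positive relative density, the weighted average being $1$ forces the ratio to be $1$ in each class, and Waldspurger then converts this into the toric-period statement for the distinguished class $\{\cE_v\}$. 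If you want to salvage something from your write-up, the honest deliverable is either that conditional implication (which you essentially bypass by attacking the analytic statement head-on) or an unconditional positive-proportion bound, i.e.\ \cref{thm:AutomGoldfeld}; as a proof of \cref{conj:PeriodGoldfeld} itself, the proposal has a genuine and acknowledged gap.
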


One can check that \cref{conj:PeriodGoldfeld} follows from \cref{conj:WeakPeriodGoldfeld}.

\begin{prop}
Let $\pi=\otimes_v\pi_v$ and $\{\cE_v\}_{v\in S_\cO}$ be as above.
Suppose that $L(\tfrac12,  \pi)\neq0$ and $\pi'$ satisfies \eqref{eq:AutomGoldfeld}.
Then \eqref{eq:PeriodGoldfeld} holds for $\pi$ and $\{\cE_v\}_{v\in S_\cO}$.
\end{prop}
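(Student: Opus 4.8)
The plan is to deduce \eqref{eq:PeriodGoldfeld} from \eqref{eq:AutomGoldfeld} by combining \cref{thm:Waldspurger} with a computation of the root number $\varepsilon(\tfrac12,\pi'\otimes\eta_E)$; the idea is that, $D$ being definite, the whole family $X(\{\cE_v\}_{v\in S_\cO})$ sits inside a single ``root number $+1$'' class, and that class already has density exactly $\tfrac12$ in $X$, so the $50\%$ of \eqref{eq:AutomGoldfeld} is forced to be absorbed by it. First I would check that for \emph{every} $E\in X(\{\cE_v\}_{v\in S_\cO})$ the local hypothesis of \cref{thm:Waldspurger} (1) is automatically satisfied, i.e.\ the ramification set of $D$ equals $\{v\mid\varepsilon(\pi'_v;E_v)=-1\}$. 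At the places $v\in S_\cO$ this is the defining property of $\{\cE_v\}_{v\in S_\cO}$, since $E_v\simeq\cE_v$. At $v\notin S_\cO$ the component $\pi'_v$ is unramified and $v$ is split in $D$; writing $\varepsilon(\pi'_v;E_v)=\varepsilon(\pi'_v)\,\varepsilon(\tfrac12,\pi'_v\otimes\eta_{E,v})\,\eta_{E,v}(-1)$ (the Langlands constant of $E_v/\Q_v$ squares to $\eta_{E,v}(-1)$) and carrying out the short local computation gives $\varepsilon(\pi'_v)=1$ and $\varepsilon(\tfrac12,\pi'_v\otimes\eta_{E,v})=\eta_{E,v}(-1)$, so $\varepsilon(\pi'_v;E_v)=\eta_{E,v}(-1)^2=1$; thus both sides omit $v$. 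Since $L(\tfrac12,\pi')=L(\tfrac12,\pi)\neq0$, \cref{thm:Waldspurger} (1) then yields, for such $E$, the equivalence $\cP_E\not\equiv0$ on $\pi\iff L(\tfrac12,\pi'\otimes\eta_E)\neq0$, so it is enough to prove $L(\tfrac12,\pi'\otimes\eta_E)\neq0$ for $100\%$ of $E\in X(\{\cE_v\}_{v\in S_\cO})$.

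Taking the product of those local identities over all $v$ and using $\prod_v\eta_{E,v}(-1)=\eta_E(-1)=1$ gives $\varepsilon(\tfrac12,\pi'\otimes\eta_E)=\varepsilon(\tfrac12,\pi')\prod_{v\in S_\cO}\varepsilon(\pi'_v;E_v)$; in particular this root number depends only on $(E_v)_{v\in S_\cO}$, hence is constant on each family $X(\{\cE'_v\}_{v\in S_\cO})$ with $\{\cE'_v\}\in\prod_{v\in S_\cO}X_v$, and these families partition $X$. For $E\in X(\{\cE_v\}_{v\in S_\cO})$ the product $\prod_{v\in S_\cO}\varepsilon(\pi'_v;\cE_v)$ equals $+1$ because $D$ ramifies at an even number of places; as $L(\tfrac12,\pi')\neq0$ forces $\varepsilon(\tfrac12,\pi')=+1$, we conclude $\varepsilon(\tfrac12,\pi'\otimes\eta_E)=+1$ for every $E$ in our family, so there is no forced vanishing in it.

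Next I would show that the ``root number $+1$'' families have total density exactly $\tfrac12$ in $X$. Since $D$ is definite, $\infty\in S_\cO$, and since $\phi$ has trivial weight $\pi'_\infty$ is the weight-$2$ discrete series of $\PGL_2(\R)$; a direct computation gives $\varepsilon(\pi'_\infty;E_\infty)=+1$ for $E$ real and $-1$ for $E$ imaginary (indeed $\varepsilon(\pi'_\infty;\R\times\R)=\varepsilon(\pi'_\infty)^2=1$ while $\varepsilon(\pi'_\infty;\C)=\varepsilon(\pi'_\infty)\varepsilon(\pi'_\infty\otimes\sgn)\cdot(-1)=-1$, using $\pi'_\infty\otimes\sgn\simeq\pi'_\infty$). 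As the real and the imaginary fundamental discriminants are equinumerous in the ordering by $|\Delta_E|$, and the real/imaginary type of $E$ is independent of $(E_v)_{v\in S_\cO,\,v<\infty}$, the set $\{E\in X\mid\varepsilon(\tfrac12,\pi'\otimes\eta_E)=+1\}$ has density $\tfrac12$ in $X$; by the previous paragraph it is precisely the union of those families $X(\{\cE'_v\}_{v\in S_\cO})$ carrying root number $+1$, and on the complementary families $L(\tfrac12,\pi'\otimes\eta_E)=0$ identically. Now \eqref{eq:AutomGoldfeld} says $\{E\mid L(\tfrac12,\pi'\otimes\eta_E)\neq0\}$ also has density $\tfrac12$; being contained in the union of the ``$+1$'' families, of total density $\tfrac12$, it must have full density inside each of them. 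Our family $X(\{\cE_v\}_{v\in S_\cO})$ is one such family and has positive density, so $L(\tfrac12,\pi'\otimes\eta_E)\neq0$ for $100\%$ of its members, and the first paragraph converts this into \eqref{eq:PeriodGoldfeld}.

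The step I expect to require the most care is the epsilon-factor bookkeeping: the identity for $\varepsilon(\pi'_v;E_v)$ at unramified places (notably at $v=2$, where $\eta_{E,v}$ can be ramified) and the archimedean evaluation, together with the elementary but not wholly automatic point that each family $X(\{\cE'_v\}_{v\in S_\cO})$ has a well-defined positive natural density and that these densities sum to $1$. The genuinely automorphic content is confined to \cref{thm:Waldspurger} and to the hypothesis \eqref{eq:AutomGoldfeld}.
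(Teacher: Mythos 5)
Your proposal is correct and follows essentially the same route as the paper: both arguments use that $\varepsilon(\pi'_\infty;\R\times\R)=-\varepsilon(\pi'_\infty;\C)$ together with the equinumerosity of real and imaginary discriminants to see that the root-number-$+1$ locus has density $\tfrac12$, decompose it into the families $X(\{\cE'_v\}_{v\in S_\cO})$ with $\prod_{v\in S_\cO}\varepsilon(\pi'_v;\cE'_v)=1$, deduce from \eqref{eq:AutomGoldfeld} that the nonvanishing set has full density in each such family, and then convert to \eqref{eq:PeriodGoldfeld} via \cref{thm:Waldspurger}~(1). The only difference is that you spell out the local $\varepsilon$-factor checks at unramified and archimedean places that the paper simply asserts, which is fine.
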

 
\begin{proof}
For a finite set $S$ of finite places of $\Q$ and $\{\cE'_v\}_{v\in S}\in \prod_v X_v$,  the following is well-known:
    \[
    \lim_{x\to\infty}\frac{\#\{E\in X(\{\cE'_v\}_{v\in S}) \mid 0<\Delta_E<x\}}
    {\#\{E\in X(\{\cE'_v\}_{v\in S}) \mid -x<\Delta_E<0\}}=1.
    \]
Note that $\varepsilon(\pi'_\infty; \R\times\R)=-\varepsilon(\pi'_\infty; \C)$ since $\pi'_\infty\simeq\pi'_\infty\otimes\sgn$,  where $\sgn$ is the sign character on $\GL_2(\R)$.
Hence we see that
    \[
    \lim_{x\to\infty}\frac{\#\{E\in X \mid |\Delta_E|<x, \ \varepsilon(\pi'; E)=1\}}
    {\#\{E\in X \mid |\Delta_E|<x, \ \varepsilon(\pi'; E)=-1\}}=1.
    \]
From this it follows that \eqref{eq:AutomGoldfeld} for $\pi'$ is equivalent to 
\begin{equation}\label{eq:PeriodGoldfeld-2}
    \lim_{x\to\infty}
    \frac{\#\{E\in X\mid |\Delta_E|<x,  \ L(\tfrac12,  \pi'\otimes\eta_E)\neq0\}}
    {\#\{E\in X \mid |\Delta_E|<x, \ \varepsilon(\pi'; E)=1\}}=1
\end{equation}
Since $\varepsilon(\pi'_v; \cE'_v)=1$ for any $v\not\in S_\cO$ and $\cE_v\in X_v$,  the set in the denominator decomposes as 
\begin{equation}\label{eq:PeriodGoldfeld-3}
    \coprod_{\substack{\{\cE'_v\}_{v\in S_\cO} \\
    \prod_{v\in S_\cO}\varepsilon(\pi'_v;\cE'_v)=1}}
    \{E\in X(\{\cE'_v\}_{v\in S_\cO}) \mid |\Delta_E|<x\}.
\end{equation}
Similarly we have a decomposition of the set in the numerator:
    \[
    \coprod_{\substack{\{\cE'_v\}_{v\in S_\cO} \\
    \prod_{v\in S_\cO}\varepsilon(\pi'_v;\cE_v)=1}}
    \{E\in X(\{\cE'_v\}_{v\in S_\cO}) \mid |\Delta_E|<x,  \ 
    L(\tfrac12,  \pi'\otimes\eta_E)\neq0\}.
    \]
Thus we can rewrite \eqref{eq:PeriodGoldfeld-2} as
\begin{align*}
    1=\lim_{x\to\infty}\sum_{\substack{\{\cE'_v\}_{v\in S_\cO} \\
    \prod_{v\in S_\cO}\varepsilon(\pi'_v;\cE'_v)=1}}
    &\frac{\#\{E\in X(\{\cE'_v\}_{v\in S_\cO})\mid 
    |\Delta_E|<x,  \ L(\tfrac12,  \pi'\otimes\eta_E)\neq0\}}
    {\#\{E\in X(\{\cE'_v\}_{v\in S_\cO}) \mid |\Delta_E|<x\}} \\
    &\hspace{80pt} \times\frac{\#\{E\in X(\{\cE'_v\}_{v\in S_\cO})\mid 
    |\Delta_E|<x\}}
    {\#\{E\in X \mid |\Delta_E|<x,  \ \varepsilon(\pi'; E)=1\}}.
\end{align*}
From \eqref{eq:PeriodGoldfeld-3},  this is equivalent to
    \[
    \lim_{x\to\infty}
    \frac{\#\{E\in X(\{\cE'_v\}_{v\in S_\cO})\mid 
    |\Delta_E|<x,  \ L(\tfrac12,  \pi'\otimes\eta_E)\neq0\}}
    {\#\{E\in X(\{\cE'_v\}_{v\in S_\cO}) \mid |\Delta_E|<x\}}=1.
    \]
It follows from \cref{thm:Waldspurger} (1) that for $\{\cE_v\}_{v\in S_\cO}$,  the set in the numerator equals
    \[
    \{E\in X(\{\cE_v\}_{v\in S_\cO})\mid 
    |\Delta_E|<x,  \ \cP_E \not\equiv0 \text{ on $\pi$}\}.
    \]
This completes the proof.
\end{proof}

\example\label{ex:19-2}
Suppose $\disc(D)=\disc(\cO)=19$.
In this case,  \cref{condition:non-vanish} is satisfied as we have seen in \cref{ex:19-1}.
Let $\pi$ be the cuspidal automorphic representation generated by a non-zero element of $\cS_N(\cO)=\cS_N^\new(\cO)$.
Then \cref{thm:AutomGoldfeld} shows 
    \[
    \lim_{x\to\infty}
    \frac{\#\{E\in X\mid -x< \Delta_E<0,  \ L(\tfrac12,  \pi'\otimes\eta_E)\neq0\}}
    {\#\{E\in X\mid -x<\Delta_E<0\}} \
    \geq \ \frac{21}{80}.
    \]
Consider the elliptic curve $C : y^2+y=x^3+x^2-9x-15$ (19a1 in Cremona's labeling).
We see that $L(\tfrac12,  \pi\otimes\eta_E)\neq0$ is equivalent to $L(1,  C_E)\neq0$ for any quadratic field $E$.
Thus,  more than $\frac{21}{80}=26.25\%$ of imaginary quadratic twists of $C$ have analytic rank 0 (compare the lower bound $\frac{19}{120}=15.833\%$ in \cite[Example 9.9]{KL}).

\example\label{ex:65.2.a.c}
Suppose $\disc(D)=13$ and $\disc(\cO)=65$.
In this case,  the type number is $t_\cO=3$,  $\cS_N(\cO)=\cS_N^\new(\cO)$ is 2-dimensional and $F_{N,  \cO}=\Q(\sqrt{3})$.
By the Eichler mass formula \cite[Theorem 25.3.19]{Voight},  $\mass(\cO)=6$.
Thus \cref{condition:non-vanish} holds and \cref{thm:PeriodGoldfeld} shows that for a Hecke eigenform $\phi\in\cS_N(\cO)$,  we have $\cP_E(\phi)\neq0$ for a positive proportion of  imaginary quadratic fields $E\in X(D)$.
We can also apply \cref{thm:AutomGoldfeld} to obtain
    \[
    \lim_{x\to\infty}
        \frac{\#\{E\in X\mid -x< \Delta_E<0,  \ L(\tfrac12,  \pi'\otimes\eta_E)\neq0\}}
    {\#\{E\in X\mid -x<\Delta_E<0\}} \
    \geq \ \frac{15}{96}.
    \]
Here,  $\pi'$ is the Jacquet-Langlands transfer to $\PGL_2(\A)$ of the  representation of $G_\A$ generated by $\phi$.
Hence,  more than $\frac{15}{96}=15.625\%$ of imaginary quadratic twists of $\pi'$ have non-vanishing central $L$-values.

\begin{remark}
We obtain a lower bound toward Godlfeld's conjecture  for elliptic curves (\cref{conj:Goldfeld}) if $F_{N,  \cO}=\Q$ (equivalently,  $t_\cO=2$) and $3\mid\mass(\cO)$.
According to Kirschmer's data base \cite{Kirschmer},  there are 29 such quaternion orders.
One can check that all of the corresponding elliptic curves have 3-isogenies and hence the weak Godldfeld conjecture (\cref{conj:WeakGoldfeld}) is already verified in these cases (see \cite[Theorem 1.5]{KL}).

However,  our result improves the lower bounds for several elliptic curves as shown in \cref{ex:19-2}. 
Note also that \cref{thm:PeriodGoldfeld} provides new examples for which \cref{conj:AutomGoldfeld} (2) and \cref{conj:WeakPeriodGoldfeld} are valid as shown in \cref{ex:65.2.a.c}.
\end{remark}

\subsection{Proof of \cref{thm:PeriodGoldfeld} and \cref{thm:AutomGoldfeld}}
\label{subsec:ProofGoldfeld}

Write $\Typ(\cO)=\{[\cO_1],  \ldots [\cO_{t_\cO}]\}$ with $\cO_1=\cO$.
In what follows,  we need a sufficient condition for existence of optimal embeddings. 
The next lemma is a straightforward consequence of \cref{lem:local_emb} and the argument in \cref{subsec:Action}.

\begin{lemma}\label{lem:optimal-1}
A quadratic field $E\in X(D)$ has an optimal embedding with respect to some $\cO_j$ if all prime factors of $\level(\cO)$ split or ramify in $E$.
\end{lemma}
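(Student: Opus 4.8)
The plan is to reduce the existence of a global optimal embedding of $\fo_E$ into some order in the genus of $\cO$ to a purely local non-emptiness statement and then read the latter off from \cref{lem:local_emb}. Since $\{[\cO_1],\dots,[\cO_{t_\cO}]\}=\Typ(\cO)$ is precisely the set of isomorphism classes of orders locally isomorphic to $\cO$, and $\Emb(\fo_E,\cO')\neq\emptyset$ is an isomorphism invariant of $\cO'$ (transport an isomorphism $\cO'\to\cO''$ to a Skolem--Noether automorphism of $D$ and conjugate embeddings by it), the claim is equivalent to $\bigcup_{[\cO']\in\Typ(\cO)}\Emb(\fo_E,\cO')\neq\emptyset$. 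I would then invoke the adelic bookkeeping of \cref{subsec:Action}: fixing a global embedding $\iota_\infty\colon E\hookrightarrow D$ (available because $E\in X(D)$) and using it in the role of $\iota_0$ there, one identifies $\coprod_{[I]\in\Cl(\cO)}\Emb(\fo_E,\cO(I))_{/\sim}$ with $T_\Q\bs B(\cO)_\A/K$, on which $\Cl(\fo_E)$ acts freely with $\prod_p\#\bigl(\Emb(\fo_{E,p},\cO_p)_{/\sim}\bigr)$ orbits. Hence this set is nonempty exactly when $\Emb(\fo_{E,p},\cO_p)\neq\emptyset$ for every prime $p$; choosing such an $[I]$, i.e.\ one with $\Emb(\fo_E,\cO(I))\neq\emptyset$, and recalling that $\cO(I)$ is locally isomorphic to $\cO$ hence isomorphic to some $\cO_j$, produces the required optimal embedding $\fo_E\hookrightarrow\cO_j$.

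It then remains to verify the local non-emptiness at each $p$, which follows immediately from \cref{lem:local_emb}. For $p\nmid\disc(\cO)$ the local embedding number is $1$. For $p\mid\disc(D)$, the algebra $D_p$ is a division algebra, so $E_p$ must be a field and $\left(\tfrac{\Delta_E}{p}\right)\in\{0,-1\}$, giving $1-\left(\tfrac{\Delta_E}{p}\right)\ge 1$. For $p\mid\level(\cO)$, the hypothesis that $p$ splits or ramifies in $E$ means $\left(\tfrac{\Delta_E}{p}\right)\in\{0,1\}$, giving $1+\left(\tfrac{\Delta_E}{p}\right)\ge 1$. Thus every local embedding number is positive, the product over $p$ is positive, and the reduction of the previous paragraph finishes the proof.

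The one point requiring a little more care than in \cref{subsec:Action} --- and the place I expect to be the only real obstacle --- is that the identification of $\coprod_{[I]}\Emb(\fo_E,\cO(I))_{/\sim}$ with $T_\Q\bs B(\cO)_\A/K$ was set up there under the standing hypothesis $\Emb(\fo_E,\cO)\neq\emptyset$, which is essentially what we are after. To make it unconditional I would re-run that identification starting only from the fixed global $\iota_\infty$: the sets $B(\cO)_\A=\prod_pB_p(\cO_p)\times G_\R$ and the double-coset bijection $(B(\cO)_\A\cap G_\Q x_{[I]}K)/K\simeq B(\cO(I))_\Q/\overline{\cO(I)^\times}$ are defined verbatim and rest only on Skolem--Noether and strong approximation, while the adele whose $p$-component is a chosen conjugator $g_p$ with $g_p^{-1}\cdot(\iota_\infty\otimes id_{\Q_p})\in\Emb(\fo_{E,p},\cO_p)$ lies in $B(\cO)_\A$ because $g_p$ may be taken to be the identity --- hence in $K_p$ --- at the all-but-finitely-many $p$ where already $\iota_\infty(E_p)\cap\cO_p=\iota_\infty(\fo_{E,p})$. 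Alternatively one may simply cite the local--global principle for optimal embeddings into Eichler orders from \cite[\S\,30]{Voight}; either way the remaining verification is routine, in keeping with the fact that this lemma is a straightforward consequence of \cref{lem:local_emb} and the discussion of \cref{subsec:Action}.
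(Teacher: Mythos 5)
Your argument is correct and follows the same route the paper intends: the paper gives no separate proof, stating only that the lemma is a straightforward consequence of \cref{lem:local_emb} and the adelic orbit description in \cref{subsec:Action}, which is exactly the reduction you carry out (local embedding numbers are positive at $p\mid\disc(D)$ since $E_p$ is a field, and at $p\mid\level(\cO)$ by hypothesis, so the double coset set $T_\Q\bs B(\cO)_\A/K$ is nonempty and some $\cO(I)\simeq\cO_j$ receives an optimal embedding). Your extra care in re-running the identification from a fixed global embedding $\iota_\infty$, rather than from an $\iota_0\in\Emb(\fo_E,\cO)$, is a legitimate and welcome clarification of a point the paper leaves implicit.
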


\cref{lem:optimal-1} provides a sufficient condition for fixed $E\in X(D)$ to have an optimal embedding with respect to one of $\{\cO_j\}_{j=1}^{t_\cO}$.
The next lemma asserts that under a certain condition,  all but finitely many $E\in X(D)$ have an optimal embedding with respect to a fixed order $\cO$.
This is known by \cite[Theorem 10]{Michel}.  
See also \cite[Lemma 8]{SP}.

\begin{lemma}\label{lem:Duke}
Let $\{E_k\}_{k=1}^\infty$ be a sequence of imaginary quadratic fields in $X(D)$.
Assume that for each $k$,  $E_k$ has an optimal embedding with respect to $\cO_{j_k}$ for some $j_k$.
Then there exists $X>0$ such that $E_k$ has an optimal embedding with respect to $\cO$ for all $k>X$.
\end{lemma}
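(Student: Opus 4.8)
The plan is to deduce the lemma from the equidistribution of optimal embeddings (CM points) across the class set of $\cO$, together with subconvexity for quadratically twisted central $L$-values. It suffices to prove the finiteness statement that only finitely many $E\in X(D)$ admit an optimal embedding with respect to some order in $\Typ(\cO)$ but not with respect to $\cO$; this is what the argument actually delivers, and it gives the conclusion for any sequence $\{E_k\}$ with $|\Delta_{E_k}|\to\infty$ (equivalently, eventually distinct)---the relevant case, and the one the statement must be read with, since a bounded sequence of fields need not satisfy the conclusion. By \cref{lem:local_emb} and \cref{subsec:Dependence}, $E\in X(D)$ has an optimal embedding with respect to \emph{some} order in $\Typ(\cO)$ exactly when every prime dividing $\level(\cO)$ splits or ramifies in $E$, in which case the constant $c(E)$ of \cref{cor:Fourier} equals $\prod_p\#\bigl(\Emb(\fo_{E,p},\cO_p)_{/\sim}\bigr)$ and is a power of $2$ bounded independently of $E$. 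Fixing representatives $x_{[I_1]},\dots,x_{[I_{h_\cO}]}$ of $\Cl(\cO)=G_\Q\bs G_\A/K$ with $I_1=\cO$ (so $\cO(I_1)=\cO$), I would attach to each such $E$ the probability measure $\mu_E$ on the finite set $\Cl(\cO)$ with $\mu_E([I])=\#\bigl(\Emb(\fo_E,\cO(I))_{/\sim}\bigr)/Z_E$, where $Z_E=\sum_{[I]}\#\bigl(\Emb(\fo_E,\cO(I))_{/\sim}\bigr)=h_E\,c(E)$ by \cref{subsec:Action}. Since $\cO(I_1)=\cO$, we have $\Emb(\fo_E,\cO)\ne\emptyset$ if and only if $\mu_E([I_1])>0$, while the mass measure $\mu_\infty([I])=w(x_{[I]})^{-1}/\mass(\cO)$ on $\Cl(\cO)$ has full support; hence it is enough to prove $\mu_E\to\mu_\infty$ pointwise on $\Cl(\cO)$ as $|\Delta_E|\to\infty$.

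For the equidistribution I would test against a spanning set of functions on $\Cl(\cO)$: the constant function $\triv$, for which $\int\triv\,d\mu_E=1=\int\triv\,d\mu_\infty$, and a basis of $\cS(\cO)$ consisting of Hecke eigenforms. For a cuspidal Hecke eigenform $\phi$ with Jacquet--Langlands transfer $\pi'$, the identity \eqref{eq:Fourier-2} rewrites $\sum_{[I]}\#\bigl(\Emb(\fo_E,\cO(I))_{/\sim}\bigr)\phi([I])$ as $u_E\,a_\phi(|\Delta_E|)$, the $|\Delta_E|$-th Fourier coefficient of the weight $\tfrac32$ cusp form $\cW(\phi)$, so that $\bigl|\int_{\Cl(\cO)}\phi\,d\mu_E\bigr|=u_E|a_\phi(|\Delta_E|)|/(h_E c(E))\le 3|a_\phi(|\Delta_E|)|/h_E$. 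I would then bound $a_\phi(|\Delta_E|)$ via the explicit Waldspurger formula of B\"{o}cherer--Schulze-Pillot \cite{BSP1} (an explicit form of \cref{thm:Waldspurger}(2)), which relates $|a_\phi(|\Delta_E|)|^2$ to quadratically twisted central $L$-values, together with the subconvex bound of Blomer--Harcos \cite{BH} for those values; this gives $|a_\phi(|\Delta_E|)|\ll_\phi|\Delta_E|^{1/2-\delta}$ for some absolute $\delta>0$. (For $\phi\in\cS_N^\new(\cO)$ one may instead combine \cref{cor:Fourier}, \cref{thm:Waldspurger}(2) and \cref{lem:bound_period} directly.) Since $h_E\gg_\varepsilon|\Delta_E|^{1/2-\varepsilon}$ by Siegel's theorem, it follows that $\int_{\Cl(\cO)}\phi\,d\mu_E\to 0$, establishing $\mu_E\to\mu_\infty$ and hence the lemma. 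Alternatively, this equidistribution may be quoted ready-made from \cite[Theorem 10]{Michel} (see also \cite[Lemma 8]{SP}), as the statement of the lemma indicates.

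The only deep ingredient is the subconvexity bound for the quadratically twisted central value, which I would simply cite from \cite{BH}; everything else is bookkeeping---translating the hypothesis into the condition that every prime dividing $\level(\cO)$ splits or ramifies in $E_k$ via \cref{lem:local_emb} (so that the $\mu_{E_k}$ are all defined), the finite-dimensional reduction of the equidistribution to the constant function plus cusp forms, keeping the bounded factors $u_E\le 3$ and $c(E)$ under control, and the classical lower bound for imaginary quadratic class numbers.
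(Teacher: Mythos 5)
Your argument is correct and follows essentially the same route as the paper: a Duke-type equidistribution statement for the counting data of optimal embeddings across $\Cl(\cO)$, established by orthogonal decomposition into Hecke eigenforms together with Waldspurger's formula (or its B\"ocherer--Schulze-Pillot form) and Blomer--Harcos subconvexity, which forces the mass at the base class $[\cO]$ to be eventually positive. The paper phrases this as convergence of the toric periods $\cP_{\iota_k,E_k}(\phi)\to\int_{G_\Q\bs G_\A}\phi\,\d g$ for $\phi$ running through a Hecke eigenbasis of $\cA(\cO)$, rather than as weak convergence of your normalized measures $\mu_E$ on $\Cl(\cO)$, but these are equivalent repackagings of the same estimate; your explicit remark that the statement only makes sense when $|\Delta_{E_k}|\to\infty$ (automatic for distinct $E_k$) is a correct clarification left implicit in the paper.
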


\begin{proof}
We fix $\iota_k\in\Emb(\fo_{E_k},  \cO_{j_k})$ for each $k$.
For $\phi\in\cA(\cO)$,  the sequence $\{\cP_{\iota_k,  E_k}(\phi)\}_{k=1}^\infty$ converges to $\int_{G(\Q)\bs G(\A)}\phi(g)\d g$, were $\d g$ is the Tamagawa measure on $G(\A)$.
This is a variant of Duke's theorem \cite{Duke}.
Here,  for completeness we record a short proof.

Let $\{\phi_i\}_{i=1}^{h_\cO}$ be an orthogonal basis of $\cA(\cO)$ consisting of Hecke eigenforms and $\pi_j$ the irreducible automorphic representation of $G_\A$ generated by $\phi_j$.
Note that $\pi_i\not\simeq\pi_{i'}$ for $i\neq i'$.
We may and will assume $\phi_1\equiv 1$ is the constant function.
It is obvious that $\cP_{\iota_k,  E_k}(\phi_1)=2$ for any $k$.

If $\pi_i$ is 1-dimensional,  it comes from a quadratic character on $\A^\times$.
Thus we have $\cP_{\iota_k,  E_k}(\phi_i)=0$ for all but at most one $E_k$ if $\pi_i$ is 1-dimensional and $i\neq1$.

If $\pi_i$ is not 1-dimensional,  it corresponds with a cuspidal automorphic representation of $\PGL_2(\A)$.
Hence the subconvex bound \cite[Theorem 2]{BH} combined with \cref{thm:Waldspurger} (2) and \cref{lem:bound_period} shows that $\lim_{k\to\infty}|\cP_{\iota_k,  E_k}(\phi_i)|^2 \ll \lim_{k\to\infty}|\Delta_{E_k}|^{-\beta}=0$ for some $\beta>0$.
Note that we have
    \[
    \int_{G(\Q)\bs G(\A)}\phi_i(g)\d g=
        \begin{cases}
        2 & \text{if $i=1$,} \\
        0 & \text{otherwise.}
        \end{cases}
    \]
Since any $\phi\in \cA(\cO)$ is expressed as $\phi=\sum_{i=1}^{h_\cO} \frac{\langle \phi,  \phi_i \rangle}{\langle \phi_i,  \phi_i \rangle}\phi_i$,  we get the desired assertion.
Moreover,  it is obvious from the above argument that this is uniform convergence for $\phi\in\cA(\cO)$ with $\langle \phi,  \phi \rangle <1$. 

We can deduce the lemma from Duke's theorem as follows.
For each $k$,  let $i_k\,\colon\Cl(\fo_{E_k}) \rightarrow \Cl(\cO_{j_k})$ be the map induced from $\iota_k$.
We also fix $y_k=(y_{k,  v})_v\in G_{\A_f}$ such that $\cO_{j_k,  v}=y_{k,  v}\cO_vy_{k,  v}^{-1}$ and let $\widetilde{i}_k$ denote the composition of $i_k$ with $\Cl(\cO_{j_k}) \xrightarrow{\sim} \Cl(\cO)$ induced from the right translation by $y_k$.
From \eqref{eq:period} and $\cP_{\iota_k,  E_k}(\phi)=2u_{E_k}h_{E_k}^{-1}\fP_{\iota_k,  E_k}(\phi)$,  we see that for any $\varepsilon>0$,  there exists $X>0$ such that for $k>X$ and $\phi\in\cA(\cO)$ with $\langle \phi,  \phi \rangle <1$, 
    \[
    \left|\sum_{[I]\in\Cl(\cO)}\left(\frac{\#(\widetilde{i}_k^{-1}([I]))}{h_{E_k}}
    -\frac{1}{w(x_{[I]})}\right)\phi([I])\right|<\varepsilon.
    \]
For each $[I]\in\Cl(\cO)$,  as we may choose $\phi\in\cA(\cO)$ so that $\phi([J])=0$ if $[J]\neq [I]$,  it follows that $\lim_{k\to\infty} h_{E_k}^{-1}\#(\widetilde{i}_k^{-1}([I]))=w(x_{[I]})^{-1}$.
Hence $\widetilde{i}_k^{-1}([I])\neq\emptyset$ for any $[I]\in\Cl(\cO)$ and sufficiently large $k$.
In particular $\Emb(\fo_{E_k},  \cO_j)\neq\emptyset$ for any $j=1,  \ldots,  t_\cO$ and sufficiently large $k$ as we have seen in \cref{subsec:Action}.
\end{proof}

\begin{remark}
\cref{lem:Duke} holds for any (not necessarily Eichler) order $\cO$ in $D$.
\end{remark}

Combining \cref{lem:optimal-1} with \cref{lem:Duke} we obtain the following.

\begin{cor}\label{cor:optimal-2}
A quadratic field $E\in X(D)$ has an optimal embedding with respect to $\cO$ if $|\Delta_E|$ is sufficiently large and all prime factors of $\level(\cO)$ split or ramify in $E$.  

In particular,  $E\in X(D)$ has an optimal embedding with respect to any maximal order in $D$ if $|\Delta_E|$ is sufficiently large.
\end{cor}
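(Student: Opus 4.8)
The plan is to deduce the statement from \cref{lem:optimal-1} and \cref{lem:Duke} by a contradiction argument. Suppose the first assertion fails. Then there is an infinite set $\cE$ of quadratic fields $E\in X(D)$ such that every prime factor of $\level(\cO)$ splits or ramifies in $E$, yet $E$ has no optimal embedding with respect to $\cO$. Since there are only finitely many quadratic fields of bounded discriminant, we may enumerate $\cE$ as a sequence $\{E_k\}_{k=1}^\infty$ with $|\Delta_{E_k}|\to\infty$.

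By \cref{lem:optimal-1}, each $E_k$ admits an optimal embedding with respect to $\cO_{j_k}$ for some index $j_k\in\{1,\dots,t_\cO\}$. This is precisely the hypothesis of \cref{lem:Duke}, which then yields $X>0$ such that $E_k$ admits an optimal embedding with respect to $\cO$ for all $k>X$, contradicting the defining property of $\cE$. Hence the set of $E\in X(D)$ in which all prime factors of $\level(\cO)$ split or ramify but which have no optimal embedding with respect to $\cO$ is finite, and the first assertion follows by taking $|\Delta_E|$ larger than the absolute discriminants of all these exceptional fields. The ``in particular'' clause is then immediate: when $\cO$ is a maximal order, $\level(\cO)=1$, so the condition that all prime factors of $\level(\cO)$ split or ramify in $E$ is vacuous for every $E\in X(D)$.

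I do not expect a genuine obstacle here, since the corollary is a formal consequence of the two preceding lemmas: all of the analytic input (the variant of Duke's theorem \cite{Duke}, the subconvex bound of \cite{BH}, and \cref{thm:Waldspurger}\,(2)) has already been absorbed into \cref{lem:Duke}. The only point needing a word of care is the passage from ``infinitely many exceptions'' to ``a sequence with $|\Delta_{E_k}|\to\infty$'', which is clear because each bound on $|\Delta_E|$ admits only finitely many quadratic fields.
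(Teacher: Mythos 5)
Your proof is correct and follows the same route the paper intends: the corollary is stated with only the terse justification ``Combining \cref{lem:optimal-1} with \cref{lem:Duke} we obtain the following,'' and your contradiction argument is exactly the right way to unpack that. You are also right to make explicit the step that an infinite family of exceptional fields can be arranged with $|\Delta_{E_k}|\to\infty$, since that hypothesis is used in the proof of \cref{lem:Duke} even though it is not stated there.
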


We need the following form of an existence result of optimal embeddings to prove \cref{thm:PeriodGoldfeld}.

\begin{cor}\label{cor:optimal-3}
We take  $\{\cE_v\}_{v\in S_\cO}\in\prod_{v\in S_\cO}X(D_v)$ so that the ramification set of $D$ coincides with $\{v\in S_\cO \mid \varepsilon(\pi'_v; \cE_v)=-1\}$.
Then,  all but finitely many $E\in X(\{\cE_v\}_{v\in S_\cO})$ has an optimal embedding with respect to $\cO$.
\end{cor}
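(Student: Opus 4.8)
The plan is to deduce this from \cref{cor:optimal-2}, which says that $E\in X(D)$ acquires an optimal embedding with respect to $\cO$ as soon as $|\Delta_E|$ is large and every prime factor of $\level(\cO)$ splits or ramifies in $E$. So I need two things of an arbitrary $E\in X(\{\cE_v\}_{v\in S_\cO})$: first that $E\in X(D)$, and second that every $p\mid\level(\cO)$ splits or ramifies in $E$. Granting both, \cref{cor:optimal-2} finishes the proof, since there are only finitely many imaginary quadratic fields with $|\Delta_E|$ below a given bound.

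The first point is a routine local--global statement. For $v\in S_\cO$ we have $E_v\simeq\cE_v\in X(D_v)$, so $E_v$ embeds in $D_v$; for $v\notin S_\cO$ the algebra $D_v\simeq\M_2(\Q_v)$ is split, so $E_v$ embeds in $D_v$ automatically. By the local--global criterion for embedding a quadratic field into a quaternion algebra (no ramified place of $D$ splits in $E$), $E\hookrightarrow D$. Since $\infty\in S_\cO$, the choice $\cE_\infty\in X(D_\infty)$ forces $\cE_\infty=\C$, so every such $E$ is imaginary quadratic, as it must be for $D$ definite.

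The second point is where the hypothesis on $\{\cE_v\}_{v\in S_\cO}$ enters, and this is the step I expect to require the most care. Fix $p\mid\level(\cO)$. As $\level(\cO)$ is coprime to $\disc(D)$, the prime $p$ is not in the ramification set of $D$, so by the defining property of $\{\cE_v\}_{v\in S_\cO}$ we have $\varepsilon(\pi'_p;\cE_p)=1$. On the other hand $D_p$ is split, so $\pi'_p=\pi_p$ is the local component at $p$ of the representation generated by the newform $\phi\in\cS_N^\new(\cO)$; since $\cO_p$ is an Eichler order of level exactly $p$ and $\phi$ is $N$-invariant, $\pi_p$ has conductor $p$ and trivial central character, hence is the Steinberg representation of $\PGL_2(\Q_p)$ up to an unramified quadratic twist. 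If $p$ were inert in $E$, then $E_p/\Q_p$ is the unramified quadratic extension; the unramified twisting character dies after composition with $\Nm_{E_p/\Q_p}$, so the base change of $\pi_p$ to $\PGL_2(E_p)$ is again Steinberg, with root number $-1$, giving $\varepsilon(\pi'_p;\cE_p)=-1$ --- a contradiction. Hence $p$ splits or ramifies in $E$, and \cref{cor:optimal-2} applies. The only real content beyond bookkeeping is this local computation identifying $\pi_p$ and the root number of its base change to the unramified quadratic extension.
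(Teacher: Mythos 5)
Your proposal is correct and follows essentially the same route as the paper: reduce via \cref{cor:optimal-2} to showing $\cE_p$ is split or ramified for each $p\mid\level(\cO)$, identify $\pi'_p$ as Steinberg up to an unramified quadratic twist (since $\phi\in\cS_N^\new(\cO)$), and use the hypothesis $\varepsilon(\pi'_p;\cE_p)=+1$ to exclude the inert case. The only cosmetic differences are that you rule out the unramified $\cE_p$ by a direct base-change root-number computation instead of tabulating $\varepsilon$ for all $\cE'_p$ as the paper does, and you spell out the (implicit in the paper) verification that $X(\{\cE_v\}_{v\in S_\cO})\subseteq X(D)$.
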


\begin{proof}
From \cref{cor:optimal-2},  it suffices to show that for a prime factor $p$ of $\level(\cO)$,  $\cE_p\simeq\Q_p\times\Q_p$ or $\cE_p$ is a ramified extension of $\Q_p$. 
Let $\St_p$ denote the Steinberg representation of $G_p\simeq\PGL_2(\Q_p)$ and $\omega_p$ be the composition of the non-trivial unramified quadratic character of $\Q_p^\times$ with the reduced norm on $D_p^\times$.
We regard $\omega_p$ as a character of $G_p$.
Then $\pi'_p\simeq\St_p$ or $\St_p\otimes\omega_p$.
We see that for $\cE_p'\in X_p$,  $\varepsilon(\St_p; \cE_p')=1$ if and only if $\cE_p'\simeq\Q_p\times\Q_p$
and $\varepsilon(\St_p\otimes\omega_p; \cE_p')=-1$ if and only if $\cE_p'$ is the unramified quadratic extnesion of $\Q_p$.
By the choice of $\{\cE_v\}_{v\in S_\cO}$,  we have $\varepsilon(\pi'_p; \cE_p)=1$ for any prime $p \mid \level(\cO)$.
Therefore we see that $\cE_p\simeq\Q_p\times\Q_p$ or $\cE_p$ is a ramified extension of $\Q_p$.
\end{proof}

Hartung \cite{Hartung} showed that there are infinitely many imaginary quadratic fields $E$ with $3\nmid h_E$.
Hartung's result was extended by many researchers with various congruence conditions on the discriminants,  giving explicit lower bounds for the proportion of such quadratic fields.
For more details,  see \cite[\S\,9]{KL} and the references thereof.
We need a variant of those results to obtain an explicit lower bounds for the proportion of $E\in X(D)$ with $\Emb(\fo_E,  \cO')\neq\emptyset$ for some $[\cO']\in\Typ(\cO)$ and $3 \nmid h_E$.
First,  we recall the notion of  a valid pair from \cite{KL}.

\begin{definition}
A pair $(m,  M)$ of positive integers is \textit{valid} if it satisfies the following properties:
\begin{itemize}
\item for an odd prime factor $\ell$ of $\gcd(m,  M)$,   $\ell^2\mid M$ and $\ell^2\nmid m$;
\item if $M$ is even,  then one of the following holds:
\begin{itemize}
\item[(i)] $4\mid M$ and $m\equiv 1 \pmod 4$,
\item[(ii)] $16\mid M$ and $m\equiv 8 \text{ or } 12 \pmod {16}$.
\end{itemize}
\end{itemize}
\end{definition}

The following is \cite[Proposition 9.3]{KL},  which is attributed to \cite{Taya} there.

\begin{prop}\label{prop:density-1}
For a valid pair $(m,  M)$,  we have
    \[
    \lim_{x\to\infty}
    \frac{\#\{E\in X \mid -x<\Delta_E<0,  \ \Delta_E \equiv m \pmod M ,  \ 
    3\nmid h_E\}}
    {\#\{E\in X \mid -x<\Delta_E<0\}}
    \geq \frac{1}{2\Phi(M)}\prod_{p\mid M}\frac{q(p)}{p+1}.
    \]
Here,  $\Phi$ is the Euler totient function,  $p$ runs over prime factors of $M$ and $q(p)=
    \begin{cases}
    4 & \text{if $p=2$}\\
    \ell & \text{otherwise}.
    \end{cases}$
\end{prop}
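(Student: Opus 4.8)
The plan is to deduce this from the Davenport--Heilbronn theorem on the average size of the $3$-torsion of class groups of imaginary quadratic fields, in the refined form that permits congruence conditions on the discriminant; this is in fact how \cite[Proposition 9.3]{KL} is obtained, following \cite{Taya}. First I would record the elementary observation that $\Cl(E)[3]$ is an $\F_3$-vector space, so that $\#\Cl(E)[3]=1$ when $3\nmid h_E$ and $\#\Cl(E)[3]\geq 3$ otherwise. This yields the pointwise bound
\[
\mathbbm{1}[3\nmid h_E]\;\geq\;\frac{3-\#\Cl(E)[3]}{2}
\]
valid for every imaginary quadratic field $E$.

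Next I would sum this inequality over $E\in X$ with $-x<\Delta_E<0$ and $\Delta_E\equiv m\pmod M$, and divide by $\#\{E\in X\mid -x<\Delta_E<0\}$. The left-hand side becomes the quantity whose limit we must bound below; the right-hand side involves two asymptotics: the density among negative fundamental discriminants of those congruent to $m$ modulo $M$, and the mean value of $\#\Cl(E)[3]$ over that congruence subfamily. The first is a routine elementary sieve, the ``validity'' hypothesis on $(m,M)$ being exactly what makes the congruence class $m\bmod M$ compatible with being a fundamental discriminant at each prime $\ell\mid M$. For the second I would invoke the congruence-twisted Davenport--Heilbronn asymptotic: the mean of $\#\Cl(E)[3]$ over $\Delta_E\equiv m\pmod M$ equals an explicit product of local densities over the primes dividing $M$. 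Substituting both into the displayed inequality and simplifying should reproduce the (positive) lower bound $\tfrac{1}{2\Phi(M)}\prod_{p\mid M}\tfrac{q(p)}{p+1}$.

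The hard part is the second asymptotic: the Davenport--Heilbronn mean-value theorem with congruence conditions and an explicit constant. This rests on the correspondence between nontrivial $3$-torsion classes and unramified cyclic cubic extensions, hence on counting cubic fields (equivalently $\GL_2(\Z)$-orbits of integral binary cubic forms) of bounded discriminant, together with a careful local analysis at the primes dividing $M$ to identify the constant. Since this is precisely \cite{Taya}, as recorded in \cite[Proposition 9.3]{KL}, I would cite it as a black box rather than reprove it; the remaining bookkeeping --- combining it with the elementary inequality and the density count --- is then straightforward.
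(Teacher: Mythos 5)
Your proposal is correct and matches the paper's approach: the paper simply cites \cite[Proposition 9.3]{KL} (which in turn attributes the result to \cite{Taya}) without reproducing a proof, and your sketch of the underlying argument --- the pointwise inequality $\mathbbm{1}[3\nmid h_E]\ge(3-\#\Cl(E)[3])/2$ combined with the congruence-restricted Davenport--Heilbronn mean value of $\#\Cl(E)[3]$ and the density of fundamental discriminants in the progression $m\bmod M$ --- is exactly the strategy of that source. Since you ultimately defer to Taya/Kriz--Li for the congruence-twisted Davenport--Heilbronn asymptotic, your treatment is consistent with the paper's.
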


\begin{cor}\label{cor:density-2}
For $x>0$,  let $N(x,  \cO)$ denote the set of $E\in X(D)$ with the property that  $|\Delta_E|<x$,  $3\nmid h_E$ and all prime factors of $\level(\cO)$ split or ramify in $E$.
Then
    \[
    \lim_{x\to\infty}\frac{\# N(x,  \cO)}{\#\{E\in X \mid -x<\Delta_E<0\}}
    \geq \frac12\prod_{p\mid\disc(\cO)} n_p,
    \]
where $p$ runs through prime factors of $\disc(\cO)=\disc(D)\level(\cO)$, $n_p=\frac{p+2}{2(p+1)}$ if $p\neq2$ and $n_2=\frac{1}{24}$.
\end{cor}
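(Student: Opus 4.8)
The plan is to realize a large subset of $N(x,\cO)$ as a \emph{disjoint} union of congruence classes of imaginary fundamental discriminants on which \cref{prop:density-1} gives a density bound for the condition $3\nmid h_E$, and then to add the resulting lower densities. Recall that an imaginary quadratic field $E$ lies in $X(D)$ exactly when every $p\mid\disc(D)$ is inert or ramified in $E$, while the defining condition of $N(x,\cO)$ adds that every $p\mid\level(\cO)$ split or ramify in $E$; since $\disc(\cO)=\disc(D)\level(\cO)$ is square-free these two sets of primes are disjoint. The splitting type of an odd prime $p$ in $E$ is read off from $\Delta_E\bmod p^{2}$ ($p$ is inert iff $\Delta_E$ reduces to a non-residue mod $p$, ramified iff $\Delta_E\equiv pu\pmod{p^{2}}$ with $u\in(\Z/p\Z)^{\times}$, split otherwise), and the type of $2$ is read off from $\Delta_E\bmod 16$. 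So, apart from $3\nmid h_E$, every condition cutting out $N(x,\cO)$ is a congruence on $\Delta_E$.

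First I would take $M=\prod_{p\mid\disc(\cO),\,p\ \mathrm{odd}}p^{2}$, multiplied by $16$ if $2\mid\disc(\cO)$, and single out the admissible residues $m\bmod M$. For an odd $p\mid\disc(D)$ call $m$ admissible at $p$ if $m\bmod p$ is a non-residue (the ``inert'' classes, $\tfrac{p-1}{2}\cdot p$ of them mod $p^{2}$) or $m\equiv pu\pmod{p^{2}}$ with $u$ a unit (the ``ramified'' classes, $p-1$ of them), in total $\tfrac{(p-1)(p+2)}{2}$ classes; for an odd $p\mid\level(\cO)$ do the same with ``non-residue'' replaced by ``residue'', again $\tfrac{(p-1)(p+2)}{2}$ classes. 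If $2\mid\disc(\cO)$, call $m$ admissible at $2$ if $m\equiv 8$ or $12\pmod{16}$: both residues force $2$ to ramify in $E$, which is compatible both with $2\mid\disc(D)$ and with $2\mid\level(\cO)$, and they are exactly the residues permitted by case (ii) in the definition of a valid pair. The key point is that for every $m$ admissible at all primes the pair $(m,M)$ is valid: the odd prime divisors of $\gcd(m,M)$ are precisely the $p$ for which the ramified class was chosen, and there $p^{2}\mid M$ while $p^{2}\nmid m$; at $2$ one is in case (ii). By construction, every imaginary quadratic $E$ with $\Delta_E\bmod M$ admissible and $3\nmid h_E$ lies in $N(x,\cO)$.

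Next I would apply \cref{prop:density-1} to each of the $\prod_{p\mid\disc(\cO),\,p\ \mathrm{odd}}\tfrac{(p-1)(p+2)}{2}$ valid pairs $(m,M)$ obtained this way (times a factor $2$ if $2\mid\disc(\cO)$), which is legitimate because distinct residues mod $M$ give disjoint sets of discriminants, and add the bounds. Using $\Phi(p^{2})=p(p-1)$ and $q(p)=p$ for odd $p$, and $\Phi(16)=8$, $q(2)=4$, one gets $\tfrac{1}{2\Phi(M)}\prod_{p\mid M}\tfrac{q(p)}{p+1}=\tfrac12\prod_{p\mid\disc(\cO),\,p\ \mathrm{odd}}\tfrac{1}{(p-1)(p+1)}$ when $\disc(\cO)$ is odd and $\tfrac1{12}\prod_{p\mid\disc(\cO),\,p\ \mathrm{odd}}\tfrac{1}{(p-1)(p+1)}$ when $2\mid\disc(\cO)$. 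Multiplying by the number of admissible classes, the terms telescope to $\tfrac12\prod_{p\mid\disc(\cO)}n_p$ in the odd case and to $\tfrac16\prod_{p\mid\disc(\cO),\,p\ \mathrm{odd}}n_p$ in the even case, and the latter is $\ge\tfrac1{48}\prod_{p\mid\disc(\cO),\,p\ \mathrm{odd}}n_p=\tfrac12 n_2\prod_{p\mid\disc(\cO),\,p\ \mathrm{odd}}n_p=\tfrac12\prod_{p\mid\disc(\cO)}n_p$. In both cases the lower density is at least $\tfrac12\prod_{p\mid\disc(\cO)}n_p$, which is the claim (taking $\liminf$ through the finite sum).

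I expect the only delicate point to be the prime $2$: one must check that, to stay inside the ``valid pair'' hypothesis of \cref{prop:density-1} at $2$, one is essentially forced into case (ii); that the residues $8,12\bmod 16$ make $2$ ramify in $E$; and that this ramification is consistent with whichever of $\disc(D)$, $\level(\cO)$ is even. Accepting a weaker constant than $\tfrac12\prod n_p$ in the even case is harmless here, since the value $n_2=\tfrac1{24}$ in the statement was chosen precisely to absorb this loss. The odd primes are routine, and the only remaining bookkeeping is to observe that imposing the local conditions simultaneously at all $p\mid\disc(\cO)$ still produces valid pairs, which works because $M$ is a product of the $p^{2}$ (and $16$), so validity is checked prime by prime.
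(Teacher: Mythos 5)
Your proposal is correct and takes essentially the same route as the paper's proof: build $M$ from $\disc(\cO)^2$ (with the extra power of $2$ when $\disc(\cO)$ is even), enumerate the residues $m$ that simultaneously force the required local behaviour at the primes dividing $\disc(\cO)$ and make $(m,M)$ a valid pair, apply \cref{prop:density-1} to each class and sum over the disjoint classes. Your explicit bookkeeping at $2$ (using $m\equiv 8,12\bmod 16$) even gives a slightly better constant in the even case than the stated $n_2=\tfrac{1}{24}$, which the paper's more conservative count is designed to absorb.
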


\begin{proof}
Set $M:=\disc(\cO)^2k(\cO)$,  where $k(\cO):=4$ if $\disc(\cO)$ is even and $1$ otherwise.
By the Chinese remainder theorem,  the number of positive integers $m<M$ which satisfy the following property is not less than $k(\cO)^{-1}\prod_{\substack{p\mid M \\ p>2}}\frac{(p+2)(p-1)}{2}$,  where $p$ runs over odd prime factors of $\disc(\cO)$:
\begin{itemize}
\item $(m,  M)$ forms a valid pair; 
\item for $E\in X$ with $\Delta_E \equiv m \pmod M$,  prime factors of $\disc(D)$ do not split in $E$ and prime factors of $\level(\cO)$ split or ramify in $E$.
\end{itemize}
For such a pair $(m,  M)$,  one sees that the set $\{E\in X \mid -x<\Delta_E<0,  \ \Delta_E \equiv m \pmod M ,  \ 3\nmid h_E\}$ is contained in $N(x,  \cO)$.
Thus the assertion follows from \cref{prop:density-1}.
\end{proof}

Now we are ready to prove \cref{thm:PeriodGoldfeld} and \cref{thm:AutomGoldfeld}.

\begin{proof}[Proof of \cref{thm:PeriodGoldfeld}]
From \cref{cor:optimal-3},  all but finitely many $E\in X(\{\cE_v\}_{v\in S_\cO})$ have an optimal embedding with respect to $\cO$.
Thus \cref{prop:non-vanish} and \cref{cor:density-2} imply
    \[
    \lim_{x\to\infty}
    \frac{\#\{E\in X(\{\cE_v\}_{v\in S_\cO}) \mid |\Delta_E|<x, \  
    \cP_E(\phi)\neq0\}}
    {\#\{E\in X(\{\cE_v\}_{v\in S_\cO}) \mid |\Delta_E|<x,  \ 3\nmid h_E\}} \geq 1.
    \]
and $\#\{E\in X(\{\cE_v\}_{v\in S_\cO}) \mid |\Delta_E|<x,  \ 3\nmid h_E\} \gg x$ when $x\to\infty$.
This completes the proof.
\end{proof}

\begin{proof}[Proof of \cref{thm:AutomGoldfeld}]
Suppose that $E\in X(D)$ has an optimal embedding with respect to $\cO_j$ and $3\nmid h_E$.
We fix $\iota_j\in\Emb(\fo_E,  \cO_j)$.
Let $\phi\in\cS_N(\cO)$ be the Hecke eigenform which generates $\pi$.
Recall that we write $\Typ(\cO)=\{[\cO_1],  \ldots [\cO_{t_\cO}]\}$.
Take $y=(y_v)_v\in G_{\A_f}$ so that $\cO_{j,  v}=y_v\cO_vy_v^{-1}$ for any finite place $v$.
Set $\phi':=R_G(y)\phi$,  where $R_G$ is the right translation.
Then $\phi'$ is in $\in\cS_N(\cO_j)$ and it belongs to the space of $\pi$. 
It follows from \cref{prop:non-vanish} that $\cP_{\iota_j,  E}(\phi')\neq0$.
Thus \cref{thm:Waldspurger} (2) implies $L(\tfrac12,  \pi'\otimes\eta_E)\neq0$.
By \cref{lem:optimal-1},  this means that $\{E\in X \mid -x <\Delta_E<0,  \ L(\tfrac12,  \pi\otimes\eta_E)\neq0\}$ contains the set $N(x,  \cO)$ in \cref{cor:density-2}.
The theorem follows from \cref{cor:density-2}.
\end{proof}


\section{Sign changes}
\label{sec:Sign changes}

Let $\pi$ be an irreducible cuspidal automorphic representation of $G_\A$ generated by a normalized Hecke eigenform $\phi\in\cS_N^\new(\cO)$.
Throughout this section,  we assume that $L(\tfrac12,  \pi)\neq0$ and that $\cO$ is a maximal order \textit{i.e.} $\level(\cO)=1$.
Then $\fP_E(\phi)$ is well-defined for any $E\in X(D)$.
Since $\phi$ is normalized,  it takes values in the integer ring $\fo_\pi$ of its Hecke field $F_\pi$.
Note that $F_\pi$ is a totally real number field.
We fix an embedding $\iota_\pi\,\colon F_\pi \hookrightarrow \R$ and often regard $\phi$ as a real valued function by composing with $\iota_\pi$. 
We also fix a $\Z$-basis $\{v_i\}_i$ of $\fo_\pi$.
For $x\in F_\pi$,  write its expansion as $x=\sum_{i=1}^{[F_\pi : \Q]}x^{(i)} v_i$,  where $x^{(i)}\in\Q$.
Let $\phi^{(i)}$ be the function $[I]\mapsto\phi([I])^{(i)}$ on $\Cl(\cO)$.
Note that $\{\phi^{(i)} \mid i=1,  \ldots,  [F_\pi : \Q]\}$ is a basis of $\mathrm{Span}_\R\{\iota\circ\phi \mid \iota\in\Hom(F_\pi,  \R)\}$.
In particular,  each $\phi^{(i)}$ is an element of $\cS_N^\new(\cO)$.
The following is the main result of this section.

\begin{theorem}\label{thm:sign_change}
Keep the above notation.
\begin{itemize}
\item[(1)] The set $\{E\in X(D) \mid \fP_E(\phi)\neq0\}$ is infinite.
\item[(2)] The sequence $\{\fP_E(\phi)\in\R\}_{E\in X(D)}$ has infinitely many sign changes.
\item[(3)] The sequence $\{\fP_E(\phi)^{(i)}\in\Z\}_{E\in X(D)}$ has infinitely many sign changes for at least one $i$,  where $1 \leq i \leq [F_\pi : \Q]$.
\end{itemize}
\end{theorem}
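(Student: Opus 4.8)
The plan is to derive all three assertions from the explicit formula of \cref{prop:Hecke}, played off against two inputs. The first is that for any cusp form $h$ of weight $\tfrac32$ the completed Hecke $L$-series $D(s,h)$ (the Mellin transform of $h$, equal to $(2\pi)^{-s}\Gamma(s)\sum_n a(n)n^{-s}$) is entire — a standard fact for half-integral weight cusp forms, the Hecke bound $a(n)\ll n^{3/4}$ giving absolute convergence in a right half-plane to begin the continuation. The second is the mean value formula of \cite{SWY}, which for every nonzero $\psi\in\cS_N^\new(\cO)$ gives $\sum_{|\Delta_E|<x}|\fP_E(\psi)|^2\gg x^{3/2}$. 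Assertion (1) is then immediate, since this bound (applied to $\psi=\phi$) forces infinitely many nonzero $\fP_E(\phi)$; it also follows a posteriori from (2).

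For (2), suppose $\{\fP_E(\phi)\in\R\}_E$ changes sign only finitely often and, replacing $\phi$ by $-\phi$, assume $\fP_E(\phi)\ge0$ for $|\Delta_E|>X_0$. By \cref{prop:Hecke},
\[
G(s):=\sum_{E\in X(D)}\frac{c(E)\,\fP_E(\phi)}{L^{S_\cO}(2s,\eta_E)\,|\Delta_E|^{s}}=\frac{D(s,\cW(\phi))}{(2\pi)^{-s}\,\Gamma(s)\,L_\fin(2s-\tfrac12,\pi)},
\]
and the right-hand side is holomorphic on a neighbourhood of the real point $s=\tfrac34$: $D(s,\cW(\phi))$ is entire, $\Gamma(s)$ is holomorphic and zero-free for $\re s>0$, and $L_\fin(2s-\tfrac12,\pi)$ is holomorphic everywhere and zero-free for $\re s\ge\tfrac34$ by the non-vanishing of the cuspidal automorphic $L$-function on the edge of the critical strip. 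On the other hand, for real $s>\tfrac12$ each Euler factor of $L^{S_\cO}(2s,\eta_E)^{-1}=\prod_{p\notin S_\cO}(1-\eta_E(p)p^{-2s})$ is positive and $0<L^{S_\cO}(2s,\eta_E)\le\prod_{p\notin S_\cO}(1-p^{-2s})^{-1}$, so, after discarding the finitely many $E$ with $|\Delta_E|\le X_0$, the $E$-th term of $G(s)$ is nonnegative and at least $\bigl(\prod_{p\notin S_\cO}(1-p^{-2s})^{-1}\bigr)^{-1}c(E)\fP_E(\phi)|\Delta_E|^{-s}$. Since $c(E)$ is bounded ($\disc(\cO)$ being fixed), combining the mean value formula with the Hecke bound $|\fP_E(\phi)|=c(E)^{-1}|a_\phi(|\Delta_E|)|\ll|\Delta_E|^{3/4}$ yields $\sum_{|\Delta_E|<x}\fP_E(\phi)\gg x^{3/4}$, whence a partial summation gives $\sum_E c(E)\fP_E(\phi)|\Delta_E|^{-3/4}=\infty$ and therefore $G(s)\to+\infty$ as $s\downarrow\tfrac34$ along the reals. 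This contradicts the local boundedness of a function holomorphic at $s=\tfrac34$; hence $\{\fP_E(\phi)\}_E$ has infinitely many sign changes.

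For (3) I would repeat this coordinate by coordinate. Since $\cS_N^\new(\cO)$ is defined over $\Q$, each Galois conjugate $\phi^\sigma$ ($\sigma\in\Hom(F_\pi,\R)$) is again a normalized Hecke eigenform in $\cS_N^\new(\cO)$, and the function $\phi^{(i)}\in\cS_N^\new(\cO)$, $[I]\mapsto\phi([I])^{(i)}$, satisfies $\fP_E(\phi^{(i)})=\fP_E(\phi)^{(i)}$ and, by \cref{cor:Fourier} and linearity of $\cW$, has classical Waldspurger lift $\cW(\phi^{(i)})=\sum_\sigma c_{i,\sigma}\cW(\phi^\sigma)$ — a cusp form of weight $\tfrac32$ — with $|\Delta_E|$-th Fourier coefficient $c(E)\fP_E(\phi)^{(i)}$; here $\phi^{(i)}=\sum_\sigma c_{i,\sigma}\phi^\sigma$ expresses the coordinate functions through the Galois conjugates. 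Suppose, for a contradiction, that $\{\fP_E(\phi)^{(i)}\}_E$ changes sign only finitely often for every $i$; after replacing each $v_i$ by $\pm v_i$ assume $\fP_E(\phi)^{(i)}\ge0$ for $|\Delta_E|>X_0$, all $i$. Applying the mean value formula of \cite{SWY} to each eigenform $\phi^\sigma$ and summing, the positive-definiteness of the trace form of the totally real field $F_\pi$ gives
\[
\sum_{|\Delta_E|<x}\sum_i|\fP_E(\phi)^{(i)}|^{2}\ \asymp\ \sum_{|\Delta_E|<x}\Tr_{F_\pi/\Q}\!\bigl(\fP_E(\phi)^{2}\bigr)\ \gg\ x^{3/2},
\]
so by the pigeonhole principle some fixed index $i_0$ satisfies $\sum_{|\Delta_E|<x_n}|\fP_E(\phi)^{(i_0)}|^{2}\gg x_n^{3/2}$ along a sequence $x_n\to\infty$ which, after thinning, we may take to satisfy $x_{n+1}\ge 2x_n$. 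With the Hecke bound for $\cW(\phi^{(i_0)})$ and the boundedness of $c(E)$ this forces $\sum_{|\Delta_E|<x_n}c(E)\fP_E(\phi)^{(i_0)}\gg x_n^{3/4}$, and a partial summation along the $x_n$ (using $x_{n+1}\ge 2x_n$) gives $\sum_E c(E)\fP_E(\phi)^{(i_0)}|\Delta_E|^{-3/4}=\infty$; so the series $G_{i_0}(s):=\sum_E c(E)\fP_E(\phi)^{(i_0)}\bigl(L^{S_\cO}(2s,\eta_E)|\Delta_E|^{s}\bigr)^{-1}$ has eventually nonnegative terms and $G_{i_0}(s)\to+\infty$ as $s\downarrow\tfrac34$. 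But, expanding $D(s,\cW(\phi^{(i_0)}))=\sum_\sigma c_{i_0,\sigma}D(s,\cW(\phi^\sigma))$ and applying \cref{prop:Hecke} to each eigenform $\phi^\sigma$,
\[
G_{i_0}(s)=\sum_\sigma c_{i_0,\sigma}\,\frac{D(s,\cW(\phi^\sigma))}{(2\pi)^{-s}\,\Gamma(s)\,L_\fin(2s-\tfrac12,\pi^\sigma)},
\]
which is holomorphic near $s=\tfrac34$ for the reasons given in (2) — a contradiction. Hence $\{\fP_E(\phi)^{(i_0)}\}_E$ has infinitely many sign changes for at least one $i_0$.

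The step I expect to be delicate is the analytic one. The series defining $G(s)$ (and $G_{i_0}(s)$) is not an ordinary Dirichlet series, because of the per-term factor $L^{S_\cO}(2s,\eta_E)^{-1}$, so some care is needed to conclude it genuinely blows up at $s=\tfrac34$; this is what the uniform comparison $\bigl(\prod_{p\notin S_\cO}(1-p^{-2s})^{-1}\bigr)^{-1}\le L^{S_\cO}(2s,\eta_E)^{-1}$ for real $s>\tfrac12$ is for, reducing the matter to the divergence at $s=\tfrac34$ of the honest Dirichlet series $\sum_E c(E)\fP_E(\phi)|\Delta_E|^{-s}$, itself a consequence of the $\tfrac32$-power mean value and the $\tfrac34$-power Hecke bound. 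One also needs the precise shape of \cref{prop:Hecke} — so that dividing $D(s,\cW(\phi))$ by $(2\pi)^{-s}\Gamma(s)L_\fin(2s-\tfrac12,\pi)$ lands exactly on $G(s)$ — together with the edge non-vanishing of $L_\fin(\,\cdot\,,\pi)$; and, for (3), the extra bookkeeping of selecting the coordinate $i_0$ through the trace form and of controlling the $L$-series of the non-eigenform $\cW(\phi^{(i_0)})$ via its decomposition into the Waldspurger lifts of the eigenforms $\phi^\sigma$.
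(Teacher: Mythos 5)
Your overall strategy matches the paper's: assume finitely many sign changes, plug the nonnegative tail into the explicit formula of \cref{prop:Hecke}, and derive a contradiction from the holomorphy of $D(s,\cW(\phi))$; and for (1), using the second-moment lower bound from \cite{SWY} instead of the paper's citation of \cite{SW} is a perfectly good alternative. However, there is a genuine gap in the analytic step of (2), which then propagates to (3).

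The issue is the inference ``$G(s)\to+\infty$ as $s\downarrow\tfrac34$.'' Your bound $L^{S_\cO}(2s,\eta_E)^{-1}\geq\prod_{p\notin S_\cO}(1-p^{-2s})$ is a pointwise inequality, valid term by term for real $s$ in the region where the series defining $G(s)$ is actually convergent. But that series is only known to converge (and hence to represent $G$) for $\re(s)>\tfrac74$, by the Hecke bound; for $\tfrac34<\re(s)\le\tfrac74$, $G$ is given only by analytic continuation, and you have no right to compare $G(s)$ with partial sums of its defining series there. So even after you establish $\sum_E c(E)\fP_E(\phi)|\Delta_E|^{-3/4}=\infty$, you cannot conclude from the term-by-term bound that $G(s)$ blows up as $s\downarrow\tfrac34$. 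To extract a contradiction from the divergence of a series with (eventually) nonnegative terms against the holomorphy of the function it continues, you need a Landau-type statement, and Landau's theorem applies to ordinary Dirichlet series with coefficients that do not depend on $s$. Your per-term factors $L^{S_\cO}(2s,\eta_E)^{-1}$ do depend on $s$, so the series for $G(s)$ is not of this form and Landau does not directly apply. The fix — and this is exactly what the paper does — is to multiply by $\zeta^{S_\cO}(2s)$ and expand
\[
\frac{\zeta^{S_\cO}(2s)}{L^{S_\cO}(2s,\eta_E)}=\sum_{(d,\disc D)=1}\frac{b_E(d)}{d^{2s}},\qquad b_E(d)=\prod_{p\mid d}(1-\eta_E(p))\ge 0,
\]
so that the $E$-sum (restricted to the positive tail) becomes a genuine Dirichlet series in the integer $d^2|\Delta_E|$ with nonnegative coefficients. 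Then Landau's theorem says its real abscissa of convergence is a singularity, the growth estimate $\gg x^{3/4-\re(s)}$ of the partial sums forces that abscissa to be $\ge\tfrac34$, and this contradicts the holomorphy of $(2\pi)^s\zeta^{S_\cO}(2s)\Gamma(s)^{-1}L_\fin(2s-\tfrac12,\pi)^{-1}D_+(s,\cW(\phi))$ on $\re(s)\ge\tfrac34$ (using the entirety of $D(s,\cW(\phi))$, that $D_-$ is a finite sum, and the non-vanishing of $L_\fin(1,\pi)$). Your argument for (3) inherits the same gap; the selection of $i_0$ via the trace form and the decomposition of $\cW(\phi^{(i)})$ into Waldspurger lifts of the Galois conjugates is fine, but the final blow-up claim again requires the $\zeta^{S_\cO}$ insertion and Landau, not just the pointwise comparison.
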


\subsection{Hecke $L$-series}
\label{subsec:Hecke $L$-series}

Let $N$ and $k$ be positive integers. 
For a Dirichlet character $\chi$ on $\Z/4N\Z$,  let $S_{k+\frac12}(N,  \chi)$ be the space of weight $k+\frac12$ cusp forms on $\Gamma_0(4N)$ with Nebentypus $\chi$.
The Fourier expansion of $h\in S_{k+\frac12}(N,  \chi)$ has the form $h(z)=\sum_{n=1}^\infty a(n)e^{2\pi\sqrt{-1}nz}$.
We define the Hecke $L$-series $D(s,  h)$ of $h$ as the Mellin transform
    \[
    D(s,  h):=\int_0^\infty h(\sqrt{-1}y)y^{s-1}\d y
    =(2\pi)^{-s}\Gamma(s)\sum_{n=1}^\infty \frac{a(n)}{n^s}.
    \]
It follows from the Hecke bound $a(n)=O(n^{\frac{2k+1}{4}})$ \cite[(1.13)]{Shimura} that the last expression converges absolutely when $\re(s)>\frac{2k+5}{4}$.
The Fricke involution $h \mapsto h\,|[\tau_N]_{k+\frac12}$ is defined by
    \[
    (h\,|[\tau_N]_{k+\frac12})(z)=
    N^{-\frac{2k+1}{4}}(-\sqrt{-1}z)^{-\frac{2k+1}{2}}h\left(\frac{-1}{Nz}\right).
    \]
According to \cite[Proposition 1.4]{Shimura},  $h\,|[\tau_N]_{k+\frac12}$ belongs to $S_{k+\frac12}(N,  \bar{\chi}(\frac{N}{\cdot}))$,  where $(\frac{\cdot}{\cdot})$ is the Kronecker symbol and $(h\,|[\tau_N]_{k+\frac12})\,|[\tau_N]_{k+\frac12}=h$.
From
    \[
    D(s,  h)=\int_1^\infty h(\sqrt{-1}y)y^{s-1}\d y+N^{-s+\frac{2k+1}{4}}
    \int_1^\infty (h\,|[\tau_N]_{k+\frac12})(\sqrt{-1}y)y^{\frac{2k-1}{2}-s}\d y,
    \]
it follows that $D(s,  h)$ has holomorphic continuation to whole complex plane and satisfies the functional equation
    \[
    D(s,  h)=N^{-s+\frac{2k+1}{4}}D(k+\tfrac12-s,  h\,|[\tau_N]_{k+\frac12}).
    \] 

We let $h=\cW(\phi)\in S_{3/2}^+(\disc(\cO))$.
Recall that $a_\phi(n)$ is the $n$-th Fourier coefficient of $\cW(\phi)$.
From \eqref{eq:Fourier},  we get
\begin{equation}\label{eq:Fourier-3}
    a_\phi(n)=\sum_{\fo(-n)\subset\fo}u_\fo^{-1}
    \sum_{[I]\in\Cl(\cO)}\#\Big(\Emb(\fo,  \cO(I))_{/\sim}\Big)\phi([I]).
\end{equation}
In particular,  $a_\phi(n)=0$ unless $\Q(\sqrt{-n})\in X(D)$.
Hence we can rearrange the defining sum of $(2\pi)^s\Gamma(s)^{-1}D(s,  h)$ for $\re(s)>\frac74$ as 
\begin{equation}\label{eq:Hecke}
    (2\pi)^s\Gamma(s)^{-1}D(s,  h)=\sum_{n=1}^\infty \frac{a_\phi(n)}{n^s}
    =\sum_{E\in X(D)}|\Delta_E|^{-s}
    \sum_{m=1}^\infty \frac{a_\phi(m^2|\Delta_E|)}{m^{2s}}.
\end{equation}

The next lemma is an analogue of \cite[(3.2)]{BK}.
\begin{lemma}\label{lem:Hecke}
Let $L_\fin(s,  \pi)$ be the finite part of the standard $L$-function of $\pi$ and $L^{S_\cO}(s,  \eta_E)$ a partial Dirichlet $L$-function.
Then we have
    \[
    \sum_{m=1}^\infty \frac{a_\phi(m^2|\Delta_E|)}{m^s}
    =a_\phi(|\Delta_E|)
    \frac{L_\fin(s-\tfrac12,  \pi)}{L^{S_\cO}(s,  \eta_E)}.
    \]
\end{lemma}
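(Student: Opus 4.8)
The plan is to reduce the identity to a product of local identities, one for each prime, exploiting the Hecke-equivariance of the classical Waldspurger lift. The starting point is that $\cW(\phi)$ is a simultaneous eigenform of the Hecke operators $T_{p^2}$ on $S_{3/2}^+(\disc(\cO))$ for all $p\nmid2\disc(\cO)$, with eigenvalue equal to the $T_p$-eigenvalue $\lambda_p$ of $\phi$; this is the compatibility of the Shimura correspondence with Hecke operators (see \cite{Wal1}, \cite{Kohnen1} and \cite[\S12--13]{Gross}), and $\lambda_p$ is precisely the parameter for which $L_\fin(s-\tfrac12,\pi)$ has $p$-th Euler factor $(1-\lambda_p p^{-s}+p^{1-2s})^{-1}$ at $p\nmid\disc(\cO)$. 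First I would record the explicit action of $T_{p^2}$ in the plus space on the Fourier coefficient at $n$, namely that $a_\phi(p^2n)+\bigl(\tfrac{-n}{p}\bigr)a_\phi(n)+p\,a_\phi(n/p^2)=\lambda_p a_\phi(n)$.

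Next I would specialize to $n=|\Delta_E|m^2$. Since $\Delta_E<0$ we have $-n=\Delta_E m^2$, so $\bigl(\tfrac{-n}{p}\bigr)=\bigl(\tfrac{\Delta_E}{p}\bigr)=\eta_{E,p}(p)$ when $p\nmid m$ and $=0$ when $p\mid m$. Hence, writing $\widehat b_j=a_\phi(|\Delta_E|p^{2j}r^2)$ for $r$ coprime to $p$, one gets $\widehat b_1=(\lambda_p-\eta_{E,p}(p))\widehat b_0$ and $\widehat b_{j+1}=\lambda_p\widehat b_j-p\,\widehat b_{j-1}$ for $j\ge1$, whose generating series $\sum_{j\ge0}\widehat b_jX^j$ equals $\widehat b_0\cdot\frac{1-\eta_{E,p}(p)X}{1-\lambda_p X+pX^2}$; at $X=p^{-s}$ this is $\widehat b_0$ times the $p$-Euler factor of $L_\fin(s-\tfrac12,\pi)/L^{S_\cO}(s,\eta_E)$. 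Because the operators $T_{p^2}$ for distinct $p\nmid2\disc(\cO)$ commute and $\cW(\phi)$ is a simultaneous eigenform, the function $m\mapsto a_\phi(|\Delta_E|m^2)/a_\phi(|\Delta_E|)$ is multiplicative on integers prime to $\disc(\cO)$, so multiplying these local series over $p\nmid\disc(\cO)$ gives the prime-to-$\disc(\cO)$ part of the claimed identity. The prime $p=2$ is included here, but requires Kohnen's plus-space Hecke operator in place of the naive $T_4$; I expect this to be the most delicate routine point.

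For a prime $p\mid\disc(\cO)=\disc(D)$ I would argue directly from \eqref{eq:Fourier-3}: in $a_\phi(|\Delta_E|m^2)=\sum_{c\mid m}u_{\fo(\Delta_Ec^2)}^{-1}\sum_{[I]}\#\bigl(\Emb(\fo(\Delta_Ec^2),\cO(I))_{/\sim}\bigr)\phi([I])$, every summand with $p\mid c$ vanishes, since $D_p$ is a division algebra and $\cO(I)_p$ is its unique maximal order, so no optimal embedding of the non-maximal local order $\fo(\Delta_Ec^2)_p$ into $\cO(I)_p$ exists (cf.\ \cref{lem:local_emb} and \cite[\S30.5--30.6]{Voight}). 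Hence $a_\phi(|\Delta_E|m^2)$ depends only on the prime-to-$\disc(D)$ part of $m$, so the $p$-Euler factor of the left-hand side is $\sum_{j\ge0}p^{-js}=(1-p^{-s})^{-1}$. On the other hand, since $\phi\in\cS_N^\new(\cO)$ we have $\AL(\pi_p)=1$, so $\pi_p$ is the trivial representation of $G_p$ and its Jacquet--Langlands transfer $\pi'_p$ is Steinberg, whence $L_p(s-\tfrac12,\pi)=(1-p^{-s})^{-1}$, while $L^{S_\cO}(s,\eta_E)$ has no Euler factor at $p$. Thus the bad factors match as well; assembling all primes proves the identity, and the case $a_\phi(|\Delta_E|)=0$ is dealt with by noting that the recursions above then force $a_\phi(|\Delta_E|m^2)=0$ for all $m$, so both sides vanish.

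The main obstacle, beyond bookkeeping, will be the careful handling of $p=2$ inside Kohnen's plus space and the precise normalization matching $L_\fin(s-\tfrac12,\pi)$ with the classical weight-$2$ Euler factor $(1-\lambda_p p^{-s}+p^{1-2s})^{-1}$, together with making sure the Shimura/Waldspurger eigenvalue relation genuinely applies in weight exactly $\tfrac32$; once the Hecke-equivariance of $\cW$ and the bad-prime embedding-number vanishing are in hand, the rest is the generating-function computation above.
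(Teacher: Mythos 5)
Your proposal is correct in outline but takes a genuinely different route from the paper. You derive the Euler product from the Hecke-equivariance of the classical Waldspurger lift: $\cW(\phi)$ is a simultaneous eigenform of the plus-space operators $T_{p^2}$ with eigenvalue $\lambda_p$, and the explicit action of $T_{p^2}$ on Fourier coefficients $a_\phi(n)$ gives the three-term recursion whose generating function at $X=p^{-s}$ is exactly the local factor $L_p(s-\tfrac12,\pi)/L_p(s,\eta_E)$; multiplicativity comes from commutativity of the $T_{p^2}$'s. The paper instead never invokes Hecke-equivariance of $\cW$: it works directly with the period side, rewriting $a_\phi(m^2|\Delta_E|)$ as $c(E)\sum_{d\mid m,\,(d,\disc(D))=1}\fP_{\fo(d^2\Delta_E)}(\phi)$ via \eqref{eq:Fourier-5}, then expressing $\fP_{\fo}(\phi)$ as $\fP_E(\phi)$ times a class-number ratio and a product of local period ratios $\beta_{\iota_0,E_p}(d;\phi_p)$, and finally invoking the generating-function formula of \cite[Prop.~3.4]{KP} for those local matrix coefficients together with the class number formula $h_\fo/h_E = d\,u_\fo u_E^{-1}\prod_{p\mid d}(1-(\tfrac{\Delta_E}{p})p^{-1})$. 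What your approach buys is independence from the period-ratio machinery of \cite{KP} and from the class number formula; it is the more classical Shimura/Kohnen argument, essentially the one in Bruinier--Kohnen \cite[(3.2)]{BK}, and it keeps the computation on the Fourier-coefficient side. What the paper's approach buys is staying inside the adelic framework already set up, deriving everything from \eqref{eq:Fourier-3}, \cref{lem:local_emb} and the optimal-embedding analysis, without appealing to the plus-space Hecke theory. Your treatment of $p\mid\disc(D)$ (vanishing of embedding numbers for non-maximal local orders, matching $\zeta_p(s)$ against the Steinberg $L$-factor $(1-p^{-s})^{-1}$) is exactly the observation the paper makes when it drops the condition $p\mid d$ in passing from \eqref{eq:Fourier-3} to \eqref{eq:Fourier-4} and notes $L(s-\tfrac12,\pi_p)=\zeta_p(s)$ for $p\mid\disc(D)$. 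You correctly flag $p=2$ as the point needing care: since $\cW(\phi)\in S_{3/2}^+(\disc(\cO))$ lives on $\Gamma_0(4\disc(\cO))$, the relevant Hecke operator at $2$ is Kohnen's plus-space operator rather than the naive $T_4$, and the eigenvalue relation at $2$ must be checked in that convention; the paper's route sidesteps this entirely since it treats $p=2$ like any other split prime via the local period ratio and the class number formula.
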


\begin{proof}
Let $m$ be a positive integer.
Arguing as in \cref{sec:Classical},  we compute $a_\phi(m^2|\Delta_E|)$.
An order of $E$ contained in $\fo(m^2\Delta_E)$ is of the form $\fo=\fo(d^2\Delta_E)$ for some $d \mid m$.
From \cite[Proposition 30.5.3 (b)]{Voight},  we see  for $p \mid \disc(D)$,  $\Emb(\fo_p,  \cO_p)_{/\sim}=\emptyset$ if $\fo_p\neq\fo_{E,  p}$.
If this is the case,  $\Emb(\fo,  \cO(I))_{/\sim}=\emptyset$ for any $[I]\in\Cl(\cO)$ by the argument in \cref{subsec:Action}.
Thus \eqref{eq:Fourier-3} for $n=m^2\Delta_E$ becomes
\begin{equation}\label{eq:Fourier-4}
    a_\phi(m^2\Delta_E)=\sum_{\substack{d \mid m \\ (d,  \disc(D))=1}}
    u_{\fo(d^2\Delta_E)}^{-1}
    \sum_{[I]\in\Cl(\cO)}\#\Big(\Emb(\fo(d^2\Delta_E),  \cO(I))_{/\sim}\Big)\phi([I]).
\end{equation}

Let $\fo=\fo(d^2\Delta_E)$ and suppose $\Emb(\fo,  \cO(I))_{/\sim}\neq\emptyset$ for some $[I]\in\Cl(\cO)$.
By the argument in \cref{subsec:Dependence},  we may assume $\Emb(\fo,  \cO)\neq\emptyset$ replacing $\cO$ and $\phi$ if necessary.
Fix $\iota\in\Emb(\fo,  \cO)$.  
Note that $\#\Emb(\fo_p,  \cO_p)_{/\sim}=1$ for $p \nmid \disc(D)$ by \cite[Proposition 30.5.3 (a)]{Voight}.
Together with \cref{lem:local_emb},   this indicates $\fP_{\fo}(\phi):=\fP_{\iota,  \fo}(\phi)$ is independent of $\iota$.
Fix a set of representatives  $\{[\iota_j]\}_j$ of $\Cl(\fo)$-orbits in $\coprod_{[I]\in\Cl(\cO)}\Emb(\fo,  \cO(I))_{/\sim}$.
For $[\iota_j]$,  take $[I_j]\in\Cl(\cO)$ and $g_j\in B(\cO(I_j))_\Q$ so that $[\iota_j]\in\Emb(\fo,  \cO(I_j))_{/\sim}$ and $\iota_j=g_j^{-1}\cdot\iota_0$.
We set $\fP_{\iota_j,  \fo}(\phi)=\fP_{\iota,  \fo}(R_G(g_jx_{[I_j]})\phi)$ to obtain
    \[
    \sum_{j=1}^{|T_\A\bs B(\cO)_\A/K|}\fP_{\iota_j,  \fo}(\phi)
    =\frac{1}{u_\fo}\sum_{[I]\in\Cl(\cO)}\#\Big(\Emb(\fo,  \cO(I))_{/\sim}\Big)
    \cdot\phi([I]).
    \]
The same argument as the proof of \cref{thm:Fourier} (1) shows $\sum_{j=1}^{|T_\A\bs B(\cO)_\A/K|}\fP_{\iota_j,  \fo}(\phi)=c(E)\fP_{\fo}(\phi)$,  where $c(E)$ is defined in \cref{cor:Fourier}.
Hence \eqref{eq:Fourier-3} becomes
\begin{equation}\label{eq:Fourier-5}
    a_\phi(m^2|\Delta_E|)=c(E)\sum_{\substack{d \mid m \\ (d,  \disc(D))=1}}
    \fP_{\fo(d^2\Delta_E)}(\phi).
\end{equation}

Now we assume $E$ has an optimal embedding with respect to $\cO$ replacing $\cO$ and $\phi$ if necessary.
Fix $\iota_0\in\Emb(\fo_E,  \cO)$.
We also assume $\fP_\fo(\phi)\neq0$,  where $\fo=\fo(d^2\Delta_E)$.
Then $\cP_E\not\equiv0$ on $\pi$ and hence $\fP_E(\phi)\neq0$ by \cref{prop:Fourier}.
For each $p \nmid \disc(D)$,  we fix an isomorphism of $G_p$ with $\PGL_2(\Q_p)$ which sends $K_p$ to $\PGL_2(\Z_p)$.
Let $g(d)_p$ be the element in $G_p$ corresponding to $\diag(p^{\ord_p(d)},  1)$,  where $\ord_p$ is the usual $p$-adic valuation.
Then $g(d)_p^{-1}\cdot(\iota_0\otimes id_{\Q_p})$ is in $\Emb(\fo_p,  \cO_p)$.
Hence $\fP_\fo(\phi)=h_E^{-1}h_\fo\fP_{\iota_0,  E}(R_G(g(d))\phi)$ with $g(d)=(g(d)_v)_v\in G_{\A_f}$.
Since $\phi$ is a Hecke eigenform,  it is decomposable as $\phi=\otimes_v\phi_v$ with $\phi_v\in\pi_v$.
For a finite place $v$,  set $\beta_{\iota_0,  E_v}(d; \phi_v)=\alpha_{\iota_0,  E_v}(\phi_v)^{-1}\int_{T_v}\langle \pi_v(t_vg(d)_v)\phi_v,  \phi_v \rangle_v \d t_v$. Note that $\alpha_{\iota_0,  E_v}(\phi_v)\neq0$ as we have seen in the proof of \cref{prop:Fourier}.
Then we have
    \[
    \fP_\fo(\phi)=\frac{h_E^{-1}h_\fo}{u_E^{-1}u_\fo}\fP_E(\phi)
    \prod_{p \mid d}\beta_{\iota_0,  E_p}(d; \phi_p).
    \]
According to \cite[Theorem 6.7.2]{Miyake},   the class number $h_\fo$ is given as 
    \[
    h_E^{-1}h_\fo=\frac{d}{u_Eu_\fo^{-1}}
    \prod_{p \mid d}\left(1-\left(\frac{\Delta_E}{p}\right)p^{-1}\right).
    \]
Substituting these two equations into \eqref{eq:Fourier-5} and using \cref{cor:Fourier},  we obtain at least formally
\begin{align*}
    \sum_{m=1}^\infty\frac{a_\phi(m^2|\Delta_E|)}{m^s}
    &=a_\phi(|\Delta_E|)\sum_{m=1}^\infty 
    \sum_{\substack{d \mid m \\ (d,  \disc(D))=1}}
    m^{-s}d \prod_{p \mid d}\left(1-\left(\frac{\Delta_E}{p}\right)p^{-1}\right)
    \beta_{\iota_0,  E_p}(d; \phi_p) \\
    &=a_\phi(|\Delta_E|)\sum_{k=1}^\infty k^{-s}
    \sum_{(d,  \disc(D))=1}
    d^{-s+1} \prod_{p \mid d}\left(1-\left(\frac{\Delta_E}{p}\right)p^{-1}\right)
    \beta_{\iota_0,  E_p}(d; \phi_p) \\
    &=a_\phi(|\Delta_E|)\zeta_\fin(s)
    \prod_{p \nmid \disc(D)}\left\{
    1+\left(1-\left(\frac{\Delta_E}{p}\right)p^{-1}\right)
    \sum_{r=1}^\infty \beta_{\iota_0,  E_p}(p^r; \phi_p)p^{(-s+1)r}\right\}.
\end{align*}
Here we set $m=kd$ for the second equality,  $\zeta_\fin(s)=\prod_{v<\infty}\zeta_v(s)$ is the finite part of $\zeta(s)$ and we used $\beta_{\iota_0,  E_p}(d; \phi_p)=\beta_{\iota_0,  E_p}(p^{\ord_p(d)}; \phi_p)$.
The remaining problem is to show
\begin{equation}\label{eq:Hecke-2}
    1+\left(1-\left(\frac{\Delta_E}{p}\right)p^{-1}\right)
    \sum_{r=1}^\infty \beta_{\iota_0,  E_p}(p^r; \phi_p)p^{(-s+1)r}
    =\frac{L(s-\tfrac12,  \pi_p)}{L(s,  \eta_{E_p})\zeta_p(s)}
\end{equation}
for $p \nmid \disc(D)$,  where $L(s,  \pi_p)$ and $L(s,  \eta_{E_p})=\left(1-\left(\frac{\Delta_E}{p}\right)p^{-s}\right)^{-1}$ are local factors of $L(s,  \pi)$ and $L(s,  \eta_E)$,  respectively.
The convergence of the above infinite product over $p \nmid \disc(D)$ follows once we prove this equality.
Note that we have $L(s-\frac12,  \pi_p)=\zeta_p(s)$ for $p \mid \disc(D)$.

Let $p$ be a prime such that $p \nmid \disc(D)$ and $\lambda_p\in F_\pi$ the $p$-th Hecke eigenvalue of $\phi$,  \textit{i.e.}\,$T_p\phi=\lambda_p\phi$.
Then we have $L(s-\frac12,  \pi_p)=(1-\lambda_pp^{-s}+p^{-2s+1})^{-1}$.
By \cite[Proposition 3.4]{KP},  we have
    \[
    \sum_{r=1}^\infty \beta_{\iota_0,  E_p}(p^r; \phi_p)x^r=
    \frac{\beta_{\iota_0,  E_p}(p; \phi_p) x-p^{-1}x^2}
    {1-p^{-1}\lambda_p x+p^{-1}x^2},  \quad 
    \beta_{\iota_0,  E_p}(p;\phi_p)=
    \frac{\lambda_p-1-\left(\frac{\Delta_E}{p}\right)}
    {p-\left(\frac{\Delta_E}{p}\right)}
    \]
as a formal power series.
Substituting $x=p^{-s+1}$,  we obtain \eqref{eq:Hecke-2}.
This completes the proof.
\end{proof}

\begin{prop}\label{prop:Hecke}
Let $\phi\in\cS_N^\new(\cO)$ be a Hecke eigenform and $\pi$ the irreducible cuspidal automorphic representation of $G_\A$ generated by $\phi$.
Then
    \[
    D(s,  \cW(\phi))=(2\pi)^{-s}\Gamma(s)L_\fin(2s-\tfrac12,  \pi)
    \sum_{E\in X(D)}
    \frac{c(E)\fP_E(\phi)}{L^{S_\cO}(2s,  \eta_E)|\Delta_E|^s}.
    \]
\end{prop}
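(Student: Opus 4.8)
The plan is to read off the identity by combining the rearrangement \eqref{eq:Hecke}, \cref{lem:Hecke}, and \cref{cor:Fourier}, all of which are already in hand. First I would fix $s$ in the region $\re(s)>\tfrac74$. There the Hecke bound $a_\phi(n)=O(n^{3/4})$ (the case $k=1$ of \cite[(1.13)]{Shimura}) makes the Dirichlet series $\sum_{n\ge1}a_\phi(n)n^{-s}$ of $\cW(\phi)$ absolutely convergent, which justifies both the Mellin formula $(2\pi)^s\Gamma(s)^{-1}D(s,\cW(\phi))=\sum_{n\ge1}a_\phi(n)n^{-s}$ and the regrouping of its terms according to the squarefree part of the index, namely \eqref{eq:Hecke}.

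Next, for each fixed $E\in X(D)$ I would apply \cref{lem:Hecke} with $s$ replaced by $2s$ to the inner sum $\sum_{m\ge1}a_\phi(m^2|\Delta_E|)m^{-2s}$, which turns it into $a_\phi(|\Delta_E|)\,L_\fin(2s-\tfrac12,\pi)\,L^{S_\cO}(2s,\eta_E)^{-1}$. The factor $L_\fin(2s-\tfrac12,\pi)$ is independent of $E$ and so factors out of the sum over $E$. Since $\cO$ is a maximal order we have $\level(\cO)=1$, so the hypothesis of \cref{cor:Fourier} that $E$ split or ramify at every prime dividing $\level(\cO)$ is vacuous, and $a_\phi(|\Delta_E|)=c(E)\fP_E(\phi)$ for every $E\in X(D)$. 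Substituting this and multiplying through by $(2\pi)^{-s}\Gamma(s)$ yields the asserted formula for $\re(s)>\tfrac74$; if one wants it on a larger domain one extends by analytic continuation, using that $D(s,\cW(\phi))$ is entire by the functional equation derived above.

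The computation itself is bookkeeping, so the only point that needs genuine care is convergence: one must check that in a common half-plane the Dirichlet series of $\cW(\phi)$, the Euler product $L_\fin(2s-\tfrac12,\pi)$, and the reciprocal $L^{S_\cO}(2s,\eta_E)^{-1}$ all converge absolutely, and that $L^{S_\cO}(2s,\eta_E)\ne0$ there, so that the division in \cref{lem:Hecke} is legitimate and $L_\fin(2s-\tfrac12,\pi)$ may be pulled outside the $E$-sum. All of this holds for $\re(s)>\tfrac74$: the Hecke bound forces $\fP_E(\phi)\ll|\Delta_E|^{3/4}$ (with $c(E)$ bounded since $\disc(\cO)$ is fixed), which gives absolute convergence of $\sum_{E\in X(D)}c(E)\fP_E(\phi)\,L^{S_\cO}(2s,\eta_E)^{-1}|\Delta_E|^{-s}$, while $\re(2s)>1$ takes care of the two $L$-factors. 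Beyond this, the one thing to stay vigilant about is tracking the substitution $s\mapsto 2s$ when passing between \cref{lem:Hecke} and \eqref{eq:Hecke}.
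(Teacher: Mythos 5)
Your proof is correct and takes exactly the same route as the paper: the paper's own proof consists of a single sentence declaring the result an immediate consequence of \cref{cor:Fourier}, \eqref{eq:Hecke}, and \cref{lem:Hecke}, and you have simply spelled out that combination (regroup the Dirichlet series by fundamental discriminant, apply \cref{lem:Hecke} at $2s$, pull out the $E$-independent factor $L_\fin(2s-\tfrac12,\pi)$, and invoke \cref{cor:Fourier} using $\level(\cO)=1$). The only tiny imprecision is the remark that ``$\re(2s)>1$ takes care of the two $L$-factors'': for the Dirichlet factor that is the right condition, but for $L_\fin(2s-\tfrac12,\pi)$ the relevant requirement is $\re(2s-\tfrac12)>\tfrac32$ (under the Hecke bound), which is still comfortably satisfied on $\re(s)>\tfrac74$, so nothing breaks.
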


\begin{proof}
This is an immediate consequence of \cref{cor:Fourier},  \eqref{eq:Hecke} and \cref{lem:Hecke}.
\end{proof}

\subsection{Proof of \cref{thm:sign_change}}
\label{subsec:ProofSign}

Now we are ready to prove \cref{thm:sign_change}.

\begin{proof}[Proof of \cref{thm:sign_change}]
(1) From \cite[Theorem 1.5]{SW},  $\cP_E\not\equiv0$ on $\pi$ for infinitely many $E\in X(D)$.
The assertion follows from \cref{prop:Fourier}.

(2) Suppose for a contradiction that $\{\fP_E(\phi)\}_{E\in X(D)}$ has finitely many sign changes.
We may assume $\fP_E(\phi)<0$ for only finitely many $E$ by replacing $\phi$ with $-\phi$ if necessary.
Set 
    \[
    D_\pm(s,  \cW(\phi))=(2\pi)^{-s}\Gamma(s)L_\fin(2s-\tfrac12,  \pi)
    \sum_{\substack{E\in X(D) \\ \pm\fP_E(\phi)>0}}
    \frac{c(E)\fP_E(\phi)}{L^{S_\cO}(2s,  \eta_E)|\Delta_E|^s}.
    \]
Let $\zeta^{S_\cO}(s)=\prod_{p \nmid \disc(D)}\zeta_p(s)$ be a partial Dedekind zeta function.
Then we have 
    \[
    \frac{\zeta^{S_\cO}(s)}{L^{S_\cO}(s,  \eta_E)}
    =\sum_{(d,  \disc(D))=1}\frac{b_E(d)}{d^s},   \qquad 
    b_E(d):=\sum_{k \mid d}\eta_E(k)\mu(k)=\prod_{p \mid d}(1-\eta_E(p))\geq0,
    \]
where $\eta_E$ is seen as a Dirichlet character on $\Z/\Delta_E\Z$ and $\mu$ is the M\"obius function.
Thus we get
\begin{equation}\label{eq:positive_series}
    \frac{(2\pi)^s\zeta^{S_\cO}(2s)}{\Gamma(s)L_\fin(2s-\frac12,  \pi)}
    D_+(s,  \cW(\phi))=\sum_{\substack{E\in X(D) \\ \fP_E(\phi)>0}}
    \sum_{(d,  \disc(D))=1}\frac{b_E(d)c(E)\fP_E(\phi)}{d^{2s}|\Delta_E|^s}.
\end{equation} 
From \cite[Theorem 4]{SWY},  we obtain
    \[
    \sum_{E\in X(D), \, |\Delta_E|<x}|\fP_E(\phi)| \geq 
    \left(\sum_{E\in X(D), \, |\Delta_E|<x }|\fP_E(\phi)|^2\right)^\frac12 
    \gg x^\frac34
    \]
when $x\to\infty$.
Hence
    \[
    \left|\sum_{\substack{E\in X(D) \\ \fP_E(\phi)>0}}\sum_{(d,  \disc(D))=1}
    \frac{b_E(d)c(E)\fP_E(\phi)}{d^{2s}|\Delta_E|^s}\right|
    \geq \sum_{E\in X(D), \, |\Delta_E|<x}\frac{|\fP_E(\phi)|}{|\Delta_E|^{\re(s)}}
    \gg x^{\frac34-\re(s)}
    \]
 for $\re(s)>0$.
 Since $b_E(d)\ge0$,  this indicates that the right had side of \eqref{eq:positive_series} has a pole at $s=s_0$ for some $s_0\geq\frac34$.
 On the other hand,  since $D_-(s,  \cW(\phi))$ and $D(s,  \cW(\phi))=D_+(s,  \cW(\phi))+D_-(s,  \cW(\phi))$ are holomorphic for $\re(s)>\frac34$,  so is $D_+(s,  \cW(\phi))$.
It is also known that $L_\fin(1,  \pi)\neq0$.
Combining these facts,  the right hand side of \eqref{eq:positive_series} does not have poles for $\re(s)\geq\frac34$,  which is a contradiction.

(3) Suppose that $\{\fP_E(\phi)^{(i)}\}_{E\in X(D)}$ has only finitely many sign changes.
For each $i$,  take $\delta_i\in\{\pm1\}$ so that $\delta_i\cdot\fP_E(\phi)^{(i)}<0$ for only finitely many $E$ and set $\varphi=\sum_{i=1}^{[F_\pi : \Q]}\delta_i\cdot\phi^{(i)}\in\cS_N^\new(\cO)$.
Then except for finite number of $E\in X(D)$,
    \[
    \fP_E(\varphi)=\sum_{i=1}^{[F_\pi : \Q]}|\fP_E(\phi)^{(i)}|
    \geq \frac{1}{\ds\max_{1\leq i\leq [F_\pi:\R]} |\iota_\pi(v_i)|}
    \sum_{i=1}^{[F_\pi : \Q]}\left|\iota_\pi\left(\fP_E(\phi)^{(i)}v_i\right)\right|
    \geq \frac{1}{\ds\max_{1\leq i\leq [F_\pi:\R]}|\iota_\pi(v_i)|} |\fP_E(\phi)|.
    \]

As we remarked at the beginning of this section,  each $\phi^{(i)}$ is in $\mathrm{Span}_\R\{\iota\circ\phi \mid \iota\in\Hom(F_\pi,  \R)\}$.
Thus we can write $\varphi=\sum_\iota a_\iota \cdot(\iota\circ\phi)$ with some $a_\iota\in\R$,  where $\iota$ runs over $\Hom(F_\pi,  \R)$.
Note that $\iota\circ\phi$ is a Hecke eigenform.
Let $\pi^\iota$ be the irreducible cuspidal automorphic representation of $G_\A$ generated by $\iota\circ\phi$.
From \cref{prop:Hecke},  we obtain
    \begin{equation}\label{eq:lin_comb}
    \sum_{\iota\in\Hom(F_\pi,  \R)} a_\iota 
    \frac{(2\pi)^sD(s,  \cW(\iota\circ\phi))}
    {\Gamma(s)L_\fin(2s-\tfrac12,  \pi^\iota)}=\sum_{E\in X(D)}
    \frac{c(E)\fP_E(\varphi)}{L^{S_\cO}(2s,  \eta_E)|\Delta_E|^s}.
    \end{equation}
The left hand side of \eqref{eq:lin_comb} is holomorphic for $\re(s)>\tfrac34$.
On the other hand,  the above lower estimate for $\fP_E(\varphi)$ and the same argument as in the proof of (2) show that the right hand side of \eqref{eq:lin_comb} has a pole for $\re(s)\geq\tfrac34$.
This is a contradiction.
\end{proof}


\section{Conjectures}
\label{sec:Conjectures}

Based on numerical experiments,  we formulate a conjecture on the distribution of $\{\fP_E(\phi)\}_E$ for $\phi\in\cS^\new(\cO)$.
We also discuss their relation with the central limit conjecture of \cite{CKRS}.

We fix a normalized Hecke eigenform $\phi\in\cS_N^\new(\cO)$.
Let $\pi$ be the irreducible cuspidal automorphic representation of $G_\A$ generated by $\phi$ and $\pi'=\otimes_v\pi'_v$ its Jacquet-Langlands transfer to $\PGL_2(\A)$.
Set $Y(D; \pi)=\bigcup_{\{\cE_v\}_{v\in S_\cO}}X(\{\cE_v\}_{v\in S_\cO})$,  where $\{\cE_v\}_{v\in S_\cO}$ runs over elements in $\prod_{v\in S_\cO}X_v$ such that the ramification set of $D$ coincides with $\{v\in S_\cO \mid \varepsilon(\pi'_v; \cE_v)=-1\}$.
Then $Y(D; \pi)$ is a subset of $X(D)$ and the period $\cP_E$ vanishes on $\pi$ for any $E\in X(D)\setminus Y(D; \pi)$,  according to \cref{thm:Waldspurger} (1).
By \cref{cor:optimal-3},  all but finitely many $E\in Y(D,  \pi)$ has an optimal embedding with respect to $\cO$.
We focus on the distribution of  $\{\fP_E(\phi)\}_{E\in Y(D; \pi)}$ and for that purpose,  we may ignore finitely many $E\in Y(D,  \pi)$'s which do not have optimal embeddings with respect to $\cO$.

\subsection{Symmetry conjecture}
\label{subsec:Symmetry conjecture}

Let $\fo_\pi$ denote the ring of integers of the Hecke field $F_\pi$ of $\phi$.
Recall that $\phi$ takes values in $\fo_\pi$ since it is assumed to be normalized.
For $x>0$ and $z\in\fo_\pi$,  set 
\begin{equation}\label{eq:probability}
    \bP_x[\fP(\phi)=z]
    =\frac{\#\{E\in Y(D; \pi) \mid |\Delta_E|<x,  \ \fP_E(\phi)=z\}}
    {\#\{E\in Y(D; \pi) \mid |\Delta_E|<x\}}.
\end{equation}
This is the probability that the random variable $\fP(\phi)\,\colon\{E\in Y(D; \pi) \mid |\Delta_E|<x\} \rightarrow \fo_\pi$ which sends $E$ to $\fP_E(\phi)$ takes the value $z\in\fo_\pi$.
Let $\bE_x[\fP(\phi)]$ be the first moment,  which coincides with the average of the periods $\{\fP_E(\phi)\}_E$: 
\begin{align*}
\bE_x[\fP(\phi)]&=\sum_{z\in\fo_\pi}z\cdot \bP_x[\fP(\phi)=z] \\
    &=\sum_{\substack{E\in Y(D; \pi) \\ |\Delta_E|<x}}
    \frac{\fP_E(\phi)}{\#\{E\in Y(D; \pi) \mid |\Delta_E|<x\}}.
\end{align*}

From several numerical experiments,  we predict that for any $z\in\fo_\pi$,  the probability for $\fP(\phi)=z$ is ``almost the same'' as that for $\fP(\phi)=-z$ in the following sense:

\begin{conj}\label{conj:symmetry}
For $x>0$ and a sufficiently small $\delta>0$,  
\begin{equation}\label{eq:symmetry}
    \frac12\sum_{z\in\fo_\pi}\Big|\bP_x[\fP(\phi)=z]-\bP_x[\fP(\phi)=-z]\Big| \ll 
    \frac{x^{1-\delta}}{\#\{E\in Y(D; \pi) \mid |\Delta_E|<x\}}.
\end{equation}
\end{conj}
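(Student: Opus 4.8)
The plan is to reduce \cref{conj:symmetry} to a single, clearly identifiable analytic input and to isolate where the real difficulty lies; since the statement is predicted by numerics, an unconditional proof is almost certainly out of reach, so the honest target is a clean conditional reduction. Because $\cO$ is maximal, \cref{cor:Fourier} gives $a_\phi(|\Delta_E|)=c(E)\,\fP_E(\phi)$ with $c(E)$ a positive power of $2$, so \eqref{eq:symmetry} is a statement about the sign distribution of the weight $\tfrac32$ Fourier coefficients $\{a_\phi(|\Delta_E|)\}_{E\in Y(D;\pi)}$. First I would record the elementary identity
\begin{multline*}
\frac12\sum_{z\in\fo_\pi}\Big|\bP_x[\fP(\phi)=z]-\bP_x[\fP(\phi)=-z]\Big|\\
=\frac{1}{\#\{E\in Y(D;\pi)\mid |\Delta_E|<x\}}\sum_{\substack{E\in Y(D;\pi)\\ |\Delta_E|<x}}g_x\big(\fP_E(\phi)\big),
\end{multline*}
where $g_x(z)=\sgn\big(\bP_x[\fP(\phi)=z]-\bP_x[\fP(\phi)=-z]\big)$ is an odd function with $\|g_x\|_\infty\le1$. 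Since $\#\{E\in Y(D;\pi)\mid|\Delta_E|<x\}\asymp x$, the conjecture is thus equivalent to the bound $\sum_{|\Delta_E|<x}g(\fP_E(\phi))\ll x^{1-\delta}$, uniformly over \emph{all} odd $g$ with $\|g\|_\infty\le1$. The special case $g(z)=z/\max_E|\fP_E(\phi)|$ already recovers a quantitative form of \cref{thm:sign_change}(2).

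Next I would pin down what genuinely must be proved. Combining Waldspurger's formula (\cref{thm:Waldspurger}(2)) with \cref{lem:bound_period} and the class number formula $L(1,\eta_E)\asymp h_E|\Delta_E|^{-1/2}$ yields, for $E\in Y(D;\pi)$ with $\fP_E(\phi)\neq0$,
\[
\fP_E(\phi)^2\asymp |\Delta_E|^{1/2}\,L(\tfrac12,\pi'\otimes\eta_E),
\]
the implied constants being absolute once $\pi$ is fixed (the archimedean and $S_\cO$-local factors take only finitely many values). Hence $|\fP_E(\phi)|$ is essentially the square root of a twisted central $L$-value, and all the content of \cref{conj:symmetry} is that the \emph{sign} of $\fP_E(\phi)$ equidistributes in $\{\pm1\}$ with a power-saving error, \emph{even after conditioning on the fibre} $|\fP_E(\phi)|=|z|$. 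Concretely, it would suffice to establish an asymptotic formula for $N_x(z):=\#\{E\in Y(D;\pi)\mid|\Delta_E|<x,\ \fP_E(\phi)=z\}$, uniform in $z$, whose main term is invariant under $z\mapsto-z$ and whose remainders have total mass $\ll x^{1-\delta}$. The subconvex bound \cite{BH} gives an a priori estimate $\fP_E(\phi)\ll|\Delta_E|^{1/2-\delta_0}$ for some $\delta_0>0$; applying it to every Galois conjugate (using $\fP_E(\phi)^\sigma=\fP_E(\phi^\sigma)$) confines the relevant $z$ to a box in $\fo_\pi\otimes\R$ of side $\ll x^{1/2-\delta_0}$ in each coordinate, so there are only polynomially many $z$ and a sufficiently uniform such asymptotic would close the argument.

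On the analytic side, the even moments are within reach: $\sum_{|\Delta_E|<x}|\fP_E(\phi)|^2\asymp x^{3/4}$ by \cite{SWY} (and higher even moments after Rankin--Selberg unfolding), while the smooth first moment $\sum_{|\Delta_E|<x}\fP_E(\phi)$ is $o(x^{3/4})$ by the holomorphy of $D(s,\cW(\phi))$ obtained in \cref{prop:Hecke} and used in the proof of \cref{thm:sign_change}. These inputs already force cancellation in the \emph{signed} partial sums, but they do not control $\sum_{|\Delta_E|<x}g(\fP_E(\phi))$ for a general bounded odd $g$, nor $\sum_{|\Delta_E|<x}\sgn(\fP_E(\phi))$ with an extra weight singling out a prescribed fibre. \textbf{This is the main obstacle}: a full proof would require detecting fine cancellation in the signs of square roots of twisted central $L$-values, a phenomenon predicted by random matrix theory and the central-limit heuristic of \cite{CKRS} but for which no unconditional mechanism is known --- even the single sum $\sum_{|\Delta_E|<x}\sgn(\fP_E(\phi))=o(x)$ appears to be open. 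I would therefore first aim at partial or conditional statements: the weighted first-moment analogue of \eqref{eq:symmetry} via \cref{prop:Hecke}; the analogue of \eqref{eq:symmetry} with the total-variation distance replaced by a fixed finite family of smooth odd test functions, via mollified mixed moments of $L(\tfrac12,\pi'\otimes\eta_E)$; or the full statement under GRH together with a large-sieve/random-matrix input governing the signs.
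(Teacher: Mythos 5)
The statement you were given is stated in the paper as a \emph{conjecture} (\cref{conj:symmetry}), and the paper does not prove it: Section~\ref{subsec:Symmetry conjecture} introduces it as a prediction drawn from numerical experiments, Section~\ref{subsec:Numerical} presents the supporting data, and Section~\ref{subsec:Central limit conjecture} relates it to the Bruinier--Kohnen and Conrey--Keating--Rubinstein--Snaith heuristics. So there is no argument in the paper for you to match or to diverge from. You correctly recognize this and do not claim a proof. Your preliminary reductions are sound: the total-variation identity
\[
\frac12\sum_{z\in\fo_\pi}\big|\bP_x[\fP(\phi)=z]-\bP_x[\fP(\phi)=-z]\big|
=\sup_{\substack{g\ \mathrm{odd}\\ \|g\|_\infty\le1}}\sum_{z\in\fo_\pi}g(z)\,\bP_x[\fP(\phi)=z]
\]
is a correct restatement of the left side of \eqref{eq:symmetry}, the normalization $\fP_E(\phi)^2\asymp|\Delta_E|^{1/2}L(\tfrac12,\pi'\otimes\eta_E)$ follows from \cref{thm:Waldspurger}(2), \cref{lem:bound_period} and the class number formula as you say, and your identification of the obstacle (power-saving cancellation in $\sum_{|\Delta_E|<x}\sgn(\fP_E(\phi))$ and in the fibered counts $N_x(z)$) is exactly the open analytic point that prevents the conjecture from being a theorem. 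Since the task was to compare against the paper's proof and no such proof exists, the appropriate verdict is simply that your assessment of the status of the statement is accurate.
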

Brunier and Kohnen \cite{BK} suggested that half of the Fourier coefficients $a(d)$ of a half-integral weight modular form $h$ are positive when $d$ ranges over fundamental discriminants with $a(d)\neq0$.
Combined with \cref{cor:Fourier},  \cref{conj:symmetry} refines their conjecture for $h=\cW(\phi)$.

Since $\#\{E\in Y(D; \pi) \mid |\Delta_E|<x\}\sim C\cdot x$ for some positive constant $C$,  from \eqref{eq:symmetry} we obtain
\begin{align*}
    \Big|\bE_x[\fP(\phi)]\Big|
    &=\frac12\, \Big|
    \sum_{z\in\fo_\pi}z\cdot\{\bP_x[\fP(\phi)=z]-\bP_x[\fP(\phi)=-z]\}\Big| \\
    &\ll x^{-\delta}\cdot \max\{|z| \mid z\in\fo_\pi,  \ \bP_x[\fP(\phi)=z]\neq0\}.
\end{align*}
Here,  $\bE_x[\fP(\phi)]$ and elements in $\fo_\pi$ are treated as real numbers under a fixed embedding $F_\pi \hookrightarrow \R$.

\subsection{Numerical examples}
\label{subsec:Numerical}

In what follows,  we present numerical examples for \cref{conj:symmetry} in several cases using \texttt{Magma} \cite{BCP}.
By \cref{cor:Fourier},  our computation is essentially the same as that of \cite{Hamieh},  which provides a Sage package to compute the classical Waldspurger's lift.

\example\label{ex:11.2.a.a}
Suppose $\disc(D)=\disc(\cO)=11$.
Since $\dim\cS(\cO)=1$ and $F_\cO=\Q$,  a normalized Hecke eigenform $\phi\in\cS(\cO)$ takes values in $\fo_\pi=\Z$.
Write $\Cl(\cO)=\{[I_1],  [I_2]\}$ where $I_1=\cO$.
Then we may choose $\phi$ so that $\phi([I_1])=2$ and $\phi([I_2])=-3$.
We computed $\fP_E(\phi)$ for $E\in Y(D; \pi)$ with $|\Delta_E|<10^6$. 
There are 164,511 such $E$'s and the result is shown in Figure \ref{fig:11.2.a.a}. 
The horizontal axis represents $z\in\Z$ and the vertical axis represents $\#\{E\in Y(D; \pi) \mid |\Delta_E|<10^6,  \ \fP_E(\phi)=z\}$,  the numerator of \eqref{eq:probability}.
One can find the same graph with the precise value of each point in \cite{SWY22}.
\begin{figure}[h]
\includegraphics[height=9cm, width=15cm]{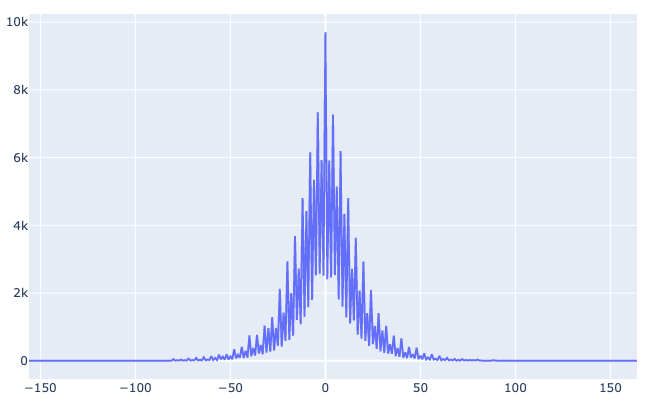}\vspace{-15pt}

\caption{}\label{fig:11.2.a.a}
\end{figure}

Table \ref{table:symmetry:11} shows the values of both sides of \eqref{eq:symmetry} in this case for $x=i\cdot10^5$,  $i=2, 4,6,8,10$.

\begin{table}[htb]\renewcommand{\arraystretch}{1.3}
  \begin{tabular}{|c|c|c|c|c|c|} \hline
$x$ & $2\cdot10^5$ & $4\cdot10^5$ & $6\cdot10^5$ & $8\cdot10^5$ & $10^6$\\ \hline
  LHS of \eqref{eq:symmetry} & 
  0.027836 & 0.019179 & 0.017544 & 0.013720 & 0.012166 \\ \hline
  \end{tabular}\vspace{3pt}

\caption{}\label{table:symmetry:11}
\end{table}

\example\label{ex:23.2.a.a}
Suppose $\disc(D)=\disc(\cO)=23$.
In this case,  $\dim\cS(\cO)=2$ and $F_\cO=\Q(\sqrt{5})$.
A normalized Hecke eigenform takes values in $\fo_\pi=\Z[\frac{1+\sqrt{5}}{2}]$. 
We realize $D$ and $\cO$ as 
\begin{align*}
D&=\Q+\Q x+\Q y+ \Q z,  \quad x^2=-23,  \, y^2=-1,  z=xy=-yx,  \\
\cO&=\Z+\Z y+\Z(\tfrac12 y+\tfrac12 z)+\Z(\tfrac12+\tfrac12 x).
\end{align*}
Then right fractional $\cO$-ideals $I_1,  I_2,  I_3$ given as
\begin{align*}
I_1=\cO,  \ I_2=2\Z+2\Z y+\Z(\tfrac12 y+\tfrac12 z)+\Z(\tfrac12-\tfrac12 x),  \\
I_3=3\Z+3\Z y+\Z(2-\tfrac32 y-\tfrac12 z)+\Z(\tfrac32-\tfrac12 x-y)
\end{align*}
form a set of representatives of $\Cl(\cO)$.
Then we may choose $\phi$ so that $\phi([I_1])=-1-\sqrt{5}$,  $\phi([I_2])=\frac{-1+\sqrt{5}}{2}$ and $\phi([I_3])=3$.
We computed $\fP_E(\phi)$ for $E\in Y(D; \pi)$ with $|\Delta_E|<10^6$,  there are 157,925 such $E$'s.
The Figure \ref{fig:23.2.a.a} is the graph of the numerator of \eqref{eq:probability}.
If we write $z=a+b\cdot\frac{1+\sqrt{5}}{2}\in\fo_\pi$ with $a, b\in\Z$,  then the horizontal axis represents $a$ and the vertical axis represents $b$.
One can find the same graph with the precise value of each point in \cite{SWY22}.
\begin{figure}[h]
\includegraphics[height=9cm, width=15cm]{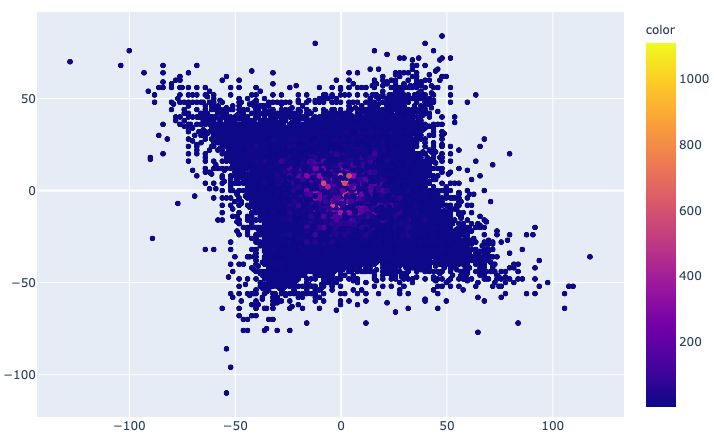}\vspace{-15pt}

\caption{}\label{fig:23.2.a.a}
\end{figure}

Table \ref{table:symmetry:23} shows the values of both sides of \eqref{eq:symmetry} in this case for $x=i\cdot10^5$,  $i=2, 4,6,8,10$.

\begin{table}[htb]\renewcommand{\arraystretch}{1.3}
  \begin{tabular}{|c|c|c|c|c|c|} \hline
$x$ & $2\cdot10^5$ & $4\cdot10^5$ & $6\cdot10^5$ & $8\cdot10^5$ & $10^6$\\ \hline
  LHS of \eqref{eq:symmetry} & 
 0.10918 & 0.093252 & 0.086587 & 0.080091 & 0.077341 \\ \hline
  \end{tabular}\vspace{3pt}

\caption{}\label{table:symmetry:23}
\end{table}

\example\label{ex:41.2.a.a}
Suppose $\disc(D)=\disc(\cO)=41$.
In this case,  $\dim\cS(\cO)=3$ and $F_\cO$ is the splitting field of $f(t)=t^3-t^2-3t+1$.
Let $\alpha$ be one of the roots of  $f(t)$. 
Then $\fo_\pi=\Z[\alpha]$.
We realize $D$ and $\cO$ as 
\begin{align*}
D&=\Q+\Q x+\Q y+ \Q z,  \quad x^2=-41,  \, y^2=-3,  z=xy=-yx,  \\
\cO&=\Z+\Z (\tfrac12+\tfrac12 y)+\Z(\tfrac12+\tfrac12 x+\tfrac16 y+\tfrac16 z)+\Z(\tfrac12-\tfrac16 y+\tfrac13 z). 
\end{align*}
Then right fractional $\cO$-ideals $I_1,  I_2,  I_3,  I_4$ given as
\begin{align*}
I_1=\cO,  \ I_2=2\Z+\Z(1-y)+\Z(\tfrac12-\tfrac12 x-\tfrac16 y-\tfrac16 z),
+\Z(1-\tfrac23 y+\tfrac13 z),  \\
I_3=3\Z+\Z(\tfrac32 -\tfrac32 y)+\Z(\tfrac52 -\tfrac12 x-\tfrac76 y-\tfrac16 z)+\Z(\tfrac12-\tfrac76 y +\tfrac13 z), \\
I_4=5\Z+\Z(\tfrac52 -\tfrac52 y)+\Z(\tfrac72 -\tfrac12 x - \tfrac76 y - \tfrac16 z)+\Z(\tfrac52 -\tfrac16 y +\tfrac13 z)
\end{align*}
form a set of representatives of $\Cl(\cO)$.
Then we may choose $\phi$ so that $\phi([I_1])=3\alpha$,  $\phi([I_2])=1-\alpha-\alpha^2$,  $\phi([I_3])=-1$ and $\phi([I_4])=\alpha^2$.
We computed $\fP_E(\phi)$ for $E\in Y(D; \pi)$ with $|\Delta_E|<5\cdot10^5$,  there are 77,035 such $E$'s.
One can find the 3D graph of $\#\{E\in Y(D; \pi) \mid |\Delta_E|<5\cdot10^5,  \ \fP_E(\phi)=z\}$ in \cite{SWY22},  where the point $z=a+b\cdot\alpha+c\cdot\alpha^2\in\fo_\pi$ is plotted as $(a,b,c)$. 

Table \ref{table:symmetry:41} shows the values of both sides of \eqref{eq:symmetry} in this case for $x=i\cdot10^5$,  $i=1, 2,3,4,5$.
\begin{table}[htb]\renewcommand{\arraystretch}{1.3}
  \begin{tabular}{|c|c|c|c|c|c|} \hline
$x$ & $10^5$ & $2\cdot10^5$ & $3\cdot10^5$ & $4\cdot10^5$ & $5\cdot10^5$\\ \hline
  LHS of \eqref{eq:symmetry} & 
 0.24973 & 0.23801 & 0.22803 & 0.22000 & 0.21215 \\ \hline
  \end{tabular}\vspace{3pt}

\caption{}\label{table:symmetry:41}
\end{table}

\subsection{Central limit conjecture}
\label{subsec:Central limit conjecture}

There are a lot of conjectures which predict that the value distribution of certain families of $L$-functions is described by random matrix theory.
They are stated in the form of a central limit theorem,  so we call them \textit{central limit conjectures}.
In \cite{CKRS},  Conrey,  Keating,  Rubinstein and Snaith combined those conjectures with well-known formulas that relate the central values of $L$-functions associated with elliptic curves with the Fourier coefficients of half-integral weight modular forms. 
As a consequence,  they formulated a central limit conjecture of the value distribution of the Fourier coefficients.

Since the periods of algebraic modular forms are the Fourier coefficients of modular forms of weight $\frac32$,  \cite[Conjecture 4.1]{CKRS} leads to the following central limit conjecture for $\fP_E(\phi)$.

\begin{conj}\label{conj:central_limit-1}
For $\alpha,  \beta\in\R\cup\{\pm\infty\}$ with $\alpha<\beta$,  
\begin{align*}
\lim_{x\to\infty}&\frac{1}{\#\{E\in Y(D; \pi) \mid |\Delta_E|<x\}} \\
&\quad\times\#\left\{E\in Y(D; \pi)  \, \middle|\, |\Delta_E|<x, \, \fP_E(\phi)\neq0 
\text{ and }
\frac{2\log|\fP_E(\phi)|-\frac12\log\left(\frac{|\Delta_E|}{\log|\Delta_E|}\right)}
{(\log\log|\Delta_E|)^\frac12}\in(\alpha,  \beta)\right\} \\
&\hspace{100pt}=\frac{1}{\sqrt{2\pi}}
\int_\alpha^\beta\exp\left(-\frac{t^2}{2}\right)\d t.
\end{align*}
Here,  $\fP_E(\phi)$ is treated as a real number under a fixed embedding $F_\pi \hookrightarrow \R$.
\end{conj}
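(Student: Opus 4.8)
The plan is to transport \cref{conj:central_limit-1} to the (conjectural) central limit law for $\log L(\tfrac12,\pi'\otimes\eta_E)$ in the symplectic family of quadratic twists of $\pi'$, and then to check that the remaining discrepancy is killed by the $(\log\log|\Delta_E|)^{1/2}$ normalization. The bridge between the two is the explicit Waldspurger formula: by \cref{cor:Fourier} one has $a_\phi(|\Delta_E|)=c(E)\fP_E(\phi)$ with $c(E)=2^{m}$, where $m$ is the number of prime factors of $\disc(\cO)=\disc(D)$ unramified in $E$, so $\log c(E)=O(1)$; and the theorem of B\"ocherer--Schulze-Pillot \cite{BSP1} (the explicit form of \cref{thm:Waldspurger}(2)) expresses $|a_\phi(|\Delta_E|)|^{2}$ as $|\Delta_E|^{1/2}\,L(\tfrac12,\pi'\otimes\eta_E)$ times a local factor. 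The first thing I would do is show that, restricted to $E\in Y(D;\pi)$, this local factor is bounded above and below by positive constants depending only on $(D,\cO,\phi)$: the bound from above is \cref{lem:bound_period}, and the bound away from $0$ uses that the archimedean local period and the finitely many local periods at $S_\cO$ do not degenerate precisely because $E\in Y(D;\pi)$ and $L(\tfrac12,\pi)\neq0$ (this is the computation already carried out in the proof of \cref{prop:Fourier}; note $Y(D;\pi)$ is a finite union of genera, over each of which the bad-prime data $\eta_E(p)$, $p\in S_\cO$, is constant). This gives, for all but finitely many $E\in Y(D;\pi)$,
\[
\fP_E(\phi)^{2}=|\Delta_E|^{1/2}\,L(\tfrac12,\pi'\otimes\eta_E)\,\kappa(E),\qquad \log\kappa(E)=O(1),
\]
and, by \cref{prop:Fourier} (valid since $\cO$ is maximal) together with \cref{thm:Waldspurger}(1), the event $\fP_E(\phi)\neq0$ coincides with $L(\tfrac12,\pi'\otimes\eta_E)\neq0$, so the restricted families appearing in \cref{conj:central_limit-1} match exactly.

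Next I would take logarithms. On the event $\fP_E(\phi)\neq0$ the displayed identity gives, uniformly for $E\in Y(D;\pi)$,
\[
2\log|\fP_E(\phi)|-\tfrac12\log\!\Big(\tfrac{|\Delta_E|}{\log|\Delta_E|}\Big)=\log L(\tfrac12,\pi'\otimes\eta_E)+\tfrac12\log\log|\Delta_E|+O(1),
\]
so after dividing by $(\log\log|\Delta_E|)^{1/2}$ the random variable in \cref{conj:central_limit-1} differs from
\[
\frac{\log L(\tfrac12,\pi'\otimes\eta_E)+\tfrac12\log\log|\Delta_E|}{(\log\log|\Delta_E|)^{1/2}}
\]
by $O\big((\log\log|\Delta_E|)^{-1/2}\big)\to0$. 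Hence the two have the same limiting distribution whenever one of them exists, and it remains to produce that distribution for the normalized $\log$-$L$-value.

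The last step is to invoke the conjectures of \cite{CKRS} (the Keating--Snaith random-matrix heuristic for the symplectic family $\{L(\tfrac12,\pi'\otimes\eta_E)\}_E$): conditionally on $L(\tfrac12,\pi'\otimes\eta_E)\neq0$, the normalized $\log$-$L$-value above converges in distribution to $\tfrac{1}{\sqrt{2\pi}}\int_\alpha^\beta e^{-t^{2}/2}\,\d t$. Restricting this to the subfamily $Y(D;\pi)$ is harmless, since $Y(D;\pi)$ is defined by finitely many splitting conditions at the places of $S_\cO$ that simultaneously pin every bad-place root number of $\pi'\otimes\eta_E$ to $+1$; within $Y(D;\pi)$ the twisted $L$-function thus has global sign $+1$, is not forced to vanish, and the family is still symplectic, so the moment heuristics underlying \cite{CKRS} go through verbatim. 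Combining this with the previous paragraph gives \cref{conj:central_limit-1}. As a sanity check, the first-moment asymptotics for $\{|\fP_E(\phi)|^{2}\}_E$ of \cite{SWY} are exactly the $k=1$ moment prediction transported through the identity of the first paragraph.

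The one genuine obstacle is this last step: a central limit theorem --- indeed even the requisite higher integral-moment asymptotics --- for $\log L(\tfrac12,\pi'\otimes\eta_E)$ over quadratic twists is well beyond present technology, so what the argument really gives is a clean conditional reduction, not a proof. Two smaller points also require care in an honest write-up: first, the uniform two-sided control of $\kappa(E)$ along $Y(D;\pi)$, i.e.\ checking that no archimedean or $S_\cO$-component in the B\"ocherer--Schulze-Pillot formula degenerates (this is where the defining signs of $Y(D;\pi)$ and the hypothesis $L(\tfrac12,\pi)\neq0$ are used); and second, that the $\log\log$-normalization is pinned down only up to lower-order terms, so the reduction is meaningful only at the level of the stated scaling, matching the analogous imprecision in \cite{CKRS}.
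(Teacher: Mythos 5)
This is a conjecture, not a theorem, so the paper offers no proof to compare against: it simply observes that \cref{cor:Fourier} ($a_\phi(|\Delta_E|)=c(E)\fP_E(\phi)$ with $\log c(E)=O(1)$, absorbed by the $(\log\log|\Delta_E|)^{1/2}$ normalization) lets one transport the Fourier-coefficient conjecture \cite[Conjecture 4.1]{CKRS} directly to the periods $\fP_E(\phi)$, and you correctly frame the whole thing as a conditional reduction rather than a proof. Your write-up re-derives that CKRS Fourier-coefficient conjecture from the underlying Keating--Snaith heuristic for $\log L(\tfrac12,\pi'\otimes\eta_E)$ via the explicit Waldspurger/B\"ocherer--Schulze-Pillot formula, rather than invoking it as a black box the way the paper does; this unpacks one more layer of the same chain and lands in the same place, and your attention to the two-sided control of the local factor $\kappa(E)$ along $Y(D;\pi)$ (using \cref{lem:bound_period}, \cref{prop:Fourier}, and the genus decomposition of $Y(D;\pi)$) is a reasonable refinement. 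Your closing caveat --- that the requisite moment asymptotics or CLT for $\log L$ over quadratic twists is well beyond current technology --- is precisely why the statement is a conjecture and not a theorem, and matches the paper's intent.
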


Further,  we obtain the following conjecture combining \cref{conj:central_limit-1} with \cref{conj:symmetry}.

\begin{conj}\label{conj:central_limit-2}
For $\alpha,  \beta\in\R\cup\{\pm\infty\}$ with $\alpha<\beta$ and a sign $\kappa\in\{\pm1\}$,  
\begin{align*}
\lim_{x\to\infty}&\frac{1}{\#\{E\in Y(D; \pi) \mid |\Delta_E|<x\}}\\
&\quad\times\#\left\{E\in Y(D; \pi)  \, \middle|\, |\Delta_E|<x, \,  \kappa\cdot\fP_E(\phi)>0 
\text{ and }
\frac{\log|\fP_E(\phi)|-\frac14\log\left(\frac{|\Delta_E|}{\log|\Delta_E|}\right)}
{(\log\log|\Delta_E|)^\frac12}\in(\alpha,  \beta)\right\} \\
&\hspace{100pt}=\frac{1}{2\sqrt{2\pi}}
\int_\alpha^\beta\exp\left(-\frac{t^2}{2}\right)\d t.
\end{align*}
Here,  $\fP_E(\phi)$ is treated as a real number under a fixed embedding $F_\pi \hookrightarrow \R$.
\end{conj}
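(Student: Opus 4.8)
The plan is to deduce \cref{conj:central_limit-2} from \cref{conj:central_limit-1} and \cref{conj:symmetry}: the former pins down the limiting law of the size $|\fP_E(\phi)|$, while the latter forces the sign of $\fP_E(\phi)$ to behave, asymptotically, like an independent fair coin, and this independence is precisely what supplies the factor $\tfrac12$ and the split over $\kappa\in\{\pm1\}$. Write $g_E:=(\log\log|\Delta_E|)^{-1/2}\bigl(\log|\fP_E(\phi)|-\tfrac14\log(|\Delta_E|/\log|\Delta_E|)\bigr)$ for the normalized logarithm occurring in \cref{conj:central_limit-2}; all counts below are over $E\in Y(D;\pi)$. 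Taking $\alpha=-\infty$, $\beta=+\infty$ in \cref{conj:central_limit-1} already gives $\#\{E\in Y(D;\pi)\mid|\Delta_E|<x,\ \fP_E(\phi)=0\}=o(x)$, so such $E$ may be discarded. Since $\#\{E\in Y(D;\pi)\mid|\Delta_E|<x\}\sim C\,x$, the estimate \eqref{eq:symmetry} reads $\sum_{z\in\fo_\pi}\bigl|\#\{|\Delta_E|<x,\ \fP_E(\phi)=z\}-\#\{|\Delta_E|<x,\ \fP_E(\phi)=-z\}\bigr|\ll x^{1-\delta}$; subtracting this at $x$ and at $x/2$ yields the same $\ell^1$ bound for the counting functions restricted to a dyadic block $x/2\le|\Delta_E|<x$. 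This dyadic form is the only input about signs I will use.

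The core step is a dyadic decomposition together with the elementary observation that, \emph{inside} a block $x/2\le|\Delta_E|<x$, the map $|\Delta_E|\mapsto g_E$ (at a fixed value of $|\fP_E(\phi)|$) varies by only $O\bigl((\log\log x)^{-1/2}\bigr)=o(1)$, since the only $|\Delta_E|$-dependence at fixed $|\fP_E(\phi)|$ comes through $(\log\log|\Delta_E|)^{-1/2}\log(|\Delta_E|/\log|\Delta_E|)$, whose total variation over the block is $O\bigl((\log\log x)^{-1/2}\bigr)$. Consequently, outside a set of ``boundary'' fields whose value of $g_E$ lies within $O\bigl((\log\log x)^{-1/2}\bigr)$ of $\alpha$ or $\beta$ — and those number $\ll x(\log\log x)^{-1/2}$ since \cref{conj:central_limit-1} furnishes the Gaussian density of $g_E$ on so thin a window — the event $\{g_E\in(\alpha,\beta)\}$ on the block is a condition on $|\fP_E(\phi)|$ alone. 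Splitting the block count by the value $z=\fP_E(\phi)$ and pairing $z$ with $-z$, the dyadic symmetry bound gives
\[
\Bigl|\#\{x/2\le|\Delta_E|<x,\ \kappa\fP_E(\phi)>0,\ g_E\in(\alpha,\beta)\}-\#\{x/2\le|\Delta_E|<x,\ {-}\kappa\fP_E(\phi)>0,\ g_E\in(\alpha,\beta)\}\Bigr|\ll x^{1-\delta}+\tfrac{x}{(\log\log x)^{1/2}},
\]
while the sum of the two counts equals $\#\{x/2\le|\Delta_E|<x,\ g_E\in(\alpha,\beta)\}+o(x)$; hence each count is $\tfrac12\#\{x/2\le|\Delta_E|<x,\ g_E\in(\alpha,\beta)\}+o(x)$ on the block. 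Summing over dyadic blocks up to $x$, the accumulated errors are $O(x^{1-\delta})+O\bigl(x(\log\log x)^{-1/2}\bigr)=o(x)$, so
\[
\frac{\#\{|\Delta_E|<x,\ \kappa\fP_E(\phi)>0,\ g_E\in(\alpha,\beta)\}}{\#\{E\in Y(D;\pi)\mid|\Delta_E|<x\}}=\tfrac12\cdot\frac{\#\{|\Delta_E|<x,\ g_E\in(\alpha,\beta)\}}{\#\{E\in Y(D;\pi)\mid|\Delta_E|<x\}}+o(1).
\]
Inserting the limit of the right-hand proportion supplied by \cref{conj:central_limit-1} and halving it finishes the computation.

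The main obstacle is simply that this is a conditional deduction: \cref{conj:central_limit-1} is a CKRS-type central limit theorem, hopelessly out of reach unconditionally, and \cref{conj:symmetry} is itself open. Within the deduction the one genuinely delicate point is passing from the single cumulative estimate \eqref{eq:symmetry} to a ``local'' symmetry statement and controlling the boundary fields: attacking $\#\{|\Delta_E|<x,\ \kappa\fP_E(\phi)>0,\ g_E\in(\alpha,\beta)\}$ directly fails because the window $\{g_E\in(\alpha,\beta)\}$ depends on $|\Delta_E|$, so the cancellation between $z$ and $-z$ is not visible; it is only the smallness of that dependence across a short dyadic range, measured against the scale $(\log\log|\Delta_E|)^{1/2}$, that lets the $\ell^1$ sign bound be applied block by block, and certifying the resulting errors as $o\bigl(\#\{E\in Y(D;\pi)\mid|\Delta_E|<x\}\bigr)$ again requires the Gaussian density from \cref{conj:central_limit-1}.
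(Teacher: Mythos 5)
The paper does not actually prove \cref{conj:central_limit-2}; it merely asserts that the statement is ``obtained combining'' \cref{conj:central_limit-1} with \cref{conj:symmetry}. Your write-up is therefore more explicit than the source, and the overall strategy is the right one: localizing the $\ell^1$ sign-symmetry estimate \eqref{eq:symmetry} to dyadic blocks by subtraction, observing that inside a block the $|\Delta_E|$-dependence of the normalized logarithm at fixed $|\fP_E(\phi)|$ has total variation $O((\log\log x)^{-1/2})$, and pairing $z\leftrightarrow -z$ in the bulk, are all sound moves.

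Two points do not hold up as written. First, the bound ``$\ll x(\log\log x)^{-1/2}$'' for boundary fields cannot come out of \cref{conj:central_limit-1}: that conjecture is a purely qualitative limit for each \emph{fixed} window and carries no rate of convergence, so it gives no control over windows whose width shrinks with $x$. What it does give is that for any fixed $\epsilon>0$, all boundary fields with $|\Delta_E|<x$ (apart from $O(1)$ with small $|\Delta_E|$) eventually lie in $\{g_E\in(\alpha-\epsilon,\alpha+\epsilon)\cup(\beta-\epsilon,\beta+\epsilon)\}$, a set whose asymptotic proportion tends to $0$ with $\epsilon$; so the boundary contribution is $o(x)$, which is enough, but the stated quantitative rate is not. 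Second, and more seriously: your last step ``inserting the limit supplied by \cref{conj:central_limit-1} and halving it finishes the computation'' is not a finished computation. The quantity normalized in \cref{conj:central_limit-1} is $\bigl(2\log|\fP_E(\phi)|-\tfrac12\log(|\Delta_E|/\log|\Delta_E|)\bigr)(\log\log|\Delta_E|)^{-1/2}$, which equals $2g_E$ in your notation, so the event $\{g_E\in(\alpha,\beta)\}$ corresponds to the $(2\alpha,2\beta)$-window there; after halving one gets $\frac{1}{2\sqrt{2\pi}}\int_{2\alpha}^{2\beta}e^{-t^2/2}\,\d t$, not $\frac{1}{2\sqrt{2\pi}}\int_{\alpha}^{\beta}e^{-t^2/2}\,\d t$. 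Your derivation as described lands on a right-hand side that disagrees with the one in the statement, and you do not flag this. (The paper's own specialization after \cref{conj:central_limit-2}, giving $\tfrac14\mathrm{erfc}(\sqrt{2}r)$, is also inconsistent with the displayed RHS, which would give $\tfrac14\mathrm{erfc}(r)$; there is evidently a normalization slip in the source, and a careful deduction should expose it rather than gloss over it.)
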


\cref{conj:central_limit-2} with $\alpha=-\infty$ and $\beta=0$ reads
\begin{align*}
\lim_{x\to\infty}&\frac{1}{\#\{E\in Y(D; \pi) \mid |\Delta_E|<x\}}\\
&\quad\times\#\left\{E\in Y(D; \pi)  \, \middle|\,  |\Delta_E|<x, \, \kappa\cdot\fP_E(\phi)>0 
\text{ and }
|\fP_E(\phi)|<\left|\frac{\Delta_E}{\log|\Delta_E|}\right|^\frac14\right\} 
=\frac14.
\end{align*}
On the other hand,  \cref{conj:central_limit-2} with $\alpha=r>0$ and $\beta=\infty$ reads
\begin{align*}
\lim_{x\to\infty}&\frac{1}{\#\{E\in Y(D; \pi) \mid |\Delta_E|<x\}}\\
&\quad\times\#\left\{E\in Y(D; \pi)  \, \middle|\,  |\Delta_E|<x, \, \kappa\cdot\fP_E(\phi)>0 
\text{ and }
|\fP_E(\phi)|>\left|\frac{\Delta_E}{\log|\Delta_E|}\right|^\frac14
\exp(r\sqrt{\log\log|\Delta_E|})\right\} \\
&\hspace{100pt} =\frac14 \mathrm{erfc}(\sqrt{2}r),
\end{align*}
where 
    \[
    \mathrm{erfc}(r)=\sqrt{\frac{2}{\pi}}\int_r^\infty 
    \exp\left(-\frac{t^2}{2}\right)\d t
    \] 
is the complementary error function.
Roughly speaking,  this observation indicates that the values of the periods $\fP_E(\phi)$ are concentrated around 0.
This matches with the numerical experiments we see in \cref{ex:11.2.a.a},  \cref{ex:23.2.a.a} and \cref{ex:41.2.a.a}.

\begin{bibdiv}
\begin{biblist}

\bib{BCP}{article}{
    author= {Bosma, Wieb}, 
    author={Cannon, John},  
    author={Playoust, Catherine},
    title={The Magma algebra system. I. The user language},
   note={Computational algebra and number theory (London, 1993)},
   journal={J. Symbolic Comput.},
   volume={24},
    year={1997},
    number={3-4},
    pages={235--265},
    issn={0747-7171},
   review={\MR{1484478}},
     doi={10.1006/jsco.1996.0125},
    url= {http://dx.doi.org/10.1006/jsco.1996.0125},
}

\bib{BH}{article}{
   author={Blomer, Valentin},
   author={Harcos, Gergely},
   title={Hybrid bounds for twisted $L$-functions},
   journal={J. Reine Angew. Math.},
   volume={621},
   date={2008},
   pages={53--79},
   issn={0075-4102},
   review={\MR{2431250}},
   doi={10.1515/CRELLE.2008.058},
}

\bib{BK}{article}{
   author={Bruinier, Jan Hendrik},
   author={Kohnen, Winfried},
   title={Sign changes of coefficients of half integral weight modular
   forms},
   conference={
      title={Modular forms on Schiermonnikoog},
   },
   book={
      publisher={Cambridge Univ. Press, Cambridge},
   },
   date={2008},
   pages={57--65},
   review={\MR{2512356}},
   doi={10.1017/CBO9780511543371.005},
}

\bib{BSP1}{article}{
   author={B\"{o}cherer, Siegfried},
   author={Schulze-Pillot, Rainer},
   title={On a theorem of Waldspurger and on Eisenstein series of Klingen type},
   journal={Math. Ann.},
   volume={288},
   date={1990},
   number={3},
   pages={361--383},
   issn={0025-5831},
   review={\MR{1079868}},
}

\if0
\bib{BSP2}{article}{
   author={B\"{o}cherer, Siegfried},
   author={Schulze-Pillot, Rainer},
   title={The Dirichlet series of Koecher and Maass and modular forms of
   weight $\frac32$},
   journal={Math. Z.},
   volume={209},
   date={1992},
   number={2},
   pages={273--287},
   issn={0025-5874},
   review={\MR{1147818}},
   doi={10.1007/BF02570834},
}
\fi

\bib{CKRS}{article}{
   author={Conrey, J. B.},
   author={Keating, J. P.},
   author={Rubinstein, M. O.},
   author={Snaith, N. C.},
   title={Random matrix theory and the Fourier coefficients of
   half-integral-weight forms},
   journal={Experiment. Math.},
   volume={15},
   date={2006},
   number={1},
   pages={67--82},
   issn={1058-6458},
   review={\MR{2229387}},
}

\if0
\bib{ELMV}{article}{
  title={Distribution of periodic torus orbits and Duke's theorem for cubic fields},
  author={Einsiedler, Manfred},
  author={Lindenstrauss, Elon},
  author={Michel, Philippe},
  author={Venkatesh, Akshay},
  journal={Annals of mathematics},
  pages={815--885},
  year={2011},
  publisher={JSTOR}
}
\fi

\if0
\bib{DH}{article}{
   author={Davenport, H.},
   author={Heilbronn, H.},
   title={On the density of discriminants of cubic fields. II},
   journal={Proc. Roy. Soc. London Ser. A},
   volume={322},
   date={1971},
   number={1551},
   pages={405--420},
   issn={0962-8444},
   review={\MR{491593}},
   doi={10.1098/rspa.1971.0075},
}
\fi

\bib{Duke}{article}{
   author={Duke, W.},
   title={Hyperbolic distribution problems and half-integral weight Maass
   forms},
   journal={Invent. Math.},
   volume={92},
   date={1988},
   number={1},
   pages={73--90},
   issn={0020-9910},
   review={\MR{931205}},
   doi={10.1007/BF01393993},
}

\bib{Goldfeld}{article}{
   author={Goldfeld, Dorian},
   title={Conjectures on elliptic curves over quadratic fields},
   conference={
      title={Number theory, Carbondale 1979},
      address={Proc. Southern Illinois Conf., Southern Illinois Univ.,
      Carbondale, Ill.},
      date={1979},
   },
   book={
      series={Lecture Notes in Math.},
      volume={751},
      publisher={Springer, Berlin},
   },
   date={1979},
   pages={108--118},
   review={\MR{564926}},
}

\bib{Gross}{article}{
   author={Gross, Benedict H.},
   title={Heights and the special values of $L$-series},
   conference={
      title={Number theory},
      address={Montreal, Que.},
      date={1985},
   },
   book={
      series={CMS Conf. Proc.},
      volume={7},
      publisher={Amer. Math. Soc., Providence, RI},
   },
   date={1987},
   pages={115--187},
   review={\MR{894322}},
}

\bib{Hamieh}{article}{
   author={Hamieh, A.},
   title={Ternary quadratic forms and half-integral weight modular forms},
   journal={LMS J. Comput. Math.},
   volume={15},
   date={2012},
   pages={418--435},
   issn={},
   review={\MR{MR3015734}},
   doi={10.1112/S1461157012001155},
}

\bib{Hartung}{article}{
   author={Hartung, P.},
   title={Proof of the existence of infinitely many imaginary quadratic fields whose class number is not divisible by 3},
   journal={J. Number Theory},
   volume={6},
   date={1974},
   pages={276--278},
   issn={},
   review={\MR{0352040}},
   doi={10.1016/0022-314X(74)90022-5},
}

\bib{HK}{article}{
   author={He, Z.},
   author={Kane,B.},
   title={Sign changes of Fourier coefficients of cusp forms of half-integral weight over split and inert primes in quadratic number fields},
   journal={Res. Number Theory},
   volume={7},
   number={1},
   date={2021},
   pages={17 pp.},
   review={\MR{4202745}},
   doi={10.1007/s40993-020-00235-9},
}

\bib{HKKL}{article}{
   author={Hulse, T.A.},
   author={Kiral,E.M.},
   author={Kuan,C.I.},
   author={Lim,L.-M.},
   title={The sign of Fourier coefficients of half-integral weight cusp forms},
   journal={Int. J. Number Theory},
   volume={8},
   date={2012},
   nomber={3},
   pages={749--762},
   review={\MR{2904928}},
   doi={10.1142/S179304211250042X},
}

\bib{IW}{article}{
   author={Inam, I.},
   author={Wiese, G.},
   title={Equidistribution of signs for modular eigenforms of half integral weight},
   journal={Arch. Math.},
   volume={101},
   date={2013},
   pages={331--339},
   review={\MR{3116654}},
   doi={10.1007/s00013-013-0566-4},
}

\bib{Kirschmer}{webpage}{
    author={Kirschmer,  },
    title={Quaternion orders with type number one},
    myurl={http://www.math.rwth-aachen.de/~Markus.Kirschmer/orders/},
}

\if0
\bib{KKP}{article}{
   author={Knopp, Marvin},
   author={Kohnen, Winfried},
   author={Pribitkin, Wladimir},
   title={On the signs of Fourier coefficients of cusp forms},
   note={Rankin memorial issues},
   journal={Ramanujan J.},
   volume={7},
   date={2003},
   number={1-3},
   pages={269--277},
   issn={1382-4090},
   review={\MR{2035806}},
   doi={10.1023/A:1026207515396},
}
\fi

\bib{KL}{article}{
   author={Kriz, Daniel},
   author={Li, Chao},
   title={Goldfeld's conjecture and congruences between Heegner points},
   journal={Forum Math. Sigma},
   volume={7},
   date={2019},
   pages={Paper No. e15, 80},
   review={\MR{3954912}},
   doi={10.1017/fms.2019.9},
}

\bib{KLW}{article}{
   author={Kohnen,W.},
   author={Lau,Y.-K.},
   author={Wu,J.},
   title={Fourier coefficients of cusp forms of half integral weight},
   journal={Math. Z.},
   volume={273},
   date={2013},
   pages={29--41},
   review={\MR{3010150}},
   doi={10.1007/s00209-012-0994-z},
}

\bib{Kohnen1}{article}{
   author={Kohnen, Winfried},
   title={Newforms of half-integral weight},
   journal={J. Reine Angew. Math.},
   volume={333},
   date={1982},
   pages={32--72},
   issn={0075-4102},
   review={\MR{0660784}},
}

\if0
\bib{Kohnen2}{article}{
   author={Kohnen, Winfried},
   title={On the proportion of quadratic character twists of $L$-functions
   attached to cusp forms not vanishing at the central point},
   journal={J. Reine Angew. Math.},
   volume={508},
   date={1999},
   pages={179--187},
   review={\MR{1676875}},
   doi={10.1515/crll.1999.508.179},
}
\fi

\bib{KP}{article}{
   author={Keaton, Rodney},
   author={Pitale, Ameya},
   title={Restrictions of Eisenstein series and Rankin-Selberg convolution},
   journal={Doc. Math.},
   volume={24},
   date={2019},
   pages={1--45},
   issn={1431-0635},
   review={\MR{3935491}},
}

\bib{LR}{article}{
   author={Lester, Stephen},
   author={Radziwill, Maksym},
   title={Signs of Fourier coefficients of half-integral weight modular
   forms},
   journal={Math. Ann.},
   volume={379},
   date={2021},
   number={3-4},
   pages={1553--1604},
   issn={0025-5831},
   review={\MR{4238273}},
   doi={10.1007/s00208-020-02123-0},
}

\bib{LRW}{article}{
   author={Lau, Y.-Y.},
   author={Royer, Emmanuel},
   author={Wu, J.},
   title={Sign of Fourier coefficients of modular forms of half-integral weight},
   journal={Mathematika},
   volume={62},
   date={2016},
   number={3},
   pages={866--883},
   review={\MR{3521358}},
   doi={10.1112/S0025579316000103},
}

\bib{Martin}{article}{
   author={Martin, Kimball},
   title={The Jacquet-Langlands correspondence, Eisenstein congruences, and
   integral $L$-values in weight 2},
   journal={Math. Res. Lett.},
   volume={24},
   date={2017},
   number={6},
   pages={1775--1795},
   issn={1073-2780},
   review={\MR{3762695}},
   doi={10.4310/MRL.2017.v24.n6.a11},
}

\bib{Martin2}{article}{
   author={Martin, Kimball},
   title={An on-average Maeda-type conjecture in the level aspect},
   journal={Proc. Amer. Math. Soc.},
   volume={149},
   date={2021},
   number={4},
   pages={1373--1386},
   issn={1073-2780},
   review={\MR{4242297}},
   doi={10.1090/proc/15328},
}

\bib{Mezroui}{article}{
   author={Mezroui, S.},
   title={The equidistribution of Fourier coefficients of half integral weight modular forms on the plane},
   journal={Czechoslovak Math. J.},
   volume={70},
   date={2020},
   number={145},
   pages={235--249},
   review={\MR{4078356}},
   doi={10.21136/CMJ.2019.0223-18},
}

\bib{Michel}{article}{
   author={Michel, P.},
   title={The subconvexity problem for Rankin-Selberg $L$-functions and equidistribution of Heegner points},
   journal={Ann. of Math.},
   volume={(2)160},
   date={2004},
   number={1},
   pages={185--236},
   review={\MR{2119720}},
   doi={10.4007/annals.2004.160.185},
}

\bib{Miyake}{book}{
   author={Miyake, Toshitsune},
   title={Modular forms},
   series={Springer Monographs in Mathematics},
   edition={Reprint of the first 1989 English edition},
   note={Translated from the 1976 Japanese original by Yoshitaka Maeda},
   publisher={Springer-Verlag, Berlin},
   date={2006},
   pages={x+335},
   isbn={978-3-540-29592-1},
   isbn={3-540-29592-5},
   review={\MR{2194815}},
}

\bib{MW}{article}{
       author = {{Martin}, Kimball},
       author = {{Wakatsuki}, Satoshi},
        title = {Mass formulas and Eisenstein congruences in higher rank},
      eprint = {1907.03417},
         year = {2019},
}

\if0
\bib{NH}{article}{
   author={Nakagawa, Jin},
   author={Horie, Kuniaki},   
   title={Elliptic curves with no rational points},
   journal={Proc. Amer. Math. Soc.},
   volume={104},
   date={1988},
   number={1},
   pages={20--24},
   issn={0002-9939},
   review={\MR{958035}},
   doi={10.2307/2047452},
}
\fi

\if0
\bib{RS}{article}{
   author={Radziwi\l\l, Maksym}, 
   author={Soundararajan, K.},
   title={Moments and distribution of central $L$-values of quadratic twists
   of elliptic curves},
   journal={Invent. Math.},
   volume={202},
   date={2015},
   number={3},
   pages={1029--1068},
   issn={0020-9910},
   review={\MR{3425386}},
   doi={10.1007/s00222-015-0582-z},
}
\fi

\bib{Schmidt}{article}{
   author={Schmidt, Ralf},
   title={Some remarks on local newforms for $\mathrm{GL}(2)$},
   journal={J. Ramanujan Math. Soc.},
   volume={17},
   date={2002},
   number={2},
   pages={115--147},
   issn={},
   review={\MR{1913897}},
}

\bib{Shimura}{article}{
   author={Shimura, Goro},
   title={On modular forms of half integral weight},
   journal={Ann. of Math. (2)},
   volume={97},
   date={1973},
   pages={440--481},
   issn={0003-486X},
   review={\MR{332663}},
   doi={10.2307/1970831},
}

\bib{SP}{article}{
   author={Schulze-Pillot, Rainer},
   title={Averages of Fourier coefficients of Siegel modular forms and representation of binary quadratic forms by quadratic forms in four variables},
   journal={Math. Ann.},
   volume={368},
   date={2017},
   number={3-4},
   pages={923--943},
   issn={},
   review={\MR{3673640}},
   doi={10.1007/s00208-016-1448-4},
}

\bib{SW}{article}{
   author={Suzuki, Miyu},
   author={Wakatsuki, Satoshi},
   title={Zeta functions and nonvanishing theorems for toric periods on $\GL_2$},
   eprint={2005.02017},
  year={2020}
}

\bib{SWY}{article}{
  title={Explicit mean value theorems for toric periods and automorphic $ L $-functions; with an appendix by authors and S. Yokoyama},
  author={Suzuki, Miyu},
  author={Wakatsuki, Satoshi},
  eprint={2103.04589},
  year={2021}
}

\bib{SWY22}{webpage}{
  author={Suzuki, Miyu},
  author={Wakatsuki, Satoshi},
  author={Yokoyama, Shun'ichi},
  title={Figures},
  myurl={http://wakatsuki.w3.kanazawa-u.ac.jp/Figures.html},
}

\bib{Taya}{article}{
   author={Taya, Hisao},
   title={Iwasawa invariants and class numbers of quadratic fields for the
   prime $3$},
   journal={Proc. Amer. Math. Soc.},
   volume={128},
   date={2000},
   number={5},
   pages={1285--1292},
   issn={0002-9939},
   review={\MR{1641133}},
   doi={10.1090/S0002-9939-99-05177-1},
}

\bib{Voight}{book}{
  title={Quaternion algebras},
  author={Voight,  John},
  booktitle={},
  pages={885+xxiii},
  year={2021},
  publisher={Springer,  Cham},
  issn={978-3-030-56692-0},
  review={\MR{4279905}},
  doi={10.1007/978-3-030-56694-4},
}

\bib{Wal1}{article}{
   author={Waldspurger, J.-L.},
   title={Sur les coefficients de Fourier des formes modulaires de poids demi-entier},
   journal={J. Math. pures et appl.},
   volume={60},
   date={1981},
   number={4},
   pages={375-484},
   review={\MR{0646366}},
}

\bib{Wal2}{article}{
   author={Waldspurger, J.-L.},
   title={Sur les valeurs de certaines fonctions $L$ automorphes en leur centre de sym\'etrie},
   journal={Compositio Math.},
   volume={54},
   date={1985},
   number={2},
   pages={173-242},
   issn={0010-437X},
   review={\MR{783511}},
}

\if0
\bib{Wood}{article}{
   author={Wood, Melanie Matchett},
   title={Asymptotics for number fields and class groups},
   conference={
      title={Directions in number theory},
   },
   book={
      series={Assoc. Women Math. Ser.},
      volume={3},
      publisher={Springer, [Cham]},
   },
   date={2016},
   pages={291--339},
   review={\MR{3596584}},
   doi={10.1007/978-3-319-30976-7\_10},
}
\fi

\end{biblist}
\end{bibdiv}

\end{document}